\newtheorem{theorem}{Theorem}[section]
\newtheorem{lemma}[theorem]{Lemma}
\newtheorem{proposition}[theorem]{Proposition}
\newtheorem{corollary}[theorem]{Corollary}
\newtheorem{conjecture}[theorem]{Conjecture}
\newcommand{\thistheoremname}{}
\newtheorem{genericthm}[theorem]{\thistheoremname}
\newenvironment{namedthm}[1]
  {\renewcommand{\thistheoremname}{#1}%
   \begin{genericthm}}
  {\end{genericthm}}
\theoremstyle{definition}
\newtheorem{example}[theorem]{Example}
\newtheorem{question}[theorem]{Question}
\theoremstyle{remark}
\newtheorem{remark}[theorem]{Remark}
\def\l@subsection{\@tocline{2}{0pt}{2.5pc}{5pc}{}} 
\numberwithin{equation}{section}
\author{Khashayar Filom}
\address{Department of Mathematics,
  University of Michigan;
 Ann Arbor, MI 48109-1043,
USA}
\email{filom@umich.edu}
\begin{document}

\begin{abstract}
The monotonicity of entropy is investigated for real quadratic rational maps on the real circle $\Bbb{R}\cup\{\infty\}$ based on the natural partition of the corresponding moduli space $\mathcal{M}_2(\Bbb{R})$ into its monotonic, covering, unimodal and bimodal regions. Utilizing the theory of  polynomial-like mappings, we  prove that the level sets of the real entropy function $h_\Bbb{R}$ are connected in the $(-+-)$-bimodal region and a portion of the unimodal region in $\mathcal{M}_2(\Bbb{R})$. Based on the numerical evidence, we conjecture that the monotonicity holds throughout the unimodal region, but we conjecture that it fails in the region of $(+-+)$-bimodal maps.
\end{abstract}

\title{Monotonicity of entropy for real quadratic rational maps}
\maketitle

\renewcommand{\baselinestretch}{0.9}\normalsize
\small
\tableofcontents
\renewcommand{\baselinestretch}{1.0}\normalsize
\normalsize

\section{Introduction}\label{S1}

The present article discusses the entropy of a real quadratic rational map $f\in\Bbb{R}(z)$ on the extended real line $\hat{\Bbb{R}}:=\Bbb{R}\cup\{\infty\}$ 
\begin{equation}\label{real entropy}
h_{\Bbb{R}}(f):=h_{\rm{top}}\left(f\restriction_{\hat{\Bbb{R}}}:\hat{\Bbb{R}}\rightarrow\hat{\Bbb{R}}\right).
\end{equation}
We investigate how this \textit{real entropy} varies in families of real quadratic rational maps. The real entropy defines a continuous function 
\begin{equation}\label{real entropy function}
h_\Bbb{R}:\mathcal{M}_2(\Bbb{R})-\mathcal{S}(\Bbb{R})\rightarrow\left[0,\log(2)\right]
\end{equation}
on the Zariski open subset $\mathcal{M}_2(\Bbb{R})-\mathcal{S}(\Bbb{R})$ of the \textit{moduli space}
$\mathcal{M}_2(\Bbb{R})$ of real quadratic rational maps, where $\mathcal{S}(\Bbb{R})$, the locus determined by real maps admitting non-trivial Möbius symmetries, is excluded so that the function $h_\Bbb{R}$ is single-valued; see \S\ref{S2.1} for details. The main question regarding this function is about the nature of its level sets called  the \textit{isentropes}:
\begin{question}\label{monotonicity formulation}
Restricted to a connected component of $\mathcal{M}_2(\Bbb{R})-\mathcal{S}(\Bbb{R})$, are the level sets of the function 
$h_{\Bbb{R}}$  connected?
\end{question}
The space $\mathcal{M}_2(\Bbb{R})-\mathcal{S}(\Bbb{R})$ has three connected components which are distinguished via the topological degree of $f$ on $\hat{\Bbb{R}}$ that can be $+2$, $0$, or $-2$.  The real entropy $h_{\Bbb{R}}$ is $\log(2)$ whenever the degree is  $\pm 2$, a situation that occurs precisely when the critical points of $f$ are complex conjugate; see Proposition \ref{Julia circle} for details. Question \ref{monotonicity formulation} is thus interesting only for the level sets in the connected component of $\mathcal{M}_2(\Bbb{R})-\mathcal{S}(\Bbb{R})$  consisting of maps with real critical points.  We call this component  the {\em component of degree zero maps} hereafter. This component  contains the set $\left\{\left\langle z^2+c\right\rangle\right\}_{-2\leq c\leq\frac{1}{4}}$ of the Möbius conjugacy classes of real quadratic polynomials with connected Julia sets as a line segment lying in the moduli space $\mathcal{M}_2(\Bbb{R})$ (which, as discussed below, could be identified with the plane $\Bbb{R}^2$). It is classically known that  the entropy function   $h_\Bbb{R}$ is monotonic on this polynomial segment \cite{MR970571,MR762431,MR1351519}.   
(See \cite{MR1239171} for more on this along with an unpublished proof by Sullivan.)
We shall prove the following generalization:  
\begin{theorem}\label{temp1}
Restricted to the part of the moduli space  $\mathcal{M}_2(\Bbb{R})$ where the critical points are real and the maps have three real fixed points, the level sets of  $h_\Bbb{R}$ are connected.
\end{theorem}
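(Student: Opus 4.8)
The plan is to reduce the connectivity of the isentropes in this region to the classical monotonicity of the entropy of the real quadratic family $\{z^{2}+c\}$, using the theory of polynomial-like mappings to realize the essential dynamics of $f$ on $\hat{\Bbb{R}}$ as a quadratic map. First I would analyze the locus cut out by the two hypotheses. With two real critical points $c_{1}<c_{2}$, the restriction $f\restriction_{\hat{\Bbb{R}}}$ is a degree-zero circle map that is monotone on each of the arcs determined by $c_{1}$ and $c_{2}$, and the requirement of three real fixed points pins down the combinatorics; I would check that this locus is precisely the union of the $(-+-)$-bimodal region with the portion of the unimodal region containing the polynomials $\langle z^{2}+c\rangle$. (Indeed $z^{2}+c$ with $c<\tfrac14$ already has three real fixed points, namely the superattracting point $\infty$ together with the two finite ones.) In every case one of the fixed points is repelling and, together with a suitable preimage, bounds a forward-invariant interval $I\subset\hat{\Bbb{R}}$ that contains exactly one critical point and carries all of the recurrent dynamics.

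I would next show that the complement of $I$ is dynamically inessential—orbits there are driven monotonically into $I$ or into the basin of an attracting or indifferent fixed point—so that $h_{\Bbb{R}}(f)=h_{\rm top}\!\left(f\restriction_{I}\right)$ and $f\restriction_{I}$ is unimodal. Thickening $I$ to a complex neighborhood, I would exhibit $f$ on it as a quadratic-like map in the sense of Douady and Hubbard and invoke the straightening theorem to produce a real quadratic polynomial $p_{c(f)}(z)=z^{2}+c(f)$, with $c(f)\in\left[-2,\tfrac14\right]$, in the same hybrid class. Since the entropy of a real unimodal map is determined by the kneading sequence of its turning point, and straightening preserves this data, the real entropy would factor through the straightening map $\chi\colon f\mapsto c(f)$ as $h_{\Bbb{R}}(f)=h_{\Bbb{R}}\!\left(p_{c(f)}\right)$.

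The monotonicity of $c\mapsto h_{\Bbb{R}}(p_{c})$ on $\left[-2,\tfrac14\right]$ is classical \cite{MR970571,MR762431,MR1351519}, so each of its level sets is a point or a closed subinterval, in particular connected. The isentropes of $h_{\Bbb{R}}$ in our region are then exactly the $\chi$-preimages of these subintervals, and it remains only to prove that $\chi$ is a continuous surjection whose fibers—the real hybrid classes—are connected arcs foliating the region. Granting this, $\chi$ is monotone in the sense of Whyburn, preimages of connected sets stay connected, and the level sets of $h_{\Bbb{R}}$ are connected.

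I expect the decisive difficulties to lie in the middle step. One must construct the nested domains realizing the quadratic-like structure uniformly as the parameters range over the entire region—in particular up to the boundary, where two fixed points collide or a fixed point turns parabolic and the invariant interval degenerates—and one must prove that straightening genuinely preserves the real entropy, i.e.\ that the second, inactive critical point and the dynamics outside $I$ contribute nothing. Establishing these analytic facts, together with the connectedness of the fibers of $\chi$, is where the real work lies; the concluding point-set argument is then routine.
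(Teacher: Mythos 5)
Your strategy is the same as the paper's (reduce to the quadratic family via a quadratic-like restriction and straightening), but the step you yourself flag as ``where the real work lies'' is a genuine gap, not a routine verification, and it is precisely the point the paper's proof turns on. Straightening maps are in general \emph{not} continuous in families, so the claim that $\chi\colon f\mapsto c(f)$ is a continuous surjection with connected fibers cannot be waved through. The paper resolves this by first proving (from the fixed point formula $\sum 1/(1-\mu_i)=1$) that three real fixed points force at least one \emph{attracting} real fixed point (Lemma \ref{attracting fixed point}) -- a fact your proposal never uses, as you organize the construction around a repelling fixed point instead -- and then parametrizing the region by the fixed-point normal form $f_{\lambda,\mu}$ with $\lambda\in(-1,1)$ the attracting multiplier. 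The control of the straightening in the family is then supplied by Uhre's theorem (Theorem \ref{Uhre}): a holomorphic motion of the connectedness locus $\mathbf{M}_0$ over the multiplier disk that respects the dynamics. Without this (or an equivalent input), your fibers-of-$\chi$ argument has no foundation; and even granting connected fibers, ``preimages of connected sets are connected'' requires $\chi$ to be a closed/quotient map, which the paper sidesteps by writing each isentrope explicitly as a union of two product-like sets meeting along the polynomial segment. Note also that the paper works with the double cover $\mathbf{M}_0\to\mathbf{M}$, $\mu\mapsto \mu/2-\mu^2/4$, on which the polynomial entropy is monotonic only on each of $[-2,1]$ and $[1,4]$ separately -- a subtlety your direct use of $c\in[-2,\tfrac14]$ hides.

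Two secondary errors. First, the locus in question is \emph{not} just the $(-+-)$-bimodal region plus part of the unimodal region: it also contains portions of the monotone increasing and $(+-+)$-bimodal regions (see Figure \ref{fig:colored} and observation \hyperref[i]{4.i}), so your assertion that $f\restriction_I$ is always unimodal fails there. Those portions carry $h_\Bbb{R}\equiv 0$ or $\log(2)$, but the $\log(2)$ isentrope then meets the escape locus, where the quadratic-like map has disconnected filled Julia set and the uniqueness of straightening (hence the well-definedness of $\chi$) breaks down; the paper handles that isentrope by a separate argument identifying it with the closure of the $h_\Bbb{R}\equiv\log(2)$ escape component. Second, you would also need to verify that the straightening can be performed symmetrically with respect to $z\mapsto\bar z$, so that the straightened polynomial is real and the conjugacy preserves $\hat{\Bbb{R}}$; the paper does this by constructing a conjugation-invariant Beltrami coefficient and normalizing the solution of the Beltrami equation to fix $0,1,\infty$.
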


The domain of the real entropy function in \eqref{real entropy function} and the region under consideration in the preceding theorem may be explained in more details: The space $\mathcal{M}_2(\Bbb{R})$ is the set of $\Bbb{R}$-points of the moduli space $\mathcal{M}_2$  of quadratic rational maps 
(which, as a variety, admits a model over $\Bbb{Q}$).
It coincides with the space 
${\rm{PGL}}_2(\Bbb{C}).\left({\rm{Rat_2}}(\Bbb{R})\right)\big/{\rm{PGL}}_2(\Bbb{C})$  
of Möbius conjugacy classes of real quadratic rational maps that itself can be identified with the plane $\Bbb{R}^2$ via a coordinate system $(\sigma_1,\sigma_2)$ defined in terms of multipliers of fixed points 
(\cite[\S10]{MR1246482}), a plane in which $\mathcal{S}(\Bbb{R})$ is a rational cuspidal curve (\cite[\S5]{MR1246482}). 
A real quadratic map $f\in\Bbb{R}(z)$ either has complex conjugate critical points and thus induces a two-sheeted covering 
$f\restriction_{\hat{\Bbb{R}}}:\hat{\Bbb{R}}\rightarrow\hat{\Bbb{R}}$,
 or is with real critical points in which case the circle map above is of degree zero and restricts to an interval map 
$f\restriction_{f(\hat{\Bbb{R}})}:f(\hat{\Bbb{R}})\rightarrow f(\hat{\Bbb{R}})$ 
for which 
\begin{equation}\label{real entropy 1}
h_{\Bbb{R}}(f)=h_{\rm{top}}\left(f\restriction_{\hat{\Bbb{R}}}:\hat{\Bbb{R}}\rightarrow\hat{\Bbb{R}}\right)
=h_{\rm{top}}\left(f\restriction_{f(\hat{\Bbb{R}})}:f(\hat{\Bbb{R}})\rightarrow f(\hat{\Bbb{R}})\right).
\end{equation}
This dichotomy will be established in Proposition \ref{Julia circle}. Then conditioning on the shape (i.e. whether the graph of $f$ starts with an increase or with a decrease) and the modality of these interval maps  results in a natural partition of $\mathcal{M}_2(\Bbb{R})\cong\Bbb{R}^2$ into 
 \textit{degree} $\pm2$, \textit{increasing}, \textit{decreasing}, \textit{unimodal}, $(-+-)$-\textit{bimodal}
and $(+-+)$-\textit{bimodal} regions \cite{MR1246482}; see Figure \ref{fig:main} adapted from Milnor's paper  (or the more zoomed-out version in Figure \ref{fig:colored}) for this partition. Of course, outside the latter three, $h_\Bbb{R}$ is either identically zero or identically $\log(2)$; Question \ref{monotonicity formulation}  is thus interesting only in unimodal and bimodal regions of the moduli space. The region mentioned in Theorem \ref{temp1} is colored in green in Figure \ref{fig:colored} and, aside from a tiny portion of the monotone increasing region, consists of the whole $(-+-)$-bimodal region along with parts of unimodal and $(+-+)$-bimodal regions that lie strictly below a certain line in the plane $\mathcal{M}_2(\Bbb{R})\cong\Bbb{R}^2$. This description will be established in \S\ref{S4} after a careful discussion on the number of real fixed points. The proof of Theorem \ref{temp1} then relies on the Douady and Hubbard theory of \textit{polynomial-like mappings} from \cite{MR816367}, a technical result from \cite{MRUhre} invoked to control the straightening in a family and finally, the monotonicity of entropy for real quadratic polynomials as established in \cite{MR970571,MR762431,MR1351519}. 

Theorem \ref{temp1}  implies that restrictions of $h_\Bbb{R}$ to the  $(-+-)$-bimodal region or to the part of the unimodal region where  maps have three real fixed points both have connected level sets; see Corollary \ref{monotonicity 1'}. 
An immediate question then arises: Are isentropes connected in the entirety of the unimodal region (i.e. the middle rectangular part of Figure \ref{fig:colored} comprising of both green and yellow parts)? Supported by the numerically generated entropy plots in \S\ref{S5}, we pose the following conjecture: 
\begin{conjecture}\label{monotonicity 2}
The level sets of the restriction of  $h_\Bbb{R}$ to the unimodal region of the moduli space
$\mathcal{M}_2(\Bbb{R})$ are connected.
\end{conjecture}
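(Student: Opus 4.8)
The plan is to attack Conjecture \ref{monotonicity 2} by pushing the polynomial-like machinery behind Theorem \ref{temp1} from the part of the unimodal region carrying three real fixed points (already handled via Corollary \ref{monotonicity 1'}) to the whole unimodal rectangle in Figure \ref{fig:colored}. By Proposition \ref{Julia circle} every map in the unimodal region restricts to a genuine interval map with a single turning point, so the first step is to isolate, for each such $f$, a quadratic-like restriction $f\colon U'\to U$ around the real critical point and apply the Douady--Hubbard straightening theorem from \cite{MR816367} to produce a real-symmetric hybrid equivalence to a quadratic polynomial $z^2+c(f)$ with $c(f)\in\left[-2,\tfrac{1}{4}\right]$. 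Since the topological entropy of a unimodal map is read off from the kneading data of the critical orbit, and that data is preserved by a real-symmetric hybrid equivalence, one expects the pointwise identity $h_\Bbb{R}(f)=g\bigl(c(f)\bigr)$, where $g\colon\left[-2,\tfrac{1}{4}\right]\to[0,\log 2]$ is the monotone entropy function of the real quadratic family established in \cite{MR970571,MR762431,MR1351519}.

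The second step is to promote this pointwise correspondence to a statement about the family. Using the control on straightening in analytic families from \cite{MRUhre}, I would show that $f\mapsto c(f)$ is a continuous, monotone surjection of the unimodal region onto $\left[-2,\tfrac{1}{4}\right]$, in the sense that the preimage of every subinterval is connected. Granting this, each isentrope satisfies $h_\Bbb{R}^{-1}(s)=c^{-1}\bigl(g^{-1}(s)\bigr)$; because $g$ is monotone, $g^{-1}(s)$ is a (possibly degenerate) closed subinterval of $\left[-2,\tfrac{1}{4}\right]$, and its preimage under a monotone map is connected. This is exactly the mechanism that proves Theorem \ref{temp1} on the three-real-fixed-point locus, so reproducing it verbatim on the complementary portion of the unimodal region would settle the conjecture.

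The hard part is controlling the straightening across the fixed-point bifurcation curve that separates the two portions of the unimodal region. Along the line appearing in the description of the region of Theorem \ref{temp1} a pair of real fixed points collides into a parabolic fixed point of multiplier $+1$ and then departs the real circle as a complex-conjugate pair; in the three-real-fixed-point regime one of these fixed points typically serves as a repelling endpoint organizing the invariant interval on which the quadratic-like restriction $f\colon U'\to U$ is built. I expect the main obstacle to be proving that a quadratic-like restriction persists and straightens continuously \emph{through} this parabolic locus, equivalently that $c(f)$ extends continuously with connected fibers past the curve where Theorem \ref{temp1} ceases to apply, since the hypotheses of \cite{MRUhre} are most naturally verified where the bounding dynamics is hyperbolic rather than parabolic.

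A secondary difficulty, which must be settled to even state $h_\Bbb{R}(f)=g\bigl(c(f)\bigr)$ on the uncovered region, is to rule out extra entropy produced by the interval dynamics outside the quadratic-like core once the fixed-point configuration changes; one needs the complement of the core to carry zero entropy so that all of $h_\Bbb{R}(f)$ is concentrated in the straightened polynomial. The numerically generated isentrope plots of \S\ref{S5} are fully consistent with both the persistence of the quadratic-like structure and the concentration of entropy in the core, which is what motivates the conjecture, but neither ingredient is established by the present methods, and it is precisely the passage through the parabolic line that keeps the statement out of reach of the argument proving Theorem \ref{temp1}.
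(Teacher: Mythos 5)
The statement you are addressing is Conjecture \ref{monotonicity 2}, and the paper does not prove it: it offers only the numerical evidence of \S\ref{S5} and, in \S\ref{S6.2}, a \emph{conditional} reduction (Proposition \ref{conjectures}) showing that the conjecture would follow from the No Bone-Loop Conjecture (or its weak form). That reduction is combinatorial in the spirit of Milnor--Tresser: one argues that the kneading data of the distinguished critical point, and hence $h_\Bbb{R}$, can change only upon crossing a bone $f^{\circ n}(c_1)=c_1$; that every bone-arc joins the polynomial line $\sigma_1=2$ to the line $\sigma_1=-6$ (by Thurston rigidity and Proposition \ref{escape boundary}); and that consequently each isentrope deformation retracts onto a connected subinterval of the polynomial line, provided no bone-loops interfere. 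Your proposal instead tries to push the polynomial-like/straightening route of \S\ref{S6.1} across ${\rm{Per}}_1(1)$, which is a genuinely different strategy and not the one the paper pursues for this conjecture.

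The gap you identify is real and, with the paper's methods, fatal rather than merely technical. The quadratic-like restriction in Theorem \ref{straightening} is not built ``around the real critical point'': it is the restriction of $f$ to the complement of $f^{-1}(N)$, where $N$ is carved out of the immediate basin of a real \emph{attracting} fixed point via the Koenigs linearization. By observation \hyperref[f]{4.f} and Lemma \ref{attracting fixed point}, the guaranteed attracting fixed point exists exactly below ${\rm{Per}}_1(1)$; in the part of the unimodal region above that line there is a single real fixed point which, outside the $h_\Bbb{R}\equiv 0$ escape component, becomes repelling past ${\rm{Per}}_2(1)$, so the construction has no starting point, and Theorem \ref{Uhre} --- stated for the family $f_{\lambda,\mu}$ with attracting multiplier $\lambda\in\Bbb{D}$ --- does not apply there. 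Thus your first step already fails on the uncovered region; the difficulty is not only the continuity of $c(f)$ through the parabolic locus. Your plan is an accurate account of why Theorem \ref{temp1} stops where it does, but to make progress on the conjecture itself you would need either a substitute for the attracting fixed point (none is available in this regime) or the paper's bone-theoretic route, which trades the analytic obstruction for the open topological problem of excluding bone-loops.
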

In \S\ref{S6.2}, we elaborate more on this conjecture and formulate other more tractable conjectures implying it; see Proposition \ref{conjectures}. The techniques are of a different flavor and are reminiscent of the treatment of the monotonicity problem for cubic polynomials in \cite{MR1351522,MR1736945}. Conjecture \ref{monotonicity 2} is discussed in  \cite{2020arXiv200903797G} by invoking a machinery developed in \cite{MR4115082}, and also  in \cite{2020arXiv200910147B} with a different approach.
\begin{figure}[ht!]
\center
\includegraphics[width=15cm, height=8.1cm]{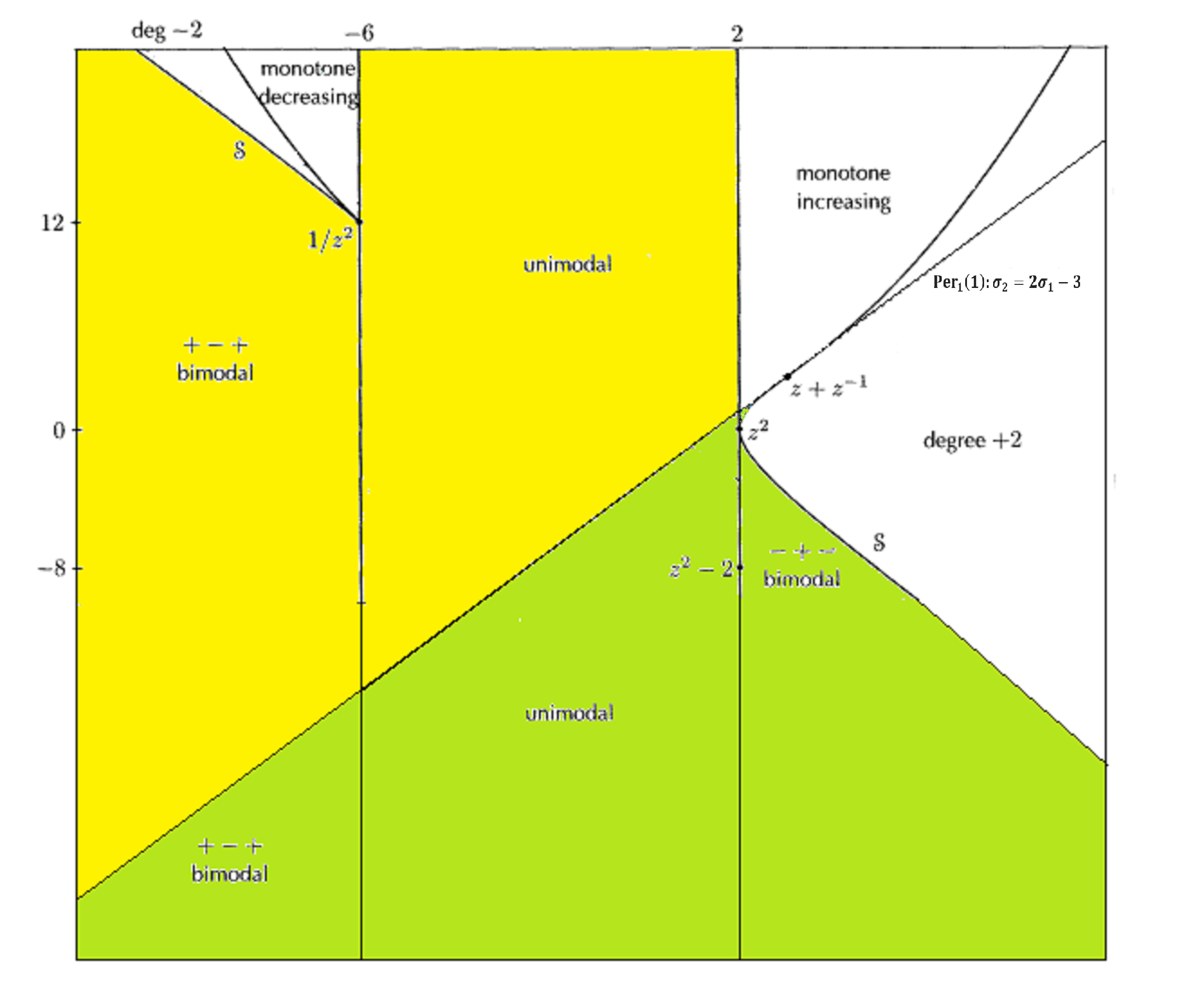}
\caption{A picture of the  real moduli space $\mathcal{M}_2(\Bbb{R})$ in the $(\sigma_1,\sigma_2)$-coordinate system 
(cf. \S\ref{S2.1}) and its division into seven regions based on the topological type of the dynamics induced on $\hat{\Bbb{R}}$ adapted from \cite{MR1246482} with slight changes (cf. Figure \ref{fig:main}). One has $h_\Bbb{R}\equiv 0$ over the monotonic regions and $h_\Bbb{R}\equiv\log(2)$ over the degree $\pm2$ regions. The critical points are not real for maps from the latter two. The line separating yellow and green parts is the line ${\rm{Per}}_1(1)$ where a real fixed point becomes multiple.  Throughout the green part strictly below this line the maps have three real fixed points and the level sets of the restriction of $h_\Bbb{R}$ are connected; see Theorem \ref{temp1} or its reformulation as Theorem \ref{monotonicity 1}.}
\label{fig:colored}
\end{figure}

The empirical data in \S\ref{S5} moreover suggest that the monotonicity fails in the $(+-+)$-bimodal region of the moduli space.
\begin{conjecture}\label{temp2}
The level sets of the restriction of  $h_\Bbb{R}$ to the $(+-+)$-bimodal region of the moduli space
$\mathcal{M}_2(\Bbb{R})$ can be disconnected.
\end{conjecture}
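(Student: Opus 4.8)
The plan is to exhibit a single value $h_0\in(0,\log 2)$ whose preimage under $h_\Bbb{R}$, intersected with the open $(+-+)$-bimodal region, fails to be connected; a lone such example suffices for the stated ``can be disconnected''. The cleanest route is Morse-theoretic: I would try to locate two distinct parameters $p_1,p_2$ in the region at which $h_\Bbb{R}$ attains strict local maxima $M_1,M_2$, separated by a saddle whose critical value $s$ satisfies $s<\min(M_1,M_2)$. For any $h_0\in(s,\min(M_1,M_2))$ the super-level set $\{h_\Bbb{R}\ge h_0\}$ then splits into two components, one about each $p_i$, and the boundary level set $\{h_\Bbb{R}=h_0\}$ is correspondingly disconnected. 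Thus the whole problem reduces to producing a genuinely two-peaked entropy landscape on this region. That one should expect such behavior here, in contrast to the connected isentropes available for cubic polynomials, is precisely the point: the two turning branches of a $(+-+)$ map both increase toward the shared point at infinity on the circle $\hat{\Bbb{R}}$, so the combinatorics differ from the interval cubic case and need not force monotonicity.

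To evaluate $h_\Bbb{R}$ in this setting I would use the kneading theory of bimodal maps. Restricting $f$ to the invariant interval $f(\hat{\Bbb{R}})$ as in \eqref{real entropy 1}, one records the two kneading sequences of the critical values and uses the Milnor--Thurston relation $h_\Bbb{R}=\log\lambda$, where $\lambda^{-1}\in(0,1)$ is the smallest positive zero of the associated kneading determinant $D(t)$. Parametrizing the region by the pair of critical values (equivalently, by the restriction of the coordinates $(\sigma_1,\sigma_2)$ to this region), I would first scan numerically for the two-peak pattern, then pin down candidate points $p_1,p_2$ together with a separating arc $\gamma$ joining them along which the saddle is to be certified.

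The two lower bounds $h_\Bbb{R}(p_i)\ge M_i'$ with each $M_i'$ slightly exceeding $h_0$ are the tractable half: at each $p_i$ one extracts a finite piece of the itineraries, builds an explicit horseshoe or Markov subgraph, and reads off a rigorous lower bound from the growth rate of the transition matrix, upgraded to an honest bound on $h_\Bbb{R}$ by interval arithmetic on the partition endpoints. The hard part is the upper bound along $\gamma$: one must certify $h_\Bbb{R}<h_0$ at \emph{every} point of the arc, and upper bounds on topological entropy require controlling all orbits rather than a finite subsystem. Here I would bound the lap-number growth $\ell(f^n)$ in the sense of Misiurewicz--Szlenk, or equivalently bound the smallest zero of $D(t)$ from below via rigorous interval estimates on the kneading determinant. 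The combinatorial subtlety is that the kneading data is \emph{not} constant along $\gamma$, so the argument must enumerate the finitely many combinatorial cells the arc crosses and verify the entropy estimate cell by cell. Establishing such a uniform upper bound along a curve---rather than at an isolated post-critically finite parameter where $D(t)$ degenerates to a polynomial and the estimate is purely algebraic---is where I expect the real difficulty to lie, and it is presumably the reason the statement is offered as a conjecture rather than a theorem.
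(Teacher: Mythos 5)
This statement is a \emph{conjecture} in the paper: no proof is given. What the paper offers as support is the numerically generated entropy contour plot for the two-parameter family $\mathcal{F}_3$ of $(+-+)$-bimodal maps (Figures \ref{fig:plot4} and \ref{fig:plot5}, computed with the algorithm of \cite{MR1151977}), together with the heuristic discussion in \S\ref{S7}: in this region all three fixed points are repelling, so the straightening machinery of \S\ref{S6.1} is unavailable, and the ``forking'' of hyperbolic channels visible in the bifurcation diagrams suggests entropy can increase in several directions at once. A rigorous treatment is explicitly deferred to \cite{1930-5311_2020_16_225}. So your proposal should be judged as a strategy for settling an open conjecture, not against an existing proof.

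As a strategy it is reasonable and consistent with the paper's evidence: a two-peaks-separated-by-a-saddle landscape is exactly the kind of configuration the contour plot in Figure \ref{fig:plot4} hints at, and your division of labor --- rigorous lower bounds at the peaks via explicit horseshoes or Markov subgraphs, a uniform upper bound along a separating arc via lap-number growth or the kneading determinant --- is the standard way one would try to certify it. (Working in the $(\mu,t)$-plane is legitimate here: the paper notes that $F$ is injective on the relevant part of the third quadrant inside the parabola, so disconnectedness there transfers to the moduli space.) But the proposal contains a genuine, and decisive, gap which you yourself flag: the uniform upper bound $h_\Bbb{R}<h_0$ at \emph{every} point of the arc $\gamma$. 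Upper bounds on topological entropy require controlling all orbits, the kneading data varies along $\gamma$ through infinitely many combinatorial types in general (not ``finitely many cells'' unless you can first prove the arc avoids accumulation of bones or non-hyperbolic parameters), and the interval maps here are not boundary-anchored, so the endpoint itineraries enter the admissibility conditions and the kneading determinant in a way that has no off-the-shelf treatment. The paper's own discussion in \S\ref{S7} of ``high modality versus low entropy'' and the absence of a stunted-sawtooth model for $(+-+)$-bimodal maps is precisely an obstruction to the cell-by-cell verification you propose. Until that step is carried out, the argument establishes nothing beyond what the numerics already suggest; this is exactly why the statement remains a conjecture in the paper.
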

Borrowing the terminology used in \cite{MR1806289}, the maps in Theorem \ref{temp1} exhibit a ``polynomial-like behavior'':
once a real quadratic rational map has three real fixed points, at least one of them must be attracting (Lemma \ref{attracting fixed point}), and  outside the basin of that fixed point the map can be quasi-conformally perturbed to a real polynomial of the same real entropy (Theorem \ref{straightening}). In contrast,  Conjecture \ref{temp2} is about an ``essentially non-polynomial behavior'' where all three fixed points are repelling: Throughout the yellow part of the $(+-+)$-bimodal region in Figure \ref{fig:colored} (which is missing from Theorem \ref{temp1}), all fixed points (real or complex) are repelling. This ``non-polynomial'' attribution necessitates completely different techniques for investigating Conjecture \ref{temp2} as demonstrated in \cite{1930-5311_2020_16_225}.

\textbf{Motivation.}
A complex rational map 
$f:\hat{\Bbb{C}}\rightarrow\hat{\Bbb{C}}$ of degree $d\geq 2$ is of topological entropy $\log(d)$ and admits a unique  measure of maximal entropy $\mu_f$ whose support is the Julia set $\mathcal{J}(f)$ \cite{MR736568,MR736567}. By contrast, when $f$ is with real coefficients, the circle $\hat{\Bbb{R}}:=\Bbb{R}\cup\{\infty\}$ is invariant under $f$ and 
the entropy of the restriction 
$$h_{\Bbb{R}}(f)=h_{\rm{top}}\left(f\restriction_{\hat{\Bbb{R}}}:\hat{\Bbb{R}}\rightarrow\hat{\Bbb{R}}\right)$$ 
can take all values in $\left[0,\log(d)\right]$. The behavior of this real entropy as $f$ varies in families is then worthy of study.\\
\indent
There is an extensive literature on Milnor's conjecture on the \textit{monotonicity of entropy} for families of real polynomials \cite{MR3289915}. The first result of this type, appeared in  \cite{MR970571}, is due to Milnor and Thurston
where they prove that in the quadratic family 
\begin{equation}\label{quadratic family}
\left\{x\mapsto \frac{x^2-a}{2}
:[-(1+\sqrt{1+a}),1+\sqrt{1+a}]\rightarrow [-(1+\sqrt{1+a}),1+\sqrt{1+a}]\right\}_{-1\leq a\leq 8}
\end{equation}
 the entropy increases with the parameter $a$ (also see \cite{MR762431,MR1351519}).  
The proof in \cite{MR970571} has two ingredients: the \textit{kneading theory} -- a combinatorial tool for studying piecewise-monotone mappings (developed in the same paper) -- and a special case of the \textit{Thurston rigidity for  post-critically finite polynomials}.  For polynomial interval maps of higher degrees,  the parameter space is not one-dimensional and so the monotonicity of a function on it has to be interpreted as the connectedness of its level sets. The next development was the case of bimodal cubic polynomial maps 
\cite{MR1351522,MR1736945}. The proof relies on delicate planar topology arguments and analyzing certain real curves, called \textit{bones},  in the parameter space  that are defined by imposing a periodic condition on a critical point.  Given the analogy between quadratic rational maps and cubic polynomials, it is no surprise that similar ideas come up in our treatment of the monotonicity problem for real quadratic rational maps; see \S\ref{S6.2}. \\
\indent
The monotonicity conjecture on the connectedness of isentropes for polynomial maps  of an arbitrary degree $d$
was established in  \cite{MR3264762} by  Bruin and van Strien (and also in \cite{MR3999686} by a different method). It must be mentioned that in their work, as well as the aforementioned works of Milnor and Thurston  \cite{MR970571} or Milnor and Tresser \cite{MR1736945}, one deals with degree $d$ polynomials with real non-degenerate critical points that restrict to boundary-anchored interval maps of modality $d-1$. These properties are all apparent in the family \eqref{quadratic family} where $d=2$. This is in contrast with our context since, given a real quadratic rational map $f$ with real critical points, the interval map 
$f\restriction_{f(\hat{\Bbb{R}})}:f(\hat{\Bbb{R}})\rightarrow f(\hat{\Bbb{R}})$ is not boundary-anchored -- a fact that makes its kneading theory more complicated -- and may be bimodal as well; although, unlike the case of cubic polynomial interval maps, its entropy is at most $\log(2)$, strictly less than the maximum entropy that a bimodal map can realize which is $\log(3)$. A key idea in both \cite{MR3264762} and \cite{MR1736945} is to associate with the polynomial interval map under consideration a piecewise-linear ``combinatorial model'' from an appropriate family of \textit{stunted sawtooth maps} that has the same kneading data; and moreover, the connectedness of isentropes is easier to verify for the stunted sawtooth family. Our treatment lacks such a model due to the aforementioned difficulties; cf. the discussion in \S\ref{S7}.  \\
\indent 
The first step to pose a question about the entropy behavior in a family is to parametrize the space of maps under consideration.  In the usual setting of polynomial interval maps of the highest possible modality (e.g. treatments of the monotonicity conjecture in \cite{MR970571,MR1351522,MR1736945,MR3264762}) 
one can rely on the fact that such interval maps are uniquely determined up to an affine conjugacy by their critical values \cite[Theorem 3.2]{MR1736945}. But for real rational maps, the natural space to work with is the locus determined by real maps in the space $\mathcal{M}_d(\Bbb{C}):={\rm{Rat}}_d(\Bbb{C})/{\rm{PSL}}_2(\Bbb{C})$
of degree $d$ holomorphic dynamical systems on the Riemann sphere modulo the conformal conjugacy.
There is no analogous parametrization of this moduli  space in terms of critical values. As a matter of fact, we shall encounter families of interval maps whose dimension is strictly larger than the modality of maps in the family, e.g. two-parameter families of unimodal interval maps; see Proposition \ref{unimodal-bimodal}. To the best of our knowledge, the monotonicity problem for such families has not been fully investigated, at least not to the extent of the research on polynomial maps of the highest possible modality in the aforementioned references on the monotonicity problem. Nevertheless, \cite{MR3289915, MR3264762} 
allude to a monotonicity result for a two-parameter family of boundary-anchored unimodal maps given by real quartic polynomials with a pair of non-real critical points.
\\
\indent 
In the broader context of real rational maps, the research on the monotonicity question as well as other questions concerning the isentropes is still in its early stages. The current article, along with \cite{filom2021real}, can be considered as the first attempts toward formulating the monotonicity problem in this context and addressing the difficulties that arise when one passes from polynomial maps to rational maps.

\textbf{Outline.}
In the present paper, we solely focus on the case of $d=2$ where $\mathcal{M}_2(\Bbb{C})$ can be famously identified with $\Bbb{C}^2$, with the subspace $\mathcal{M}_2(\Bbb{R})$ of classes with real representatives being  the underlying real plane $\Bbb{R}^2$.
We have devoted \S\ref{S2} to the background material on spaces $\mathcal{M}_2(\Bbb{C})$, $\mathcal{M}_2(\Bbb{R})$ and to setting up  the real entropy function $h_\Bbb{R}$ that will put Question \ref{monotonicity formulation} on a firm footing. \\
\indent
In \S\ref{S3}, we concentrate on the dense open subset of classes of \textit{hyperbolic} maps in $\mathcal{M}_2(\Bbb{R})$ where $h_\Bbb{R}$ is locally constant; see Theorem \ref{entropy constant over hyperbolic}. We will completely describe the real dynamics in a prominent hyperbolic component namely, the \textit{escape locus}; see Proposition \ref{escape locus}.     \\
\indent
Generating entropy plots in the moduli space $\mathcal{M}_2(\Bbb{R})$ poses a practical difficulty as one needs to have parametrized families of real quadratic rational maps in hand before applying any algorithm for calculating entropy. To this end, a  real parameter space will be introduced in \S\ref{S4} which is more convenient for computer implementations and admits a 
finite-to-one map into 
$\mathcal{M}_2(\Bbb{R})$; cf. \eqref{the transformation}. Several observations will be made in \S\ref{S4} that will result in a better understanding of the dynamics and in excluding certain parts of the parameter space that are inconsequential to Question \ref{monotonicity formulation} in the sense that in those regions the function $h_\Bbb{R}$ is identically $0$ or $\log(2)$ and the induced dynamics on the real circle can be explicitly described. These observations lead to two important conclusions:
\begin{itemize}
\item The number of real fixed points plays an important role in the analysis of the real dynamics; see Lemma \ref{attracting fixed point} and observation \hyperref[f]{4.f}. 
\item The $(-+-)$-bimodal situation can be reduced to the unimodal one by observation \hyperref[j]{4.j}. Therefore, the study of the induced dynamics on $\hat{\Bbb{R}}$ essentially reduces to studying certain two-parameter families of unimodal and  $(+-+)$-bimodal interval maps; see Proposition \ref{unimodal-bimodal}.
\end{itemize}

In \S\ref{S5}, we first numerically generate contour plots for the entropy function in this new parameter space and then project them into the moduli space. 
As mentioned above, we only need to deal with two-parameter families of $(+-+)$-bimodal and unimodal interval maps.  The entropy plots obtained in \S\ref{S5} suggest that the monotonicity fails for the former while it holds for the latter. The first observation substantiates  Conjecture \ref{temp2}. We will elaborate a bit more on this conjecture in \S\ref{S7}. The second one will be partially established in \S\ref{S6} as the proof of Theorem \ref{temp1} by invoking the straightening theorem where a  real attracting fixed point is present, i.e. the $(-+-)$-bimodal region and a certain part of unimodal region. The monotonicity in the whole unimodal region is the content of Conjecture \ref{monotonicity 2} whose analysis is closely related to certain post-critical curves in that region. Imitating the ideas developed in \cite{MR1351522,MR1736945}, in \S\ref{S6.2} we argue that this conjecture is implied by other conjectures on the nature of these curves and the kneading theory of maps that they pass through. \\
\indent 
The moduli space of rational maps with only two critical points bears a significant resemblance to the moduli space of quadratic rational maps \cite{MR1806289}. Hence many of these ideas can potentially be developed 
in that context as well. This will be briefly discussed in \S\ref{S8}.


\textbf{Notation and terminology.} 
As for notations, $\hat{\Bbb{C}}$ and $\hat{\Bbb{R}}$ denote the compactifications $\Bbb{C}\cup\{\infty\}$
and $\Bbb{R}\cup\{\infty\}$ of $\Bbb{C}$ and $\Bbb{R}$, and we use $z$ and $x$ for the coordinates on them respectively.  
The open disk $\left\{|z|<r\right\}$ is denoted by $\Bbb{D}_r$ and for the open unit disk $\Bbb{D}_1$ the notation $\Bbb{D}$ is used instead. The Mandelbrot set in the complex plane is denoted by $\mathbf{M}$.\\
\indent
For notations related to the moduli space of rational maps, we mainly follow \cite{MR1246482}: the moduli space 
of quadratic rational maps is the variety $\mathcal{M}_2(\Bbb{C})={\rm{Rat}}_2(\Bbb{C})\big/{\rm{PGL}}_2(\Bbb{C})$ where ${\rm{Rat}}_2(\Bbb{C})\subset\Bbb{P}^5(\Bbb{C})$ is the space of degree two rational maps and the Möbius conjugacy class of a map $f\in{\rm{Rat}}_2(\Bbb{C})$ is denoted by $\langle f\rangle\in\mathcal{M}_2(\Bbb{C})$. The \textit{symmetry locus}  $\mathcal{S}(\Bbb{C})$  is the subvariety determined by rational maps $f$ for which the group ${\rm{Aut}}(f)$ of Möbius transformations commuting with $f$ is non-trivial. The curve ${\rm{Per}}_n(\lambda)\subset\mathcal{M}_2(\Bbb{C})$ by definition consists of quadratic maps that admit an $n$-cycle of multiplier $\lambda$ \cite[p. 41]{MR1246482}. To speak of the corresponding sets of $\Bbb{R}$-points, we use ${\rm{Rat}}_2(\Bbb{R})$, $\mathcal{M}_2(\Bbb{R})$, $\mathcal{S}(\Bbb{R})$ and ${\rm{Per}}_n(\lambda)(\Bbb{R})$
respectively.
\\
\indent
A real-valued function is called \textit{monotonic} if its level sets are connected. It is called monotonic over a subset of its domain if the corresponding restriction is monotonic. \\
\indent 
A self-map of an interval is called \textit{boundary-anchored} if it takes boundary points to boundary points. A \textit{lap}  is defined to be a maximal monotonic subinterval. The number of laps (i.e. modality$+1$) is called the \textit{lap number}.  The entropy of a multimodal  interval map is famously the same as the exponential growth rate  of lap numbers of its iterates \cite[Theorem 1]{MR579440}.\\
\indent
Given a rational map $f$, $\mathcal{J}(f)$ denotes its Julia set and $\bar{f}$ is the map obtained from applying the complex conjugation to its coefficients. For the broader class of complex-valued functions on $\hat{\Bbb{C}}$, we use the notation $\tilde{f}$ instead for the map obtained from conjugating $z\mapsto f(z)$ with $z\mapsto\bar{z}$; cf. \eqref{involution}. We call a rational map $f$ to be \textit{post-critically finite} or \textit{PCF} if its critical points are eventually periodic. 

\textbf{Acknowledgment.} The author would like to thank Laura DeMarco for proposing this project on the entropy behavior of real quadratic rational maps and also for many fruitful conversations. I also would like to thank Corinna Wendisch for her help in the early stages of this project,  Eva Uhre for generously sharing her thesis \cite{MRUhre}, and Yan Gao for helpful discussions. I am grateful to Kevin Pilgrim for his careful reading of the first draft of this paper and numerous helpful suggestions. \\
\indent
The code for drawing post-critical curves (the so called bones) in \S\ref{S6.2} along with the codes for generating entropy contour plots in \S\ref{S5} have been written in MATLAB\texttrademark.  The latter are based on algorithms introduced in 
\cite{MR1002478, MR1151977}. All codes are available from author's \href{https://github.com/FilomKhash/Real-quadratic-rational-maps}{GitHub}. The bifurcation plots of \S\ref{S7} have been generated with \href{https://sourceforge.net/projects/detool/}{Dynamics Explorer}.

\section{The moduli space of real quadratic rational maps}\label{S2}
The main goal of the section is to develop a framework for studying the moduli space $\mathcal{M}_2(\Bbb{R})$ 
and its natural dynamical partition alluded to in \S\ref{S1} along with the real entropy function $h_\Bbb{R}$ defined on a Zariski open subset of $\mathcal{M}_2(\Bbb{R})$. This will enable us to formulate the monotonicity question as appeared in Question \ref{monotonicity formulation}. The discussion here can be mostly considered as a special case of the more general one in \cite[\S2]{filom2021real}
concerning  the real entropy function  on the moduli space of real rational maps of an arbitrary degree $d\geq 2$.
  
\subsection{Background on the moduli space $\mathcal{M}_2$}\label{S2.1}
The moduli space 
$\mathcal{M}_2:={\rm{Rat}}_2/{\rm{PSL}}_2$ 
of quadratic rational maps has been studied thoroughly.  In \cite{MR1246482}, Milnor treats this space chiefly as the complex orbifold $\mathcal{M}_2(\Bbb{C})={\rm{Rat}}_2(\Bbb{C})/{\rm{PSL}}_2(\Bbb{C})$ which is the space of  Möbius conjugacy classes of quadratic rational maps and identifies it with the plane $\Bbb{C}^2$ by dynamical means. Silverman later constructed the general moduli space $\mathcal{M}_d:={\rm{Rat}}_d/{\rm{PSL}}_2$  
as an affine integral scheme over $\Bbb{Z}$ by means of the geometric invariant theory and showed that in particular, $\mathcal{M}_2$
is isomorphic to the affine scheme $\Bbb{A}^2_\Bbb{Z}$ \cite{MR1635900}. 

The identification $\mathcal{M}_2(\Bbb{C})=\Bbb{C}^2$ is not hard to explain. A quadratic rational map $f\in\Bbb{C}(z)$
admits three fixed points (counted with multiplicity) whose multipliers $\mu_1,\mu_2,\mu_3$ satisfy the 
\textit{fixed point formula} 
\begin{equation}\label{fixed point formula}
\frac{1}{1-\mu_1}+\frac{1}{1-\mu_2}+\frac{1}{1-\mu_3}=1.
\end{equation} 
Forming symmetric functions of these multipliers  
\begin{equation}\label{symmetric functions}
\sigma_1=\mu_1+\mu_2+\mu_3,\quad \sigma_2=\mu_1\mu_2+\mu_2\mu_3+\mu_3\mu_1,\quad \sigma_3=\mu_1\mu_2\mu_3,
\end{equation}
the formula above amounts to $\sigma_3=\sigma_1-2$ and therefore, there is an embedding 
$$
\langle f\rangle\mapsto \left(\sigma_1(f),\sigma_2(f),\sigma_3(f)\right)
$$
of $\mathcal{M}_2(\Bbb{C})$ into the affine space $\Bbb{C}^3$ as the hyperplane $z=x-2$. The first two components $\sigma_1,\sigma_2$ will be repeatedly used to identify $\mathcal{M}_2(\Bbb{C})$ with the plane $\Bbb{C}^2$.

\subsection{Normal forms}\label{S2.2}
The paper \cite{MR1246482} utilizes various normal forms to investigate quadratic rational maps. This includes the 
\textit{fixed-point normal form}
\begin{equation}\label{fixed-point normal form}
\frac{z^2+\mu_1z}{\mu_2z+1}\quad (\mu_1\mu_2\neq 1)
\end{equation}
where $\mu_1, \mu_2$ are the multipliers of fixed points $0, \infty$. There is a single conjugacy class missed here which is that of the map 
$z\mapsto z+\frac{1}{z}$ with exactly one fixed point.  There is also the \textit{critical normal form} 
\begin{equation}\label{critical normal form}
\frac{\alpha z^2+\beta}{\gamma z^2+\delta}\quad (\alpha\delta-\beta\gamma=1)
\end{equation}
in which the critical points are $0,\infty$. Finally, there is the \textit{mixed normal form} 
\begin{equation}\label{mixed normal form}
\frac{1}{\mu}\left(z+\frac{1}{z}\right)+a\quad (\mu\neq 0)
\end{equation}
in which the critical points $\pm 1$ and the fixed point $\infty$ of multiplier $\mu\neq 0$ are specified. For future applications, we record the coordinates $\left(\sigma_1,\sigma_2\right)$ of the conjugacy class of this map in the moduli space $\mathcal{M}_2(\Bbb{C})\cong\Bbb{C}^2$ (\cite[Appendix C]{MR1246482}):
\begin{equation}\label{coordinates}
\begin{cases}
\sigma_1=\mu(1-a^2)-2+\frac{4}{\mu}\\
\sigma_2=\left(\mu+\frac{1}{\mu}\right)\sigma_1-\left(\mu^2+\frac{2}{\mu}\right)
\end{cases}.
\end{equation}

\subsection{The real moduli space $\mathcal{M}_2(\Bbb{R})$}\label{S2.3}
Let us now concentrate on the case of $f(z)\in\Bbb{R}(z)$ being a real quadratic rational map. There are descriptions of $\sigma_i$'s in terms of coefficients of the rational map $f$, and thus the conjugacy class of $f$ determines a point of $\mathcal{M}_2(\Bbb{R})$. 
As we shall see shortly, conversely any point of $\mathcal{M}_2(\Bbb{C})\cong\Bbb{C}^2$ with real coordinates can be represented by a real map.\footnote{That is to say, the \textit{field of moduli} $\Bbb{R}$ is a \textit{field of definition} too. This may fail in higher degrees; see  \cite[\S2]{filom2021real} for a detailed discussion on this issue.} We are interested in the dynamics of $f\restriction_{\hat{\Bbb{R}}}:\hat{\Bbb{R}}\rightarrow\hat{\Bbb{R}}$ which is invariant only under real conjugacies, i.e. conjugacy by elements of ${\rm{PGL}}_2(\Bbb{R})$. Thus there is  the question of whether multiple real conjugacy classes can lie in a single complex conjugacy class or not. The next proposition addresses all these issues and expands upon the discussion at the beginning of \cite[\S10]{MR1246482}:
\begin{proposition}\label{classification}
An orbit of the action of ${\rm{PGL}}_2(\Bbb{C})$ on the space of quadratic rational maps ${\rm{Rat}}_2(\Bbb{C})$ contains an element of ${\rm{Rat}}_2(\Bbb{R})$ if and only if  the corresponding point in $\mathcal{M}_2(\Bbb{C})\cong\Bbb{C}^2$ lies in the real locus
$\mathcal{M}_2(\Bbb{R})\cong\Bbb{R}^2$. Furthermore, the aforementioned ${\rm{PGL}}_2(\Bbb{C})$-orbit has a real representative from one of the following families:
\begin{equation} \label{families}
\{z^2+c\}_{c>\frac{1}{4}}; \quad
\left\{\frac{1}{\mu}\left(z+\frac{1}{z}\right)+a\right\}_{\mu\in\Bbb{R}-\{0\}, a\geq 0}; \quad
\left\{\frac{1}{\mu}\left(z-\frac{1}{z}\right)+b\right\}_{\mu\in\Bbb{R}-\{0\}, b\geq 0}.
\end{equation}
These families remain disjoint under the action of ${\rm{PGL}}_2(\Bbb{R})$. Also it might be the case that a 
${\rm{PGL}}_2(\Bbb{C})$-orbit contains two of these maps which are not ${\rm{PGL}}_2(\Bbb{R})$-conjugate. This occurs precisely for points of the symmetry locus, that is for a conjugacy class such as 
$\left\langle\frac{1}{\mu}\left(z+\frac{1}{z}\right)\right\rangle=\left\langle\frac{1}{\mu}\left(z-\frac{1}{z}\right)\right\rangle\,(\mu\in\Bbb{R}-\{0\})$ where maps $\frac{1}{\mu}(z\pm\frac{1}{z})$ are conjugate only over $\Bbb{C}$. 
\end{proposition}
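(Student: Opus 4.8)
The plan is to establish the three assertions in turn: first that real points of the moduli space are realized by real maps, then that every such class admits a representative from one of the three listed families, and finally that the only collapsing of real classes into a single complex class happens along the symmetry locus. For the first claim, I would argue via the fixed-point structure. A point $(\sigma_1,\sigma_2)\in\mathcal{M}_2(\Bbb{R})\cong\Bbb{R}^2$ corresponds to symmetric functions $\sigma_1,\sigma_2$ (and $\sigma_3=\sigma_1-2$ by the fixed point formula~\eqref{fixed point formula}) of the fixed-point multipliers $\mu_1,\mu_2,\mu_3$, which are therefore roots of a real cubic. At least one multiplier, say $\mu_1$, is real, and the remaining pair is either real or complex conjugate. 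In either case I can select a fixed point whose multiplier is real and place it at $\infty$; the remaining data then let me normalize into the mixed normal form~\eqref{mixed normal form} with real $\mu$ and real $a$ (or its sibling with a minus sign), yielding a genuinely real representative. The cleanest route is to read off from the explicit coordinate formula~\eqref{coordinates} that, given real $\sigma_1,\sigma_2$, one can solve for real $\mu\neq 0$ and real $a^2$, so that $a$ is either real or purely imaginary---the two cases producing respectively the $z+\frac1z$ and the $z-\frac1z$ form after a rescaling that turns imaginary $a$ into a real $b$.

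To pin down the three families, I would classify by the reality type of the critical points, since this is conjugacy-invariant over $\Bbb{R}$. The critical points of~\eqref{mixed normal form} are $\pm1$, but the relevant dichotomy is whether, after reduction, the original map has two real critical points, two complex-conjugate critical points, or falls into the purely polynomial case. When $f$ has a superattracting structure realizable as $z^2+c$ I get the first family (with $c>\frac14$ corresponding to the regime not already covered by the mixed form, where the relevant fixed point is repelling); when the critical points are real and distinct I land in the $\frac1\mu(z+\frac1z)+a$ family; and when they are complex conjugate I land in the $\frac1\mu(z-\frac1z)+b$ family, the sign flip reflecting exactly the passage from real to imaginary critical points. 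The ranges $a\ge 0$, $b\ge 0$ are arranged by the residual real symmetries (the conjugation $z\mapsto -z$ sends $a\mapsto -a$), so normalizing the sign exhausts the ${\rm{PGL}}_2(\Bbb{R})$-orbit without overlap. Disjointness of the three families under ${\rm{PGL}}_2(\Bbb{R})$ then follows because the reality type of the critical points is a real-conjugacy invariant that distinguishes them.

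For the last assertion, the key is that two real maps lying in one ${\rm{PGL}}_2(\Bbb{C})$-orbit but in distinct ${\rm{PGL}}_2(\Bbb{R})$-orbits signals a nontrivial real structure on the fiber. I would show that if $g=\phi f\phi^{-1}$ for some $\phi\in{\rm{PGL}}_2(\Bbb{C})\setminus{\rm{PGL}}_2(\Bbb{R})$ with $f,g$ both real, then applying complex conjugation gives $g=\bar\phi f\bar\phi^{-1}$ as well, whence $\bar\phi^{-1}\phi$ commutes with $f$. Thus either $\bar\phi^{-1}\phi$ is trivial (forcing $\phi\in{\rm{PGL}}_2(\Bbb{R})$ up to scalar, contradicting the assumption) or ${\rm{Aut}}(f)$ is nontrivial, i.e. $\langle f\rangle\in\mathcal{S}(\Bbb{R})$. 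The explicit witness is the pair $\frac1\mu(z\pm\frac1z)$, conjugate by $z\mapsto iz$ over $\Bbb{C}$ but not over $\Bbb{R}$, sitting on the symmetry locus because each admits the involution $z\mapsto -z$. I expect the main obstacle to be the second step---the careful bookkeeping that every real class genuinely reaches one of the three normal forms with the stated sign-normalized parameter ranges, and that the degenerate cases (the excluded class $z\mapsto z+\frac1z$, coincidences of fixed-point multipliers, and the boundary $c=\frac14$) are correctly accounted for rather than double-counted; verifying exhaustiveness and disjointness simultaneously requires tracking which real M\"obius transformations survive as stabilizers in each normal form.
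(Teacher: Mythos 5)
Your outline of the first and third assertions coincides with the paper's own argument: the real cubic $x^3-\sigma_1x^2+\sigma_2x-\sigma_3=0$ for the multipliers produces a real-multiplier fixed point, the formula \eqref{coordinates} forces $a^2\in\Bbb{R}$ so that $a$ is real or purely imaginary (handled by the conjugations $z\mapsto -z$ and $z\mapsto {\rm{i}}z$ respectively), and the complex-conjugation trick $\bar{\alpha}\circ f\circ\overline{\alpha^{-1}}=\alpha\circ f\circ\alpha^{-1}$ places any class with two distinct real representatives on the symmetry locus. Those parts are fine, modulo the small point that ``precisely'' also requires identifying the real symmetry locus with the classes $\left\langle k\left(z+\frac{1}{z}\right)\right\rangle$, $k\in\Bbb{R}-\{0\}$, which the paper extracts from Milnor's classification together with the real-multiplier condition on $k$.

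The genuine gap is in your disjointness argument. You propose to separate the three families by ``the reality type of the critical points,'' but this invariant only isolates the third family $\frac{1}{\mu}\left(z-\frac{1}{z}\right)+b$ (critical points $\pm{\rm{i}}$, so the restriction to $\hat{\Bbb{R}}$ is an unramified double cover): both $z^2+c$ and $\frac{1}{\mu}\left(z+\frac{1}{z}\right)+a$ have two \emph{real} critical points, so your invariant cannot distinguish the first family from the second. A different invariant is needed for that pair, and the paper supplies one: for $c>\frac{1}{4}$ the polynomial $z^2+c$ has multiplier spectrum $\left\{0,\,1\pm\sqrt{1-4c}\right\}$ with the finite multipliers non-real, hence it admits no fixed point of non-zero real multiplier, whereas every map $\frac{1}{\mu}\left(z+\frac{1}{z}\right)+a$ has the fixed point $\infty$ with multiplier $\mu\in\Bbb{R}-\{0\}$. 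Since the multiplier spectrum is a conjugacy invariant, the two families cannot meet. Relatedly, your trichotomy for \emph{reaching} the normal forms is not quite the right one either: the case split that makes the argument go through is not by critical points but by whether there exists a real fixed point of non-zero multiplier (if yes, the mixed normal form applies and the sign of $a^2$ decides between families two and three; if no, one is forced into $\{z^2+c\}_{c>\frac{1}{4}}$). Your phrasing ``where the relevant fixed point is repelling'' misidentifies the obstruction — the finite fixed points of $z^2+c$ for $c>\frac{1}{4}$ are indeed repelling, but what prevents the mixed normal form is that their multipliers are non-real.
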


\begin{proof}
{Pick a map $f(z)\in{\rm{Rat}}_2(\Bbb{C})$ for which $\sigma_1,\sigma_2$ and therefore $\sigma_3$ (as defined in \eqref{symmetric functions}) are real. Multipliers of fixed points are roots of the real cubic equation $x^3-\sigma_1x^2+\sigma_2x-\sigma_3=0$. Thus there is a fixed point whose multiplier is a real number, say $\mu\in\Bbb{R}$. When  $\mu=0$, there is a super-attracting fixed point and thus  $f(z)$ is conjugate to a  polynomial of the form $z^2+c$ where $\sigma_1=2, \sigma_2=4c$. Hence $c\in\Bbb{R}$. In other cases where the original $\mu$ is non-zero or $c\leq\frac{1}{4}$ in the polynomial $z^2+c$, there has to be more than one real fixed point and thus there is a real fixed point of non-zero multiplier. So aside from conjugacy classes of maps in $\{z^2+c\}_{c>\frac{1}{4}}$, there is an element of $\langle f\rangle$
in  the mixed normal form \eqref{mixed normal form}  $\frac{1}{\mu}\left(z+\frac{1}{z}\right)+a$ where $\mu\in\Bbb{R}-\{0\}$. But then the first equation of \eqref{coordinates} and $\sigma_1\in\Bbb{R}$ indicate that $a^2\in\Bbb{R}$. 
We should have either $a\in\Bbb{R}$ or $a\in{\rm{i}}\Bbb{R}$. In the former possibility, after conjugation with $z\mapsto -z$ if necessary, we can assume $a\geq 0$ and so we are dealing with a map from the second family in \eqref{families}. In the latter case, we conjugate with $z\mapsto {\rm{i}}z$ to get to a map of the form 
$\frac{1}{\mu}\left(z-\frac{1}{z}\right)+b$ from the third family where $b={\rm{i}}a\in\Bbb{R}$. With the same argument, there is no loss of generality to assume $b\geq 0$. This concludes the proof of the first part of the proposition. \\
\indent
For the second part, notice that transformations $x\mapsto x^2+c$ or $x\mapsto \frac{1}{\mu}(x+\frac{1}{x})+a$ of $\hat{\Bbb{R}}$  are dynamically quite different from $x\mapsto \frac{1}{\mu'}(x-\frac{1}{x})+b$  as the first two have two critical points  on $\hat{\Bbb{R}}$ while the other one is an unramified two-sheeted covering of the circle due to the absence of critical points; so they cannot be conjugated even via a homeomorphism of $\hat{\Bbb{R}}$.  Lastly, a map of the form $\frac{1}{\mu}\left(z+\frac{1}{z}\right)+a$ is not ${\rm{PGL}}_2(\Bbb{R})$-conjugate with a polynomial $z^2+c$ with $c>\frac{1}{4}$ since, unlike the former, this polynomial does not admit any fixed point of non-zero real multiplier. \\
\indent
Finally, suppose  for a map $f$ from one of the families in \eqref{families} the complex conjugacy class 
$\langle f\rangle$  contains two distinct real conjugacy classes. This implies the existence of a Möbius transformation 
$\alpha\in {\rm{PGL}}_2(\Bbb{C})-{\rm{PGL}}_2(\Bbb{R})$ for which $\alpha\circ f\circ\alpha^{-1}\in\Bbb{R}(z)$. But then taking complex conjugates implies that 
$\bar{\alpha}\circ f\circ\overline{\alpha^{-1}}=\alpha\circ f\circ\alpha^{-1}$.  
Hence the non-identity Möbius map  $\alpha^{-1}\circ\bar{\alpha}$ lies in ${\rm{Aut}}(f)$. We conclude that 
$\langle f\rangle$ is on the symmetry locus in $\mathcal{M}_2(\Bbb{C})$ -- the locus determined by maps which admit non-trivial automorphisms. It has been established in \cite[\S5]{MR1246482} that such maps have to be conjugate to a map of the form $k\left(z+\frac{1}{z}\right)$ where $k\in\Bbb{C}-\{0\}$. Again, $z\mapsto k\left(z+\frac{1}{z}\right)$ should have a fixed point of real multiplier. But other than $\infty$ which is of multiplier $\frac{1}{k}$, other fixed points are 
$\pm\sqrt{\frac{k}{1-k}}$ with the common multiplier  $2k-1$. Hence $k\in\Bbb{R}$ and $\langle f\rangle=\left\langle k(z+\frac{1}{z})\right\rangle$.
 }
\end{proof}

Proposition \ref{classification} allows us to formulate the monotonicity question concerning connectedness of the isentropes in the moduli space $\mathcal{M}_2(\Bbb{R})$ of real quadratic maps which is identified with the plane $\Bbb{R}^2$ via 
$\left(\sigma_1,\sigma_2\right)$.  Here, any point  can be presented by a real quadratic map  uniquely up to the action ${\rm{PGL}}_2(\Bbb{R})$ unless the point is on the symmetry locus $\mathcal{S}(\Bbb{R})$.
The dynamics of the restriction to the circle $\hat{\Bbb{R}}$ is uniquely determined up to a real Möbius conjugacy, and so all elements of the class $\langle f\rangle$ are of the same real entropy (as defined in \eqref{real entropy}). 
Hence, the following is a well defined  real entropy function on the complement of $\mathcal{S}(\Bbb{R})$ in
 $\mathcal{M}_2(\Bbb{R})$:
\begin{equation}\label{function}
\begin{cases}
h_{\Bbb{R}}:\mathcal{M}_2(\Bbb{R})-\mathcal{S}(\Bbb{R})\rightarrow \left[0,\log(2)\right]\\
\left(\sigma_1,\sigma_2\right)\mapsto h_{\rm{top}}\left(f\restriction_{\hat{\Bbb{R}}}:\hat{\Bbb{R}}\rightarrow\hat{\Bbb{R}}\right)
\end{cases}
\end{equation}
where $f\in\Bbb{R}(z)$ is a real representative of the conjugacy class 
$\left(\sigma_1,\sigma_2\right)\in\mathcal{M}_2(\Bbb{C})\cong\Bbb{C}^2$. This function is moreover continuous in the analytic topology by the result of \cite{MR1372979}.

The following example attests that for getting a single-valued entropy function on $\mathcal{M}_2(\Bbb{R})$,  it is absolutely necessary to exclude the symmetry locus.

\begin{example}\label{symmetry locus entropy}
Given $\mu\in\Bbb{R}-\{0\}$, real quadratic rational  maps $\frac{1}{\mu}\left(z\pm\frac{1}{z}\right)$   are conjugate via 
$z\mapsto{\rm{i}}z$ but exhibit quite different dynamical behavior on the real circle. The critical points of 
$\frac{1}{\mu}\left(z-\frac{1}{z}\right)$ are not real, so  
it induces a degree two covering 
$x\mapsto\frac{1}{\mu}\left(x-\frac{1}{x}\right)$ of $\hat{\Bbb{R}}$
whose entropy is therefore $\log(2)$.\footnote{
In general,  degree $d$ coverings of a circle are always of topological entropy $\log(d)$ \cite[Theorem 1$^{'}$]{MR579440}.}
On the other hand, the topological  entropy of 
$x\mapsto\frac{1}{\mu}\left(x+\frac{1}{x}\right)$ vanishes: 
for $|\mu|\leq 1$ every orbit is attracted to the fixed point $\infty$ of multiplier $\mu$; for $\mu>1$ orbits in the invariant interval $(0,\infty)$ tend to the attracting fixed point $\frac{1}{\sqrt{\mu-1}}$ while those in the invariant interval 
$(-\infty,0)$ tend to the attracting fixed point $-\frac{1}{\sqrt{\mu-1}}$; and finally, for $\mu<-1$ there is no finite real fixed point and any point of $\hat{\Bbb{R}}$, other than the fixed point $\infty$ and its preimage $0$,  converges under iteration to the $2$-cycle consisting of $\pm\frac{1}{\sqrt{-\mu-1}}$ whose multiplier 
is $\left(\frac{2+\mu}{\mu}\right)^2<1$. 
\end{example}

\subsection{Partitioning $\mathcal{M}_2(\Bbb{R})$}\label{S2.4}
\begin{figure}[ht!]
\center
\includegraphics[width=15cm]{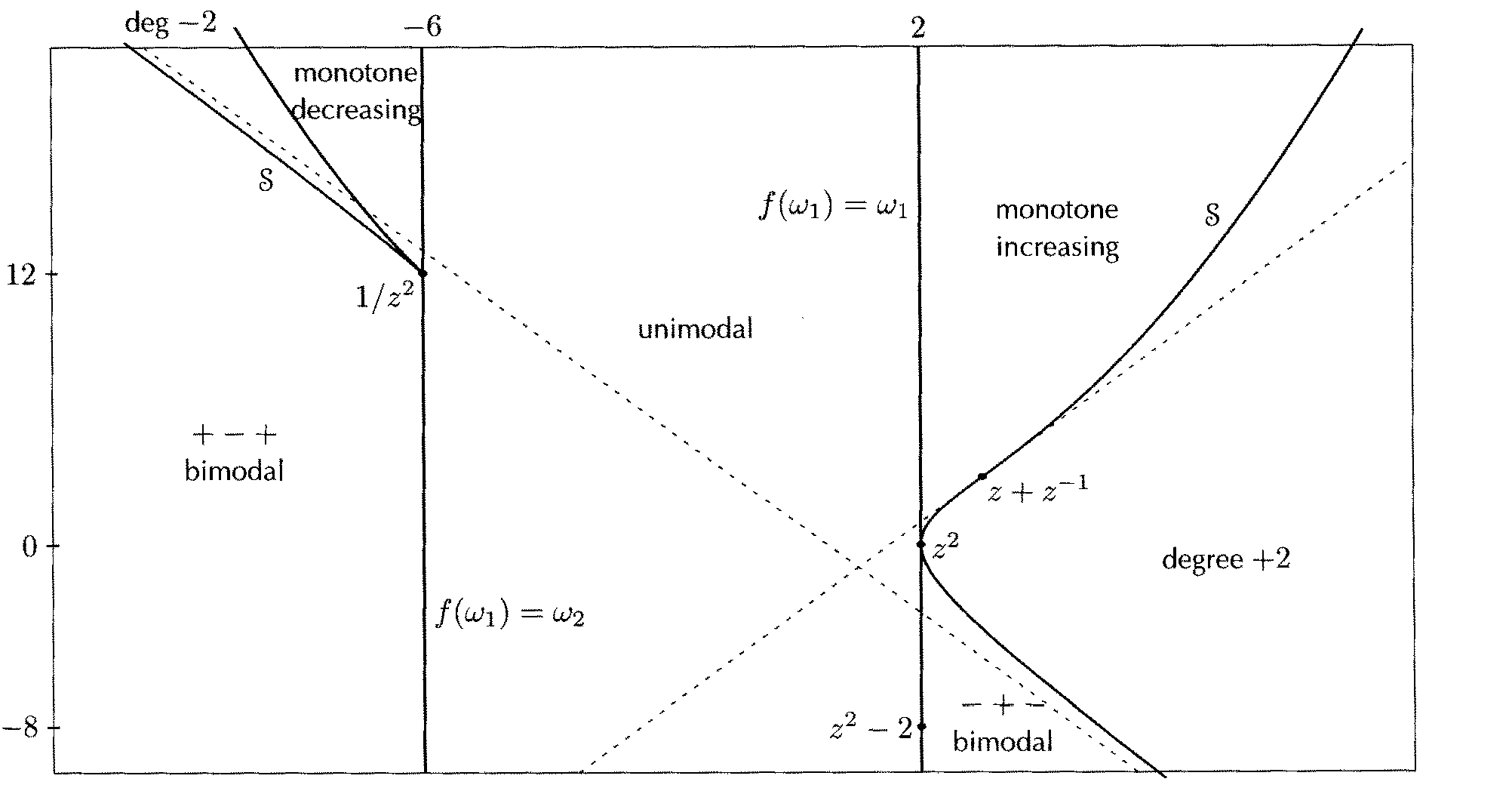}
\caption{The real moduli space $\mathcal{M}_2(\Bbb{R})$ as illustrated in  \cite[Figure 15]{MR1246482}. The post-critical lines $\sigma_1=-6,2$ ($\omega_1,\omega_2$ denote the critical points of $f$ here), the dotted lines ${\rm{Per}}_1(\pm 1)$, the real symmetry locus $\mathcal{S}(\Bbb{R})$ and the partition into seven regions according to the various types of the dynamics induced on $\hat{\Bbb{R}}$ are shown. The component of degree zero maps in 
$\mathcal{M}_2(\Bbb{R})-\mathcal{S}(\Bbb{R})$ is the union of monotonic, unimodal and bimodal regions that overlap only along the lines $\sigma_1=-6,2$.}
\label{fig:main}
\end{figure}

Going back to quadratic maps, let us briefly comment on the domain of $h_{\Bbb{R}}$ in \eqref{function}. Here is a parametric equation (adapted from \cite[p. 47]{MR1246482}) for the symmetry locus 
$\mathcal{S}(\Bbb{C})=\left\{\left\langle k(z+\frac{1}{z})\right\rangle \mid k\in\Bbb{C}-\{0\}\right\}$ in the $\left(\sigma_1,\sigma_2\right)$-plane:
\begin{equation}\label{symmetry locus}
\begin{cases}
\sigma_1=4k-2+\frac{1}{k}\\
\sigma_2=4k^2-4k+5-\frac{2}{k}
\end{cases}.
\end{equation}
As $k$ varies in $\Bbb{R}-\{0\}$, one gets a parametrization of the set of real points on this curve. There are two real components, one with $\sigma_1\leq -6$ parametrized with $k<0$ and the other with $\sigma_1\geq 2$ parametrized with 
$k>0$.  Thus $\mathcal{M}_2(\Bbb{R})-\mathcal{S}(\Bbb{R})$ has three connected components 
in the real $\left(\sigma_1,\sigma_2\right)$-plane; we shall see shortly in Proposition \ref{Julia circle} that only one of them matters:  the component not containing covering maps of degrees $\pm 2$ but determined by maps for which the topological degree on $\hat{\Bbb{R}}$ is zero. We henceforth refer to this component as the component of degree zero maps in $\mathcal{M}_2(\Bbb{R})-\mathcal{S}(\Bbb{R})$. 
This is all demonstrated in Figure \ref{fig:main}  that illustrates the real moduli space $\mathcal{M}_2(\Bbb{R})$ in $\left(\sigma_1,\sigma_2\right)$-coordinates. There are conspicuous vertical lines 
$\sigma_1=2,\sigma_1=-6$ cut off by the post-critical relations $f(c)=c$ and $f(c_1)=c_2$ respectively. The curves 
$${\rm{Per}}_{1}(1): 2\sigma_1-\sigma_2=3,\quad {\rm{Per}}_{1}(-1)={\rm{Per}}_{2}(1):2\sigma_1+\sigma_2=1,$$ 
are defined by the existence of certain parabolic cycles and are visible as dotted skew lines in Figure \ref{fig:main}; the detailed derivation of their equations can be found in \cite[\S 3]{MR1246482}.\\
\indent
The number of real fixed points will play a vital role in our analysis of the dynamics induced on the real circle; see observation \hyperref[f]{4.f} and Lemma \ref{attracting fixed point} in \S\ref{S4}. The transition occurs as one crosses the line ${\rm{Per}}_{1}(1)$
or the symmetry locus $\mathcal{S}(\Bbb{R})$. The former amounts to a qualitative change in the dynamics of the ambient map on the Riemann sphere, whereas the latter is simply due to the ambiguity in the choice of real representatives for points on $\mathcal{S}(\Bbb{R})$. The more delicate partition of the component of degree zero maps to five other regions apparent in Figure \ref{fig:main} will be addressed later in this subsection.

Given $f\in\Bbb{R}(z)$,  the entropy of the restriction 
$f\restriction_{\hat{\Bbb{R}}}$ solely depends on how the forward-invariant circle $\hat{\Bbb{R}}$  meets the Julia set $\mathcal{J}(f)$ of the ambient map  because  $\mathcal{J}(f)\cap \hat{\Bbb{R}}$ contains  the non-wandering set of $f\restriction_{\hat{\Bbb{R}}}$ aside from those Fatou points on $\hat{\Bbb{R}}$ which are periodic or in a rotation domain; and of course such non-wandering points of  $f\restriction_{\hat{\Bbb{R}}}$ do not contribute to its entropy.  
Consequently, having a criterion for a  closed subset such as $\hat{\Bbb{R}}$ to contain the Julia set would be convenient. 
\begin{lemma}\label{criterion}
Let $f$ be a rational map and $C$ a closed subset satisfying $f^{-1}(C)\subseteq C$ and $|C|\geq 3$. 
Then 
$\mathcal{J}(f)\subseteq C$.
\end{lemma}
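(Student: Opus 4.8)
The plan is to pass to the complement $U:=\hat{\Bbb{C}}-C$, which is open since $C$ is closed, and to show that $U$ lies entirely in the Fatou set (the domain of normality of the iterates $\{f^n\}$); taking complements then yields $\mathcal{J}(f)\subseteq C$ at once. The entire argument should reduce to a single application of Montel's theorem, with the hypothesis $|C|\geq 3$ supplying exactly the three omitted values that Montel requires.

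First I would translate the backward invariance of $C$ into forward invariance of $U$. From $f^{-1}(C)\subseteq C$ we get, upon complementing, $U=\hat{\Bbb{C}}-C\subseteq\hat{\Bbb{C}}-f^{-1}(C)=f^{-1}(U)$, which is precisely the statement $f(U)\subseteq U$. A trivial induction then gives $f^n(U)\subseteq U$ for every $n\geq 1$: indeed $f^{n+1}(U)=f\bigl(f^n(U)\bigr)\subseteq f(U)\subseteq U$. Consequently every iterate $f^n$, viewed as a holomorphic self-map restricted to the domain $U$, takes its values in $U$ and therefore omits all of $C$. Since these omitted values are drawn from the fixed set $C$ and do not depend on $n$, and since $|C|\geq 3$, the family $\{f^n\restriction_U\}_{n\geq 1}$ omits (at least) three fixed points of $\hat{\Bbb{C}}$ uniformly in $n$.

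By Montel's theorem, a family of meromorphic functions on a domain that omits three fixed values is normal; hence $\{f^n\restriction_U\}$ is normal on $U$, so $U$ is contained in the Fatou set $\hat{\Bbb{C}}-\mathcal{J}(f)$. Passing to complements gives $\mathcal{J}(f)=\hat{\Bbb{C}}-\bigl(\hat{\Bbb{C}}-\mathcal{J}(f)\bigr)\subseteq\hat{\Bbb{C}}-U=C$, as desired. Frankly there is no serious obstacle here; the only points demanding a little care are getting the direction of the invariance right (backward invariance of $C$ becomes forward invariance of $U$, not the reverse) and confirming that the three omitted values are the same for all $n$, which is automatic because they all lie in the single set $C$. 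This is exactly the hypothesis under which Montel applies, which is why the bound $|C|\geq 3$ is sharp.
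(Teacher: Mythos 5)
Your proof is correct and is exactly the argument the paper compresses into its one-line proof (``an immediate consequence of Montel's theorem''): backward invariance of $C$ gives forward invariance of $U=\hat{\Bbb{C}}-C$, so the iterates on $U$ omit the at least three points of $C$ and Montel puts $U$ in the Fatou set. No substantive difference in approach.
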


\begin{proof}
An immediate consequence of Montel's theorem.
\end{proof}

\begin{proposition}\label{Julia circle}
Let $f$ be a real quadratic rational map of degree two. The restriction
$f\restriction_{\hat{\Bbb{R}}}:\hat{\Bbb{R}}\rightarrow\hat{\Bbb{R}}$
is surjective if and only if critical orbits do not collide with the circle $\hat{\Bbb{R}}=\Bbb{R}\cup\{\infty\}$ or equivalently, if and only if the restriction is a covering map. In such a situation the Julia set is either the whole circle $\hat{\Bbb{R}}$ or a Cantor set contained in it and in particular, $h_\Bbb{R}(f)=\log(2)$.
\end{proposition}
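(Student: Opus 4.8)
The plan is to reduce everything to the clean dichotomy already recorded in Proposition \ref{classification}. Up to a real Möbius conjugacy, which alters neither the topological type of $f\restriction_{\hat{\Bbb{R}}}$ nor the Julia set, $f$ lies in one of the three families of \eqref{families}; hence its two critical points are either both real, i.e. on $\hat{\Bbb{R}}$ (the first two families), or form a complex-conjugate pair disjoint from $\hat{\Bbb{R}}$ (the third family). I would first dispose of the real-critical-point case. There $g:=f\restriction_{\hat{\Bbb{R}}}$ has a fold at each critical point, so it is not a local homeomorphism and its topological degree as a circle map is $0$; being nonconstant, $g$ maps $\hat{\Bbb{R}}$ onto the closed arc spanned by the two critical values, a proper subset. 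Thus $g$ is neither a covering nor surjective, and its critical orbit meets $\hat{\Bbb{R}}$ already at the critical point. This simultaneously falsifies all three conditions.

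For the complex-conjugate case, $\hat{\Bbb{R}}$ carries no critical point, so $f$ is a local biholomorphism along $\hat{\Bbb{R}}$ and, since $f$ is real, $g=f\restriction_{\hat{\Bbb{R}}}$ is a local homeomorphism of the circle. Its image is open, closed and nonempty in the connected space $\hat{\Bbb{R}}$, so $g$ is onto and a covering map, necessarily of degree $\pm 2$. I would then upgrade this to complete invariance of the circle: given real $r$, a non-real preimage $w$ would force the distinct pair $\{w,\bar w\}$ to exhaust $f^{-1}(r)$ by degree, leaving $r$ with no real preimage and contradicting surjectivity of $g$; hence $f^{-1}(\hat{\Bbb{R}})=\hat{\Bbb{R}}$. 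Since preimages of real points are then real, no forward image of a non-real critical point can be real, as pulling back would force the critical point itself onto $\hat{\Bbb{R}}$. This closes the loop: surjective $\Leftrightarrow$ covering $\Leftrightarrow$ critical orbits avoid $\hat{\Bbb{R}}$.

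It remains to identify the Julia set and the entropy in the covering case. With $f^{-1}(\hat{\Bbb{R}})=\hat{\Bbb{R}}$ and $|\hat{\Bbb{R}}|\geq 3$, Lemma \ref{criterion} applied to $C=\hat{\Bbb{R}}$ gives $\mathcal{J}(f)\subseteq\hat{\Bbb{R}}$. Now $\mathcal{J}(f)$ is compact, perfect and completely invariant. If it has empty interior in $\hat{\Bbb{R}}$ it contains no arc, so its components are singletons and it is a Cantor set. If instead it contains an arc $I$, I would pick a round disk $U$ about an interior point of $I$ with $U\cap\hat{\Bbb{R}}\subseteq I$; complete invariance of $\hat{\Bbb{R}}$ yields the identity $f^n(U)\cap\hat{\Bbb{R}}=f^n(U\cap\hat{\Bbb{R}})$ (because every $f^{-n}$-preimage of a real point is real), so the blow-up property of the Julia set gives $\bigcup_n f^n(I)\supseteq\hat{\Bbb{R}}\setminus E$ with $|E|\leq 2$; as $\mathcal{J}(f)$ is forward invariant and closed, this forces $\mathcal{J}(f)=\hat{\Bbb{R}}$. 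Finally $h_\Bbb{R}(f)=h_{\mathrm{top}}(g)=\log(2)$, since $g$ is a degree-two covering of the circle (the entropy fact cited in Example \ref{symmetry locus entropy}).

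I expect the arc-to-whole-circle step to be the crux: the subtlety is that the planar blow-up property must be transported onto the one-dimensional invariant circle, which is precisely where the complete invariance $f^{-1}(\hat{\Bbb{R}})=\hat{\Bbb{R}}$ is indispensable. By contrast, the three-way equivalence and the reduction to a Cantor set once the interior is empty are routine, resting on Proposition \ref{classification} and elementary circle topology.
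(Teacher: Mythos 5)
Your overall architecture is sound and, for the covering case and the Cantor-set-versus-whole-circle dichotomy, essentially reproduces the paper's argument: complete invariance of $\hat{\Bbb{R}}$ via conjugate preimages, Lemma \ref{criterion} to trap $\mathcal{J}(f)$ inside $\hat{\Bbb{R}}$, and Montel's blow-up property transported to the circle. Those steps are correct, and your explicit observation that the identity $f^n(U)\cap\hat{\Bbb{R}}=f^n(U\cap\hat{\Bbb{R}})$ rests on backward invariance is a point the paper's own proof glosses over.

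The gap is in the real-critical-point case, where you assert that ``being nonconstant, $g$ maps $\hat{\Bbb{R}}$ onto the closed arc spanned by the two critical values, a proper subset.'' This is the one direction that actually needs an argument, and neither nonconstancy nor the degree computation supplies it: a degree-zero circle map with two turning points can perfectly well be surjective (the two monotone branches of $\hat{\Bbb{R}}-\{c_1,c_2\}$ could a priori sweep out the two \emph{complementary} arcs joining $v_1=f(c_1)$ to $v_2=f(c_2)$), so that configuration must be excluded. The fix is short but has to be said. One option: since $f$ is quadratic, $f^{-1}(v_i)=\{c_i\}$, and because $f$ is real with a simple critical point at $c_i\in\hat{\Bbb{R}}$, it genuinely folds there, sending a real neighborhood of $c_i$ to a one-sided neighborhood of $v_i$; by continuity and $f^{-1}(v_i)=\{c_i\}$, every point of $f(\hat{\Bbb{R}})$ near $v_i$ lies on that one side, so $v_i$ is a boundary point of the image and $f(\hat{\Bbb{R}})\neq\hat{\Bbb{R}}$. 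Alternatively, note that a real point which is not a critical value has either zero or two real preimages (its fiber is invariant under conjugation), which is incompatible with the complementary-arcs picture; this is in effect the paper's route, which counts components of the unramified covering $\hat{\Bbb{R}}-\{c_1,c_2\}\rightarrow\hat{\Bbb{R}}-\{v_1,v_2\}$ and contradicts $f$ being two-to-one over a one-sided neighborhood of a critical value. With this one step supplied, your proof is complete.
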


\begin{proof}
If there is no critical point located on $\hat{\Bbb{R}}$, the proper map 
$f\restriction_{\hat{\Bbb{R}}}:\hat{\Bbb{R}}\rightarrow\hat{\Bbb{R}}$ would be a local homeomorphism and hence a covering map. The degree must be $\pm 2$ as for any $p\in\hat{\Bbb{R}}$, $f^{-1}(p)\cap \hat{\Bbb{R}}$ is either empty or of size two. Since $\deg f=2$, this in particular implies that $f$ is surjective and $f^{-1}(\hat{\Bbb{R}})=\hat{\Bbb{R}}$; so that the forward iterates of critical points cannot belong to $\hat{\Bbb{R}}$ either. As the closed subset $\hat{\Bbb{R}}$ of $\hat{\Bbb{C}}$ is backward-invariant,  Lemma \ref{criterion} implies that $\mathcal{J}(f)$ is included in  $\hat{\Bbb{R}}$. So 
$h_\Bbb{R}(f)=h_{\rm{top}}\left(f\restriction_{\hat{\Bbb{R}}}:\hat{\Bbb{R}}\rightarrow\hat{\Bbb{R}}\right)\geq
h_{\rm{top}}\left(f\restriction_{\mathcal{J}(f)}:\mathcal{J}(f)\rightarrow\mathcal{J}(f)\right)=\log(2)$ 
is the highest possible value $\log(2)$.
The subset $\mathcal{J}(f)$,  just like any Julia set, is perfect and compact. If it is not totally disconnected, it has to contain a connected subset of $\hat{\Bbb{R}}$ with more than one element and therefore a non-degenerate open arc of this circle. So we might pick an open subset $U$ of $\hat{\Bbb{C}}$ whose intersection with the $f$-invariant subset $\hat{\Bbb{R}}$ is contained in $\mathcal{J}(f)$. But then  Montel's theorem implies that the union $\bigcup_n f^n(U)$
misses at most two points of the Riemann sphere. Taking the intersection with the backward-invariant set $\hat{\Bbb{R}}$, we conclude that the compact subset $\mathcal{J}(f)$ of $\hat{\Bbb{R}}$ omits at most two point of this circle and hence coincides with it.\\
\indent
In the presence of critical points and thus critical values on $\hat{\Bbb{R}}$, $f(\hat{\Bbb{R}})$ has to be a proper subset of $\hat{\Bbb{R}}$. Assume the contrary. If a critical point of $f$ is real, the same must be true for the other as they are roots of a quadratic equation with real coefficients. So all critical points and values lie on $\hat{\Bbb{R}}$ and $f$ induces an unramified covering 
$\hat{\Bbb{R}}-\text{critical points}\rightarrow \hat{\Bbb{R}}-\text{critical values}$. Both the range and the domain have  two components each homeomorphic with the contractible space $(0,1)$. Thus, $f$ maps the components of the domain onto the components of the range homeomorphically and is hence bijective. This cannot happen since $f$ is two-to-one over an open subset of $\hat{\Bbb{R}}$.
\end{proof}
\noindent
This proposition is  useful especially because it implies that in Question \ref{monotonicity formulation} only the component of degree zero maps matters; the component where the critical points are real and,  
instead of circle maps $f\restriction_{\hat{\Bbb{R}}}:\hat{\Bbb{R}}\rightarrow\hat{\Bbb{R}}$, 
we are dealing with interval maps $f\restriction_{f(\hat{\Bbb{R}})}:f(\hat{\Bbb{R}})\rightarrow f(\hat{\Bbb{R}})$  since in this situation $f(\hat{\Bbb{R}})$ would be a proper closed arc of the circle $\hat{\Bbb{R}}$. The interval map is of  the same entropy as the points of  $\hat{\Bbb{R}}-f(\hat{\Bbb{R}})$ are wandering and thus do not contribute to the topological entropy of $f\restriction_{\hat{\Bbb{R}}}$; hence the reformulation \eqref{real entropy 1} of \eqref{real entropy}.

The following observation will come in handy in studying the dynamics of such interval maps 
$f\restriction_{f(\hat{\Bbb{R}})}:f(\hat{\Bbb{R}})\rightarrow f(\hat{\Bbb{R}})$:
\begin{lemma}\label{Schwarzian}
The Schwarzian derivative of a real quadratic rational map with real critical points is negative. 
\end{lemma}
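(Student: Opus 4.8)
The plan is to exploit the two fundamental transformation properties of the Schwarzian derivative $Sf = \frac{f'''}{f'} - \frac{3}{2}\left(\frac{f''}{f'}\right)^2$ in order to reduce the claim to an explicit computation on a normal form. Recall that $Sf$ vanishes identically when $f$ is a Möbius transformation, and that it obeys the cocycle rule $S(g\circ f) = (Sg\circ f)\cdot (f')^2 + Sf$. First I would record two consequences for a real map $f$: post-composition by a real Möbius map leaves $Sf$ unchanged, while pre-composition by a real Möbius map $\phi$ multiplies it by $(\phi')^2 > 0$. Hence the \emph{sign} of $Sf$ at a point of $\hat{\Bbb{R}}$ (away from the critical points, where $Sf$ has poles) is unaffected by real Möbius conjugacy; equivalently, $Sf\,(dx)^2$ is a well-defined real meromorphic quadratic differential on the circle $\hat{\Bbb{R}}$ whose sign is conjugacy-invariant and, via the local coordinate $w = 1/x$, makes sense at $\infty$ as well.

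Granting this invariance, I would invoke Proposition \ref{classification} to replace $f$ by a convenient real representative of its class. Since $f$ has real critical points, it is excluded from the third family of \eqref{families} (whose members are unramified coverings with non-real critical points), so up to real Möbius conjugacy $f$ is either of the form $z^2 + c$ or in the mixed normal form $\frac{1}{\mu}\left(z + \frac{1}{z}\right) + a$. For the polynomial $z^2 + c$ a direct computation gives $Sf = -\frac{3}{2z^2}$, negative at every real $z \neq 0$. For the mixed normal form, writing $f = A\circ g$ with $g(z) = z + \frac{1}{z}$ and $A$ the affine (hence Möbius) map $w\mapsto \frac{1}{\mu}w + a$, post-composition invariance gives $Sf = Sg$, and a short calculation yields $Sg(z) = \frac{-6}{(z^2-1)^2}$, negative for every real $z$ with $z \neq \pm 1$.

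Assembling these, the sign-invariance from the first paragraph transports the negativity computed on the normal form back to $f$ itself, proving $Sf < 0$ on $\hat{\Bbb{R}}$ away from the critical points. The only point requiring care --- and the step I would treat as the main obstacle --- is the behavior at $\infty$ and at the critical points, where $Sf$ is not finite: I would phrase the conclusion in terms of the quadratic differential $Sf\,(dx)^2$ so that the statement ``$Sf < 0$'' is coordinate-free on all of $\hat{\Bbb{R}}$. Concretely, for $g(z) = z + \frac{1}{z}$ the expression $\frac{-6}{(z^2-1)^2}(dz)^2$ transforms under $w = 1/z$ into $\frac{-6}{(1-w^2)^2}(dw)^2$, which equals $-6\,(dw)^2$ at $w = 0$, confirming negativity at $\infty$; at the two critical points the differential has double poles with negative leading coefficient, consistent with $Sf \to -\infty$. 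This quadratic-differential bookkeeping is the only genuinely non-mechanical ingredient; the rest is the two elementary derivative computations above.
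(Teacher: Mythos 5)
Your proposal is correct and follows essentially the same route as the paper: reduce via Proposition \ref{classification} to the normal forms $z^2+c$ and $\frac{1}{\mu}\left(z+\frac{1}{z}\right)+a$, use the Möbius-invariance of the sign of the Schwarzian, and compute $-\frac{3}{2z^2}$ and $\frac{-6}{(z^2-1)^2}$ explicitly. Your extra bookkeeping with the quadratic differential at $\infty$ and at the critical points is additional rigor the paper leaves implicit, but it does not change the argument.
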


\begin{proof}
Let $f\in{\rm{Rat}}_2(\Bbb{R})$ be such a map. According to Proposition \ref{classification}, the system 
$f\restriction_{\hat{\Bbb{R}}}:\hat{\Bbb{R}}\rightarrow\hat{\Bbb{R}}$ is ${\rm{PGL}}_2(\Bbb{R})$-conjugate to a map of the form either  $\frac{1}{\mu}(x+\frac{1}{x})+a$ or $x^2+c$
where all the coefficients are real.  Recall that such a conjugacy does not change the sign of the Schwarzian derivative, so it suffices to verify that the Schwarzian derivatives of $\frac{1}{\mu}(x+\frac{1}{x})+a$ and $x^2+c$ are always negative: they are given by $-\frac{6}{(x^2-1)^2}$ and $-\frac{3}{2x^2}$ respectively.
\end{proof}

\begin{figure}[ht!]
\center
\includegraphics[width=15cm]{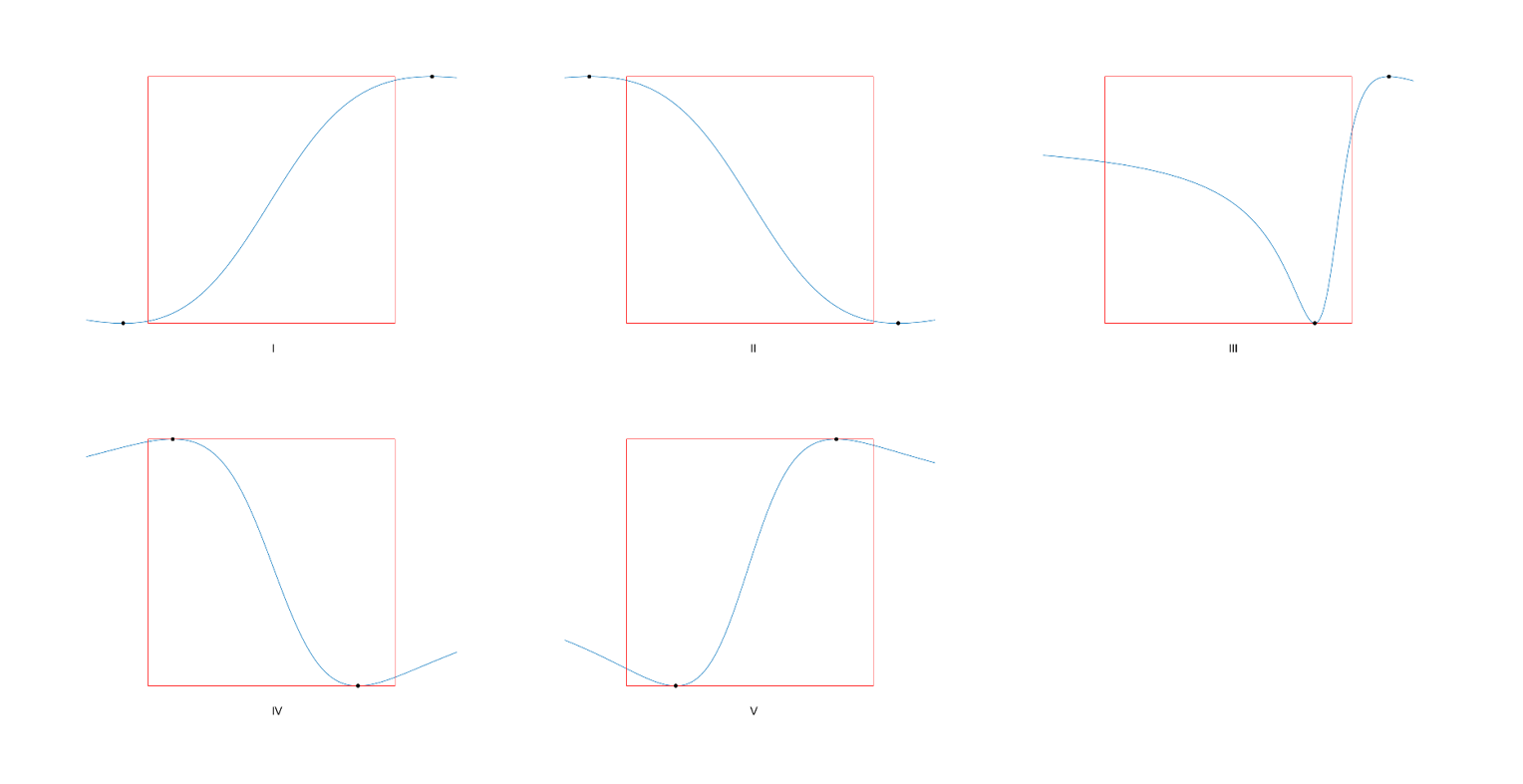}
\caption{Different types of dynamics that a real quadratic map $f$ with real critical points can induce on the compact invariant interval $f(\hat{\Bbb{R}})$. Here, the red square is built on the interval $f(\hat{\Bbb{R}})\subset\Bbb{R}$ and the critical points are marked in black on the graph of $f$. Based on how they are situated with respect to $f(\hat{\Bbb{R}})$, we call the corresponding point $\langle f\rangle$ to be in the
 I) monotone increasing, II) monotone decreasing, III) unimodal, 
IV) $(+-+)$-bimodal or V) $(-+-)$-bimodal region of  the moduli space $\mathcal{M}_2(\Bbb{R})$. 
See Figure \ref{fig:main} for these regions.}
\label{fig:graphs}
\end{figure}

In the component of degree zero maps in $\mathcal{M}_2(\Bbb{R})-\mathcal{S}(\Bbb{R})$, the positions of the two critical points  of $f$ on $\hat{\Bbb{R}}$ with respect to $f(\hat{\Bbb{R}})$ determine the number of laps. So, aside from the cases where $f\restriction_{\hat{\Bbb{R}}}:\hat{\Bbb{R}}\rightarrow\hat{\Bbb{R}}$ is a covering map of degree $\pm 2$, five dynamically distinct behaviors can  occur: the interval map $f\restriction_{f(\hat{\Bbb{R}})}:f(\hat{\Bbb{R}})\rightarrow f(\hat{\Bbb{R}})$
is either monotone increasing, monotone decreasing, unimodal, $(+-+)$-bimodal or 
$(-+-)$-bimodal; see Figure \ref{fig:graphs} for these possibilities. This  classification, outlined in \cite[\S10]{MR1246482}, is apparent in Figure \ref{fig:main}.

\textbf{Caution.} To avoid any confusion, we should point out that we will treat these ``regions'' as closed subsets of 
$\mathcal{M}_2(\Bbb{R})-\mathcal{S}(\Bbb{R})$; so the polynomial line $\sigma_1=2$ and the $f(c_1)=c_2$
line $\sigma_1=-6$ are in more than one of these regions. In case that we want to avoid the boundary lines between them, we use the term ``open region'' instead.

According to Proposition \ref{Julia circle}, the real entropy is ${\rm{log}}(2)$ in covering components. Among the other cases, the real entropy vanishes for monotonic maps. Hence, from the entropy perspective, the only interesting regions of the moduli space $\mathcal{M}_2(\Bbb{R})$ are unimodal and bimodal regions. 

 In view of this partitioning of the real moduli space, it would be useful for our purposes to have a general way to get from the monotonicity of a continuous function to the monotonicity of its restrictions and vice versa. This is the content of the  following straightforward point-set topology lemma that concludes this section:
\begin{lemma}\label{point-set topology}
Let $X$ be a topological space and $h:X\rightarrow\Bbb{R}$ a continuous function. 
\begin{enumerate}
\item If $h$ is monotonic, then so is its restriction $h\restriction_{A}$ to a closed subspace $A$ provided that the smaller restriction $h\restriction_{\partial A}$ is monotonic.
\item Assuming that $X$ is written as the union $A\cup B$ of two arbitrary subsets,  the monotonicity of $h:X\rightarrow\Bbb{R}$  can be inferred from that of 
$h\restriction_A$ and $h\restriction_B$ 
provided that $h(A\cap B)=h(A)\cap h(B)$. 
\end{enumerate}
\end{lemma}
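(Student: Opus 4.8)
The plan is to verify directly the definition of monotonicity, namely that every level set $L_c:=h^{-1}(c)$ (for $c\in\Bbb{R}$) of the relevant function is connected, and to reduce each statement to a comparison of the level sets of $h$ with those of its restrictions. Throughout I write $L_c^{S}:=S\cap h^{-1}(c)$ for the level set of $h\restriction_S$ at height $c$, so that monotonicity of $h\restriction_S$ means exactly that every $L_c^S$ is connected (an empty set or a singleton being connected by convention). Here $\partial A$ is the topological boundary of $A$ in $X$.

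For part (1), with $A$ closed I would decompose $X=\mathrm{int}(A)\sqcup\partial A\sqcup(X\setminus A)$ and argue by contradiction: assume some $L_c^A$ admits a separation $L_c^A=P\sqcup Q$ into nonempty subsets that are relatively clopen in $L_c^A$. Since $L_c^{\partial A}$ is connected by hypothesis, it lies entirely in $P$ or in $Q$; say $L_c^{\partial A}\subseteq P$. Then $Q$ meets neither $\partial A$ nor $X\setminus A$, so $Q\subseteq L_c\cap\mathrm{int}(A)$. The crux is to promote the clopen decomposition of $L_c^A$ to one of the ambient level set $L_c$: because $A$ is closed, $L_c^A=A\cap L_c$ is closed in $L_c$, whence $Q$ (closed in $L_c^A$) is closed in $L_c$; and because $Q$ is relatively open in $L_c^A$ and contained in the open set $\mathrm{int}(A)$, one checks $Q=(U\cap\mathrm{int}(A))\cap L_c$ for a suitable open $U\subseteq X$, so $Q$ is also open in $L_c$. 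Thus $Q$ is a nonempty proper clopen subset of $L_c$, contradicting the monotonicity of $h$. (If $L_c^{\partial A}=\varnothing$ the same computation shows $L_c^A$ itself is clopen in $L_c$, hence equals $\varnothing$ or $L_c$.) The main obstacle here is precisely this descent step --- verifying that the piece $Q$ disjoint from the boundary is open, not merely closed, in $L_c$; the hypothesis on $\partial A$ is used exactly to quarantine the boundary on one side so that the other side falls inside $\mathrm{int}(A)$ and becomes clopen.

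For part (2), I would argue directly. Fix $c$ and write $L_c=L_c^A\cup L_c^B$; both $L_c^A$ and $L_c^B$ are connected by the monotonicity of the restrictions. A union of two connected sets is connected as soon as they intersect or one of them is empty, and $L_c^A\cap L_c^B=A\cap B\cap L_c$ is nonempty exactly when $c\in h(A\cap B)$. Now split on membership of $c$: if $c\in h(A)\cap h(B)$, then the hypothesis $h(A\cap B)=h(A)\cap h(B)$ forces $c\in h(A\cap B)$, so the two connected pieces overlap and $L_c$ is connected; if $c$ lies in at most one of $h(A),h(B)$, then one of $L_c^A,L_c^B$ is empty and $L_c$ coincides with the other, again connected; and if $c\notin h(A)\cup h(B)=h(X)$ then $L_c=\varnothing$. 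This exhausts all cases, so every level set of $h$ is connected. The only subtlety is that the always-valid inclusion $h(A\cap B)\subseteq h(A)\cap h(B)$ does not suffice --- it is the reverse inclusion supplied by the hypothesis that guarantees a common point of $A$ and $B$ at height $c$, hence the overlap that glues the two connected level sets together.
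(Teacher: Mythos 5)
Your proof is correct and follows essentially the same route as the paper's: for part (1) you separate a putative clopen decomposition of $h^{-1}(c)\cap A$ using the connectedness of $h^{-1}(c)\cap\partial A$, push the boundary-free piece into ${\rm int}(A)$, and promote it to a clopen subset of the connected set $h^{-1}(c)$; for part (2) you glue the two connected level sets along a common point supplied by the hypothesis $h(A\cap B)=h(A)\cap h(B)$. The only difference is presentational (your argument is by contradiction where the paper's is direct), so there is nothing further to add.
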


\begin{proof}
For the first claim, we should show that for any real number $c$ the space $h^{-1}(c)\cap A$ is connected. If this space is written as $A_1\sqcup A_2$ with $A_1,A_2$ open (and hence closed) subsets of $A$ and $h^{-1}(c)$, then either $A_1\cap\partial A$ or 
$A_2\cap\partial A$ should be empty as $h^{-1}(c)\cap\partial A$ is connected. If for instance $A_1\cap\partial A=\emptyset$, then $A_1\subseteq{\rm{int}}(A)$; so the open subset $A_1$ of $h^{-1}(c)\cap A$ would be contained in 
$h^{-1}(c)\cap{\rm{int}}(A)$  which is itself open in $h^{-1}(c)$.  Hence $A_1$ is a  simultaneously open and closed subset of  the connected subspace $h^{-1}(c)$. Therefore, it is either vacuous or contains the whole $h^{-1}(c)\cap A$.\\
\indent
For the second part, just notice that a level set $h^{-1}(c)$ may be written as the union of connected subspaces 
$h^{-1}(c)\cap A, h^{-1}(c)\cap B$. If one of them is vacuous, we are done. Otherwise, $c\in h(A)\cap h(B)$ and thus, by the assumption, the connected subspaces $h^{-1}(c)\cap A$ and $h^{-1}(c)\cap B$  should have a point of $A\cap B$ in common; therefore, their union $h^{-1}(c)$ must be connected as well. 
\end{proof}

\section{Hyperbolic components}\label{S3}
In studying the entropy behavior of families, the notion of hyperbolicity appears naturally; e.g., the entropy of the quadratic polynomial family \eqref{quadratic family}, although increasing, is constant over the ``hyperbolic windows''. We establish an analogous result for quadratic rational maps in this section. In particular, the constancy of $h_\Bbb{R}$ over the 
\textit{escape component} will be useful later due to the fact that it reduces the discussion to the part of $\mathcal{M}_2(\Bbb{R})$ where the Julia set is connected, and this sets the stage for invoking the theory of polynomial-like maps in \S\ref{S6}. 

\subsection{Entropy behavior over real hyperbolic components}\label{S3.1}
There is a discussion in \cite[\S7]{MR1246482} on \textit{hyperbolic components} of the moduli space of quadratic rational maps. Recall that a rational map is called hyperbolic if each of its critical orbits converges to an attracting periodic orbit; see 
\cite[Theorem 19.1]{MR2193309} for equivalent characterizations.  It turns out that there are four different classes of hyperbolic quadratic maps:
\begin{itemize}
\item\textbf{Type B: Bitransitive.} Both critical orbits converge to the same attracting periodic orbit but critical points are in immediate basins of different points of this orbit. 
\item\textbf{Type C: Capture.} Only one critical point lies in the immediate basin of an attracting periodic point and the other critical orbit eventually lands there.
\item\textbf{Type D: Disjoint Attractors.} The critical orbits converge to distinct periodic orbits.
\item\textbf{Type E: Escape.} Both critical orbits converge to the same attracting fixed point. 
\end{itemize}
  The paper \cite{MR1047139} investigates the topological types of hyperbolic components in the 
\textit{critically marked moduli space} $\mathcal{M}_2^{cm}(\Bbb{C})$. The corresponding topological types in the unmarked space $\mathcal{M}_2(\Bbb{C})$ can be deduced from that: every component of type \textbf{B}, \textbf{C} or \textbf{D} is homeomorphic to the open disk in $\Bbb{R}^4$; cf. \cite[p. 53]{MR1246482}. The escape case is in stark difference with the other types of hyperbolic components: there is a single hyperbolic component of type \textbf{E} denoted by $E\subset\mathcal{M}_2(\Bbb{C})$ which, unlike other components, does not admit a natural \textit{center}, i.e. a critically periodic map; and is homeomorphic to $\Bbb{D}\times (\Bbb{C}-\overline{\Bbb{D}})$ where $\Bbb{D}$ is the open unit disk in the plane \cite[Lemma 8.5]{MR1246482}.\\
\indent
A natural question related to monotonicity now arises: What is the behavior of $h_\Bbb{R}$ on the intersection of a hyperbolic component with the real locus? Is such a \textit{real hyperbolic component} necessarily included in a single isentrope?  
 The real locus $\mathcal{M}_2(\Bbb{R})\cong\Bbb{R}^2$ is the set of fixed points of an involution on 
$\mathcal{M}_2(\Bbb{C})\cong\Bbb{C}^2$ which takes $\left(\sigma_1,\sigma_2\right)$ to $(\bar{\sigma}_1,\bar{\sigma}_2)$, a transformation which on the level of rational maps is induced by conjugating coefficients in ${\rm{Rat}}_2(\Bbb{C})$. 
For future references, we record this involution as acting not only on rational maps but on any continuous transformation
 $h:\hat{\Bbb{C}}\rightarrow\hat{\Bbb{C}}$. The involution will henceforth be denoted by
\begin{equation}\label{involution}
h\mapsto \left(\tilde{h}:z\mapsto \overline{h(\bar{z})}\right).
\end{equation}
Of course, for a rational map $h\in\Bbb{C}(z)$, this is just the rational map $\bar{h}$ obtained from conjugating the coefficients. It is easy to check that $h\mapsto\tilde{h}$ respects the composition and the ring structure of the set of $\Bbb{C}$-valued functions on the Riemann sphere;
takes homeomorphisms to homeomorphisms; and finally, commutes with differential operators $\frac{\partial}{\partial z}$
and $\frac{\partial}{\partial\bar{z}}$:
\begin{equation}\label{differentiation}
\frac{\partial\tilde{h}}{\partial z}=\widetilde{\frac{\partial h}{\partial z}},\quad
\frac{\partial\tilde{h}}{\partial\bar{z}}=\widetilde{\frac{\partial h}{\partial\bar{z}}}.
\end{equation}

\begin{proposition}\label{classification of components}
A non-empty intersection with $\mathcal{M}_2(\Bbb{R})$  of a hyperbolic component of type \textbf{B}, \textbf{C} or \textbf{D} in $\mathcal{M}_2(\Bbb{C})$ is connected while the intersection of the hyperbolic component  $E\subset\mathcal{M}_2(\Bbb{C})$ of 
type \textbf{E} with the real locus has two connected components. In the former situation, the center of the component belongs to $\mathcal{M}_2(\Bbb{R})$ as well.
\end{proposition}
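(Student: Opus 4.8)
The plan is to exploit the anti-holomorphic involution $\iota$ on $\mathcal{M}_2(\Bbb{C})\cong\Bbb{C}^2$ induced by the operation $f\mapsto\tilde f=\bar f$ of \eqref{involution}, which in the $(\sigma_1,\sigma_2)$-coordinates is simply $(\sigma_1,\sigma_2)\mapsto(\bar\sigma_1,\bar\sigma_2)$ and whose fixed-point locus is $\mathcal{M}_2(\Bbb{R})\cong\Bbb{R}^2$. Since $f\mapsto\bar f$ conjugates the dynamics of $f$ by the homeomorphism $z\mapsto\bar z$ of $\hat{\Bbb{C}}$, a map is hyperbolic if and only if its image under $\iota$ is, and the combinatorial type (\textbf{B}, \textbf{C}, \textbf{D} or \textbf{E}) of the critical-orbit dynamics is preserved. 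Consequently $\iota$ permutes the hyperbolic components of $\mathcal{M}_2(\Bbb{C})$, carrying each one to a component of the same type. First I would record that if a hyperbolic component $H$ meets $\mathcal{M}_2(\Bbb{R})$ in a point $p$, then $p=\iota(p)\in H\cap\iota(H)$; as distinct hyperbolic components are disjoint, this forces $\iota(H)=H$, so that $H\cap\mathcal{M}_2(\Bbb{R})$ is exactly the fixed set $\mathrm{Fix}\!\left(\iota\restriction_H\right)$. For the unique component $E$ of type \textbf{E} the same conclusion is automatic, since $\iota(E)$ is again a type \textbf{E} component and there is only one.

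Next I would settle the statement about the center. For a component $H$ of type \textbf{B}, \textbf{C} or \textbf{D} the center is the unique critically finite (superattracting) map it contains. Because $\iota$ sends critically finite maps to critically finite maps and preserves $H$, the image of the center under $\iota$ is again a center of $H$; uniqueness then forces the center to be $\iota$-fixed, i.e.\ it lies in $\mathcal{M}_2(\Bbb{R})$. This already disposes of the final assertion of the proposition.

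It remains to determine the topology of $\mathrm{Fix}\!\left(\iota\restriction_H\right)$, and for this I would invoke the explicit parametrizations of these components. Milnor and Rees parametrize each component of type \textbf{B}, \textbf{C} or \textbf{D} by an open ball in $\Bbb{R}^4=\Bbb{C}^2$, and the escape component by $\Bbb{D}\times(\Bbb{C}-\overline{\Bbb{D}})$, through dynamically natural coordinates built from the multiplier of the attracting cycle and from the Koenigs/Böttcher linearization that governs the position of the free critical orbit. The crucial point is that these coordinates are equivariant for $\iota$: conjugating a map by $z\mapsto\bar z$ replaces its attracting cycle and its linearizing coordinate by their complex conjugates, where the identities \eqref{differentiation} guarantee that the holomorphic linearization of $\bar f$ is the conjugate of that of $f$, so that $\iota$ acts on the model coordinates as ordinary complex conjugation. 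Hence for types \textbf{B}, \textbf{C}, \textbf{D} the fixed set is the real slice of an open $4$-ball, namely an open $2$-disk, which is connected; while for $E$ the fixed set is $(-1,1)\times\left\{x\in\Bbb{R}:|x|>1\right\}$, the product of a connected interval with the disconnected set $(-\infty,-1)\cup(1,\infty)$, hence has exactly two connected components.

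The main obstacle I anticipate is precisely the verification of this $\iota$-equivariance of the canonical parametrizations: that the multiplier and linearization coordinates transform under conjugation and that, in the type \textbf{D} case where two distinct attracting cycles are present, the possible interchange of the two critical-orbit coordinates under $\iota$ (or the passage to the unmarked moduli space) does not disconnect the real slice. Once equivariance is secured, the reduction to the elementary computation of the fixed set of complex conjugation on the model domains is routine, and the two displayed topological conclusions follow immediately.
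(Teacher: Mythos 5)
Your first two steps (that a hyperbolic component meeting $\mathcal{M}_2(\Bbb{R})$ is $\iota$-invariant, and that uniqueness of the PCF map in such a component forces the center to be real) are essentially identical to the paper's argument for the last assertion. The problem is the topological core of the proposition. You reduce everything to the claim that the Rees--Milnor parametrizations of the hyperbolic components are $\iota$-equivariant, i.e.\ that the homeomorphisms $H\cong\Bbb{D}^4$ (types \textbf{B}, \textbf{C}, \textbf{D}) and $E\cong\Bbb{D}\times(\Bbb{C}-\overline{\Bbb{D}})$ conjugate $\iota$ to coordinatewise complex conjugation, so that $\mathrm{Fix}(\iota\restriction_H)$ is the standard real slice of the model. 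You correctly identify this as the main obstacle, but you do not close it, and it is not a routine verification: the coordinates in \cite{MR1047139} are built in the \emph{critically marked} moduli space from choices (a marking of the critical points, a base point on the attracting cycle, a normalization of the linearizer) that need not be made compatibly with $z\mapsto\bar z$, and one must then pass to the unmarked quotient, where $\iota$ could in principle act on the model by an antiholomorphic involution other than the standard one. Knowing only that $H$ is homeomorphic to a ball tells you nothing about the fixed set of an arbitrary involution on it without further input, so as written the argument for connectedness (and for ``exactly two components'' of $E\cap\mathcal{M}_2(\Bbb{R})$) is incomplete.

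The paper sidesteps this entirely with Smith theory: for an involution on a space with the homotopy type of a point (resp.\ $S^1$), the fixed set has the mod $2$ \v Cech cohomology of a point (resp.\ of an $r$-sphere with $r\leq 1$). This uses only the homotopy type of the component, with no equivariance of any parametrization, and immediately gives connectedness for types \textbf{B}, \textbf{C}, \textbf{D} and \emph{at most} two components for $E$. Note also that even granting your equivariance claim you would get exactly two components for $E$ for free, whereas the paper still has to do a little work at this point: it invokes Theorem \ref{entropy constant over hyperbolic} (constancy of $h_\Bbb{R}$ on real hyperbolic components) together with the two distinct entropy values $0$ and $\log(2)$ realized on the real rays $c>\frac14$ and $c<-2$ of the polynomial line to rule out $E\cap\mathcal{M}_2(\Bbb{R})$ being connected. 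If you want to salvage your route, you would need to actually prove the equivariance of the model coordinates (e.g.\ by checking how the multiplier and linearizing coordinates transform under $f\mapsto\bar f$ via \eqref{differentiation} and handling the marking ambiguity); otherwise the Smith-theoretic argument is the cleaner and complete one.
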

\begin{proof}
Following the suggestion on \cite[p. 15]{MR1181083}, one can invoke the \textit{Smith theory} from  algebraic topology to address this question:  by  \cite[chap. III, Theorem 7.11]{MR0413144},   the set of fixed points of an involution acting on a space homotopy equivalent to the sphere $S^n$ (resp. to a point) has the mod $2$ \v Cech cohomology of an $r$-sphere (resp. of a point) where $-1\leq r\leq n$, and $r=-1$ means there is no fixed point. 
Consequently, the real locus of any hyperbolic component of type \textbf{B}, \textbf{C} or \textbf{D} which intersects $\mathcal{M}_2(\Bbb{R})$ is connected due to the fact that these complex components are (as mentioned above) homeomorphic to a disk and hence contractible. On the other hand, for the type \textbf{E}  component 
$E\cong \Bbb{D}\times (\Bbb{C}-\overline{\Bbb{D}})$ which is of the homotopy type of $S^1$, the fixed point set  $E\cap\mathcal{M}_2(\Bbb{R})$ can have at most two connected components.  
We shall see shortly in Theorem \ref{entropy constant over hyperbolic} that $h_\Bbb{R}$ must be constant on any connected component of $E\cap\mathcal{M}_2(\Bbb{R})$.  One can easily check that different entropy values actually come up: 
the complement of the Mandelbrot set with respect to  the real line consists of two rays $\left(\frac{1}{4},\infty\right)$ and $(-\infty,-2)$; the Julia set of $x^2+c$ is completely real when $c<-2$ and is disjoint from the real axis for $c>\frac{1}{4}$ (compare with \cite[Problems 4-e and 4-f]{MR2193309} and Proposition \ref{escape locus}) with the values of real entropy given by $\log(2)$ and $0$   respectively. Consequently, the number of connected components of $E\cap\mathcal{M}_2(\Bbb{R})$ is precisely two. \\
\indent
For the last part, notice that the involution \eqref{involution} preserves the hyperbolicity and the post-critical finiteness of rational maps. Hence, given a hyperbolic component $\mathcal{U}\subset\mathcal{M}_2(\Bbb{C})$, its image $\widetilde{\mathcal{U}}$ under the involution is another complex hyperbolic component that should coincide with the original one if intersects it or equivalently, if $\mathcal{U}\cap\mathcal{M}_2(\Bbb{R})\neq\emptyset$. The complex conjugate of the center of $\mathcal{U}$ (if exists) is a PCF map in $\tilde{\mathcal{U}}$,
and hence when $\mathcal{U}=\tilde{\mathcal{U}}$  it should be the same as the center of $\mathcal{U}$ and thus real due to the fact that a hyperbolic component of $\mathcal{M}_2(\Bbb{C})$ contains at most one PCF map. 
\end{proof}

Now, we address the values of $h_\Bbb{R}$ on real hyperbolic components. We are only concerned with those hyperbolic components that intersect the component of degree zero maps in $\mathcal{M}_2(\Bbb{R})-\mathcal{S}(\Bbb{R})$ as 
$h_\Bbb{R}\equiv \log(2)$ outside that. 
\begin{theorem}\label{entropy constant over hyperbolic}
The function $h_\Bbb{R}$ is constant over each connected component of the intersection of a complex hyperbolic component  with the component of degree zero maps in $\mathcal{M}_2(\Bbb{R})-\mathcal{S}(\Bbb{R})$ with a value which is the logarithm of an algebraic number. 
\end{theorem}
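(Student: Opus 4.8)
The statement has two halves: that $h_\Bbb{R}$ is locally constant on the hyperbolic locus, and that the resulting constant is $\log$ of an algebraic number. The plan is to get the first from structural stability of hyperbolic maps --- realized as a holomorphic motion of Julia sets that I will arrange to respect the real circle $\hat{\Bbb{R}}$ --- and the second by evaluating the now-constant entropy at a post-critically finite map whose real restriction is Markov.

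\textbf{Constancy.} Fix the complex hyperbolic component $\mathcal{U}$ and a real basepoint $f_0$ in the connected component $W$ at hand. Throughout $\mathcal{U}$ the repelling cycles persist with multiplier $\neq 1$, so they admit unique holomorphic continuations; by the $\lambda$-lemma these extend to a holomorphic motion $H:\mathcal{U}\times\mathcal{J}(f_0)\to\hat{\Bbb{C}}$ based at $f_0$ whose slice $\phi_f:=H(f,\cdot)$ is a homeomorphism $\mathcal{J}(f_0)\to\mathcal{J}(f)$ conjugating $f_0$ to $f$. Being forced on the dense set of repelling periodic points, this motion is unique. To see it respects $\hat{\Bbb{R}}$, recall (Proposition \ref{classification of components}) that $\mathcal{U}$ meets the real locus, hence is invariant under the involution \eqref{involution}; writing $\iota(z)=\bar z$, the assignment $f\mapsto\iota\circ\phi_f\circ\iota$ is then a holomorphic motion in the conjugate parameter, conjugating $\tilde f_0=f_0$ to $\tilde f$. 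Uniqueness identifies it with $\phi_{\tilde f}$, and specializing to real $f$ gives $\iota\circ\phi_f\circ\iota=\phi_f$, i.e. $\phi_f(\bar z)=\overline{\phi_f(z)}$. Thus $\phi_f$ maps $\mathcal{J}(f_0)\cap\hat{\Bbb{R}}$ homeomorphically onto $\mathcal{J}(f)\cap\hat{\Bbb{R}}$ and conjugates the real dynamics there. Since (as recalled before Lemma \ref{criterion}) the entropy-carrying part of the non-wandering set of $f\restriction_{\hat{\Bbb{R}}}$ sits inside $\mathcal{J}(f)\cap\hat{\Bbb{R}}$, one has $h_\Bbb{R}(f)=h_{\rm top}\!\left(f\restriction_{\mathcal{J}(f)\cap\hat{\Bbb{R}}}\right)$, and the conjugacy yields $h_\Bbb{R}(f)=h_\Bbb{R}(f_0)$ along any real path in the connected set $W$.

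\textbf{Algebraicity.} It now suffices to compute $h_\Bbb{R}$ at a single well-chosen map of $W$. For types \textbf{B}, \textbf{C}, \textbf{D} the center is post-critically finite and real (Proposition \ref{classification of components}); lying in the degree zero part it has real critical points, and its finite post-critical set cuts $f(\hat{\Bbb{R}})$ into finitely many monotone laps that $f$ permutes combinatorially, i.e. a Markov partition. By the identification of entropy with the growth rate of lap numbers \cite{MR579440}, the value is $\log\rho(A)$ for the non-negative integer incidence matrix $A$ of the partition, and $\rho(A)$, a root of $\det(tI-A)\in\Bbb{Z}[t]$, is algebraic. The escape component $E$ has no center and its real slice is an annulus rather than a disk; there I would instead invoke the explicit dynamics of Proposition \ref{escape locus} (cf. the computation inside the proof of Proposition \ref{classification of components}), which shows both that $h_\Bbb{R}$ is constant on each of the two real components and that its value is $0=\log 1$ or $\log 2$ --- again $\log$ of an algebraic number.

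\textbf{Main obstacle.} The crux is the equivariance of the holomorphic motion: everything hinges on $\phi_f$ preserving $\hat{\Bbb{R}}$, which I deduce from the uniqueness of the Julia-set motion together with the fact that the involution \eqref{involution} turns holomorphic parameter-dependence into anti-holomorphic dependence. A secondary issue is that a clean Markov model requires a post-critically finite map inside $W$; should $\mathcal{U}\cap\mathcal{M}_2(\Bbb{R})$ meet the symmetry locus, its center can fall outside the degree zero part and this evaluation is unavailable. To cover that case uniformly I would argue by kneading theory: for a hyperbolic map the critical orbits are absorbed by attracting cycles in the Fatou set, so the critical itineraries are eventually periodic, the Milnor--Thurston kneading determinant is a rational function over $\Bbb{Z}$, and the reciprocal of its least positive zero --- which equals $e^{h_\Bbb{R}}$ --- is algebraic.
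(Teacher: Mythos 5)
Your argument is correct, but it proves the theorem by a genuinely different mechanism than the paper. The paper's proof is almost purely soft: it first invokes Lemma \ref{algebraic} to conclude that $\exp(h_\Bbb{R})$ is algebraic at \emph{every} point of the real hyperbolic locus (since all turning points of the real restriction are absorbed by attracting cycles), then observes that by continuity the image of a connected component under $h_\Bbb{R}$ is an interval, and an interval consisting entirely of logarithms of algebraic numbers --- a countable set --- must be a singleton. Constancy is thus a \emph{consequence} of pointwise algebraicity plus connectedness, with no structural stability needed. You instead prove constancy directly, via the conjugation-equivariant holomorphic motion of Julia sets (your symmetry argument $\iota\circ\phi_f\circ\iota=\phi_{\tilde f}$ is sound and correctly reduces $h_\Bbb{R}(f)$ to the conjugacy class of $f\restriction_{\mathcal{J}(f)\cap\hat{\Bbb{R}}}$), and only then compute the value at a single map. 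This buys you strictly more --- a topological conjugacy of the real Julia dynamics across the component, not merely equality of entropy --- at the cost of importing the Ma\~n\'e--Sad--Sullivan machinery. On the algebraicity half, your Markov-partition evaluation at the center is clean when the center lies in the real degree-zero part, and you correctly identify the cases where it does not (centerless escape component; center falling outside the relevant real component); but your uniform fallback, eventually periodic kneading data forcing the kneading determinant to be a rational function over $\Bbb{Z}$, is precisely the content and proof of the paper's Lemma \ref{algebraic}, so in the end you do not avoid that input, you re-derive it. One cosmetic slip: the real slice of $E$ is not asserted to be an annulus in the paper, only to have two connected components; nothing in your argument depends on this, since Proposition \ref{escape locus} supplies the values $0$ and $\log 2$ regardless.
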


\begin{proof}
{Consider the intersection of a hyperbolic component in $\mathcal{M}_2(\Bbb{C})$ with the real locus (or a connected component of such an intersection in the case of the component $\textbf{E}$) and denote this open subset by $\mathcal{U}$. 
The value of $h_\Bbb{R}$ at any point of $\mathcal{U}-\mathcal{S}(\Bbb{R})$ has to be the logarithm of an algebraic number; see Lemma \ref{algebraic} below.  By continuity, $h_\Bbb{R}$ maps any connected component of the intersection of $\mathcal{U}$ with the component of degree zero maps to an interval while the values attained by $h_\Bbb{R}$
on this connected component are logarithms of algebraic numbers. We conclude that this interval is a singleton. Finally, we need to show that if $\mathcal{S}(\Bbb{R})$ crosses
 $\mathcal{U}$, then these constant values of $h_\Bbb{R}$ on connected components of the intersection of $\mathcal{U}$
 with the component of degree zero maps coincide. This is due to the fact that $h_\Bbb{R}\to 0$ as one tends to $\mathcal{S}(\Bbb{R})$ along maps of degree zero; cf.  Example \ref{symmetry locus entropy}. So $h_\Bbb{R}$ would vanish over the intersection of $\mathcal{U}$ with the component of degree zero maps once the intersection is disconnected. }
\end{proof}

The proof above relied on the well known fact that the entropy of a hyperbolic map is the logarithm of an algebraic number.
\begin{lemma}\label{algebraic}
Let $I$ be an interval $[a,b]$ or a circle. Let $f:I\rightarrow I$ be a continuous multimodal self-map of $I$ whose turning points are attracted by periodic orbits. Then the number $\exp\left(h_{\rm{top}}(f)\right)$ is algebraic.
\end{lemma}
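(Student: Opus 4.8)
The plan is to deduce the algebraicity of $\exp\left(h_{\rm{top}}(f)\right)$ from the kneading theory of Milnor and Thurston \cite{MR970571}. First I would invoke the Misiurewicz--Szlenk theorem \cite{MR579440} recalled in the notation section: for a piecewise-monotone map one has $h_{\rm{top}}(f)=\log s$, where $s\geq 1$ is the exponential growth rate of the lap numbers $\ell\left(f^n\right)$ of the iterates. Thus $\exp\left(h_{\rm{top}}(f)\right)=s$, and it suffices to prove that the growth number $s$ is algebraic. If $s=1$ the entropy vanishes and there is nothing to prove, so I assume $s>1$.

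Next I would set up the kneading matrix $N(t)$ and the kneading determinant $D(t)$ attached to the itineraries of the turning points $c_1,\dots,c_m$ of $f$; here the $i$-th row of $N(t)$ is the formal power series in $t$ recording the addresses (which lap, or which turning point) visited by the forward orbit of $c_i$. The Milnor--Thurston theory expresses the lap-counting generating function $L(t)=\sum_{n\geq 0}\ell\left(f^n\right)\,t^n$ in terms of $D(t)$ and identifies the radius of convergence $\tfrac{1}{s}$ of $L$ with the smallest zero of $D(t)$ in the interval $\left(0,1\right]$.

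The crux of the argument is the observation that each turning point's itinerary is eventually periodic. Indeed, by hypothesis the forward orbit of every $c_i$ converges to an attracting periodic orbit $O_i$; for $n$ large the iterate $f^n(c_i)$ lies in an arbitrarily small neighborhood of $O_i$, so the sequence of laps it visits is ultimately periodic with period dividing that of $O_i$ (the only subtlety being the case in which $c_i$ is itself preperiodic, landing on a super-attracting cycle through the turning set, in which case its address sequence is literally eventually periodic). An eventually periodic symbol sequence has a rational generating function, so every entry of $N(t)$, and therefore $D(t)$, is a rational function of $t$ with rational coefficients. A nonzero rational function has finitely many zeros, each of them algebraic; hence the distinguished zero $t^\ast=\tfrac{1}{s}$ is algebraic and so is $s=\tfrac{1}{t^\ast}$, proving that $\exp\left(h_{\rm{top}}(f)\right)$ is algebraic.

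The main obstacle, and the step demanding the most care, is the precise Milnor--Thurston dictionary between the smallest positive zero of the kneading determinant and the reciprocal growth number $\tfrac{1}{s}$, together with the bookkeeping of addresses when a turning point is periodic; and, since the lemma allows $I$ to be a circle, the adaptation of the interval kneading formalism to circle maps, where one must cut the circle and keep track of the degree of $f$. This circle version is exactly the technical content supplied in \cite{2018arXiv180304082F}, on which I would rely for the remaining details.
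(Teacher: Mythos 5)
The paper does not prove this lemma itself --- it defers to \cite{2018arXiv180304082F}, noting only that the proof there goes ``via the kneading theory'' --- and your argument (the hypothesis forces the turning points' itineraries to be eventually periodic, hence the entries of the Milnor--Thurston kneading matrix and the kneading determinant $D(t)$ are rational functions, so the distinguished zero $t^\ast=1/s$ of $D$ and therefore $s=\exp\left(h_{\rm{top}}(f)\right)$ are algebraic) is precisely that kneading-theoretic route. Your sketch is correct, and the points you flag as requiring care (the address bookkeeping at periodic or super-attracting turning points, and the adaptation of the formalism to circle maps) are exactly the technical details that the cited reference supplies.
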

\noindent
A proof can be found for instance in \cite{filom2021real}.

\subsection{Real escape components}\label{S3.2}
The escape locus plays an important role in studying the dynamics of quadratic rational maps since, in the absence of parabolic fixed points, the Julia set of such a map is connected unless it belongs to the escape locus in which case the dynamics on the Julia set can be modeled by a one-sided $2$-shift \cite[Lemma 8.2]{MR1246482}. Switching to the real part of $E$, as mentioned in the discussion before Proposition \ref{classification}, $E\cap\mathcal{M}_2(\Bbb{R})$ has two components associated with real entropy values $0$ and $\log(2)$. Thinking about the complement of the Mandelbrot set in the polynomial line $\sigma_1=2$, 
it is clear that in Figure \ref{fig:main} the $h_\Bbb{R}\equiv 0$ \textit{escape component} lies above the dotted line 
$${\rm{Per}}_1(1): \sigma_2=2\sigma_1-3$$
of parabolic parameters while 
the $h_\Bbb{R}\equiv \log(2)$ \textit{escape component} lies below that; also see   \cite[Figure 16]{MR1246482}.
Not only the real entropy, but even the dynamics on the real circle itself is easy to describe in each of these cases:
\begin{proposition}\label{escape locus}
For a real quadratic rational map in the escape locus, the Julia set is either entirely contained in $\hat{\Bbb{R}}$ or completely disjoint from it. 
\end{proposition}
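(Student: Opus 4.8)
The plan is to use that on the escape locus the map is hyperbolic with a totally disconnected Julia set, which reduces the whole statement to a count of real periodic points that is rigid across a hyperbolic component. Recall that $E$ consists of hyperbolic maps, so $\mathcal{J}(f)$ is a Cantor set on which $f$ is uniformly expanding, and $\mathcal{J}(f)=\overline{\{\text{repelling periodic points of }f\}}$. By Proposition \ref{classification of components} and the discussion preceding Theorem \ref{entropy constant over hyperbolic}, $E\cap\mathcal{M}_2(\Bbb{R})$ has exactly two connected components, on which $h_\Bbb{R}$ is identically $0$ and identically $\log(2)$; the former contains the classes $\langle z^2+c\rangle$ with $c>\frac14$ and the latter those with $c<-2$ (the two rays of $\Bbb{R}-\mathbf{M}$). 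I would prove the dichotomy on each component separately, using these polynomials as anchors.

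The main tool is the following rigidity: along any connected component of a real hyperbolic locus, the number of real points in $\mathrm{Fix}(f^n)$ is independent of $f$, for every $n$. Indeed, hyperbolicity forces every periodic point to have multiplier of modulus $\neq 1$, so the points of $\mathrm{Fix}(f^n)$ are simple roots of $f^n(z)-z$; these vary real-analytically with $f$ and can neither appear nor disappear without a root becoming multiple, which would produce a multiplier equal to $1$ and violate hyperbolicity. A real simple root can leave $\hat{\Bbb{R}}$ only by colliding with its complex conjugate on $\hat{\Bbb{R}}$, again a forbidden multiple root; hence real periodic points stay real and their count is locally, thus globally, constant on the component. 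For the $\log(2)$ component I then evaluate this count at $c<-2$: there $\mathcal{J}(z^2+c)\subset\Bbb{R}$, so all repelling cycles are real and, together with the real fixed point $\infty$, every periodic point is real. Rigidity propagates this to the whole component, whence $\mathcal{J}(f)=\overline{\{\text{repelling periodic points}\}}\subseteq\hat{\Bbb{R}}$. (Alternatively, since $h_\Bbb{R}(f)=\log(2)=h_{\mathrm{top}}(f)$, a measure of maximal entropy of $f\restriction_{\hat{\Bbb{R}}}$ is an $f$-invariant measure of maximal entropy on $\hat{\Bbb{C}}$, so by its uniqueness it equals $\mu_f$ and $\mathcal{J}(f)=\mathrm{supp}\,\mu_f\subseteq\hat{\Bbb{R}}$.)

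For the $h_\Bbb{R}\equiv 0$ component I evaluate the count at $c>\frac14$, where $x^2+c>x$ for all real $x$ forces every orbit to climb to the attracting fixed point $\infty$, so the only real periodic point is this attractor. By rigidity the same holds throughout the component: the unique attracting fixed point $p$ is the only real periodic point, and no repelling cycle is real. It remains to upgrade this from periodic points to the whole set $K:=\mathcal{J}(f)\cap\hat{\Bbb{R}}$, which is compact and forward invariant. If $K$ were finite it would consist of (real, repelling) periodic points, contradicting the count; if $K$ were infinite, then $f\restriction_K$ would be a uniformly expanding self-map of an infinite compact subset of the circle --- a hyperbolic repeller of the real-analytic map $f\restriction_{\hat{\Bbb{R}}}$ --- and would therefore carry positive entropy, contradicting $h_\Bbb{R}(f)=0$. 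Hence $K=\emptyset$. The step I expect to be most delicate is exactly this last one: ruling out an infinite $K$ whose points accumulate on no cycle. I would settle it by producing a finite Markov partition for the hyperbolic repeller $K$, identifying $f\restriction_K$ with a subshift of finite type and observing that an infinite such subshift necessarily has positive entropy; a more hands-on alternative is to use the uniform expansion and recurrence in $K$ to locate an arc $J\subset\hat{\Bbb{R}}$ and an iterate with $f^k(J)$ covering $J$ at least twice, yielding a horseshoe and hence $h_\Bbb{R}(f)\geq\frac{1}{k}\log 2>0$.
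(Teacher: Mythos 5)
Your approach is genuinely different from the paper's: the paper argues map-by-map, using the negative Schwarzian derivative and a covering-space count of preimage components to show that both critical values lie in the immediate real basin $I$ of the real attracting fixed point, whence $\hat{\Bbb{R}}-I$ is backward invariant and contains $\mathcal{J}(f)$ by Montel's theorem (Lemma \ref{criterion}) --- unless $I=\hat{\Bbb{R}}$, in which case $\mathcal{J}(f)$ misses $\hat{\Bbb{R}}$ entirely. Your treatment of the $h_\Bbb{R}\equiv\log(2)$ component is essentially sound: that component avoids $\mathcal{S}(\Bbb{R})$, the number of real points of ${\rm{Fix}}(f^{\circ n})$ is a locally constant conjugation invariant along it, and at $c<-2$ all $2^n+1$ of them are real, so $\mathcal{J}(f)$ is the closure of a set of real repelling cycles and lies in $\hat{\Bbb{R}}$.

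There are, however, two genuine gaps on the $h_\Bbb{R}\equiv 0$ side. First, constancy of the real periodic-point count on a connected component of $E\cap\mathcal{M}_2(\Bbb{R})$ is actually false for that component: it meets $\mathcal{S}(\Bbb{R})$ (the classes $\left\langle k\left(z+\frac{1}{z}\right)\right\rangle$ with $|k|>1$ lie in $E$) and hence also contains degree $\pm 2$ covering classes, for which \emph{every} periodic point is real. The count is an invariant of a real map, not of a point of $\mathcal{M}_2(\Bbb{R})$, so the rigidity must be run in ${\rm{Rat}}_2(\Bbb{R})$, and you then owe an argument that your given map with real critical points can be joined to $z^2+c$, $c>\frac{1}{4}$, through real hyperbolic maps with real critical points. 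Second, and more seriously, the final step fails as stated: an infinite compact forward-invariant set on which the map is uniformly expanding can perfectly well have zero entropy and no periodic points --- a Sturmian subshift sitting inside the full $2$-shift $\mathcal{J}(f)$ is exactly such a set --- so neither ``infinite implies positive entropy'' nor ``infinite SFT implies positive entropy'' is true, a compact invariant subset of a hyperbolic Julia set need not be of finite type, and the horseshoe construction has no basis for a Sturmian-like $K$. What actually rescues the step is that $K=\mathcal{J}(f)\cap\hat{\Bbb{R}}$ is \emph{locally maximal}: since $\hat{\Bbb{R}}$ is forward invariant and $f$ is expanding on $\mathcal{J}(f)$, a Julia orbit starting near but off $\hat{\Bbb{R}}$ drifts away from it, so $K=\bigcap_{n\geq 0}f^{-n}(N)\cap\mathcal{J}(f)$ for a small neighborhood $N$ of $\hat{\Bbb{R}}$; the closing/shadowing lemma for locally maximal expanding sets then yields a periodic point inside $K$ whenever $K\neq\emptyset$, giving the contradiction you want. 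Without isolating that ingredient, the zero-entropy half of your proof is incomplete.
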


\begin{proof}
{Let $f$ be  such a map. If it is a covering map, then the Julia set is contained in $\hat{\Bbb{R}}=\Bbb{R}\cup\{\infty\}$ by Proposition \ref{Julia circle}. So suppose both critical points are on the real axis. Thus both critical orbits converge to a real attracting fixed point $p$. Denote the immediate basin of $p$ for the map 
$f\restriction_{\hat{\Bbb{R}}}:\hat{\Bbb{R}}\rightarrow\hat{\Bbb{R}}$ by $I$. 
If this open subinterval of $\hat{\Bbb{R}}$ coincides with $\hat{\Bbb{R}}$, then all points of $\hat{\Bbb{R}}$ are Fatou and we are done. Otherwise, it is no loss of generality to take the immediate basin $I$ to be a real interval of the form
$(\alpha,\beta)\subset\Bbb{R}$. Clearly, each of $\alpha,\beta$ lands on an orbit of period at most two after at most one iteration. Invoking Lemma \ref{Schwarzian}, the Schwarzian derivative of $f$ is negative, so $(\alpha,\beta)$  contains at least one critical point and hence at least one of the critical orbits.  We now claim that both critical values are contained in 
$(\alpha,\beta)$. Otherwise, there is at most one critical value there, namely $v$, and the preimage of $(\alpha,\beta)-\{v\}$ under the two-sheeted ramified covering $f:\hat{\Bbb{C}}\rightarrow\hat{\Bbb{C}}$ must have four connected components; each of them must be homeomorphic to $(0,1)$ and must have the unique element of $f^{-1}(v)$ in its closure. Since the coefficients of $f$ are real, those components that are not contained in $\hat{\Bbb{R}}$ have to appear in conjugate pairs disjoint from $\hat{\Bbb{R}}$. 
There is exactly one such a pair as $f^{-1}\left((\alpha,\beta)-\{v\}\right)$ contains $(\alpha,\beta)-f^{-1}(v)$, and so cannot be  disjoint from $\hat{\Bbb{R}}$; and also cannot be completely contained in it since in that case we have four disjoint open real intervals whose closures share a point.  Consequently, the two real components of $f^{-1}\left((\alpha,\beta)-\{v\}\right)$ union the singleton set $f^{-1}(v)$ form an interval which is the real preimage 
$\left(f\restriction_{\hat{\Bbb{R}}}\right)^{-1}\left((\alpha,\beta)\right)$  of $I=(\alpha,\beta)$. This has to be contained in a component of the basin of $p$ under the map $f\restriction_{\hat{\Bbb{R}}}:\hat{\Bbb{R}}\rightarrow\hat{\Bbb{R}}$. But it already contains $p\in f^{-1}(p)$, so must be contained in the immediate basin $I=(\alpha,\beta)$. As a corollary, this interval is completely invariant for the map $f\restriction_{\hat{\Bbb{R}}}:\hat{\Bbb{R}}\rightarrow\hat{\Bbb{R}}$. This is a contradiction because the other critical orbit has to eventually land at this immediate basin while we have assumed the other critical value does not lie there.\\
\indent
It has been established so far that $I=(\alpha,\beta)$ has both of critical values. We claim that this indicates that the closed real subset ${\hat{\Bbb{R}}}-I$  is backward-invariant under the map $f:\hat{\Bbb{C}}\rightarrow\hat{\Bbb{C}}$, and so must contain the Julia set according to Lemma \ref{criterion}: As this closed interval does not contain any critical value of $f$, its preimage consists of two components homeomorphic to $[0,1]$ which, by the same argument, are either both real or complex conjugate and disjoint from $\hat{\Bbb{R}}$. The latter is impossible since at least one of the endpoints $\alpha$ or $\beta$ of ${\hat{\Bbb{R}}}-I$ has a real point in its preimage. Thus 
$f^{-1}\left({\hat{\Bbb{R}}}-I\right)\subset \hat{\Bbb{R}}$. As a matter of fact, $f^{-1}\left({\hat{\Bbb{R}}}-I\right)$ cannot intersect the $f$-invariant set $I$, and so is a subset of ${\hat{\Bbb{R}}}-I$. This concludes the proof.}
\end{proof}

\begin{remark}\label{higher degrees}
Proposition \ref{escape locus} is false for $d\geq 3$:  For $c>1$ and $d\geq 3$, $z=c$ is a repelling fixed point of the map $f(z)=z^d+c-c^d$ and hence the Julia set is not disjoint from the real axis while it cannot be totally real due to the fact that it is symmetric with respect to the rotation $z\mapsto{\rm{e}}^{\frac{2\pi{\rm{i}}}{d}}z$ which does not preserve $\hat{\Bbb{R}}$. For $c>1$ large enough, the unique finite critical value $c-c^d$ lies in the immediate basin of   the super-attracting fixed point at infinity, and the  map then belongs to the \textit{polynomial escape locus}. Notice that the induced real map $x\mapsto x^d+c-c^d$ has at most one turning point ($x=0$ for $d$ even) and its entropy is thus at most $\log(2)<\log(d)$.
\end{remark}

The description of the real dynamics in the real part of the escape locus in Proposition \ref{escape locus} results in a simple description of the boundary the real escape locus in the component of degree zero maps.
Milnor alludes to this  in  \cite[pp. 66,67]{MR1246482}. We include a short proof here:

\begin{proposition}\label{escape boundary}
In the component of degree zero maps in  $\mathcal{M}_2(\Bbb{R})-\mathcal{S}(\Bbb{R})$:
\begin{itemize}
\item the boundary of the component of the real escape locus
where $h_\Bbb{R}\equiv\log(2)$ lies on the post-critical curve $f^{\circ 2}(c)=f^{\circ 3}(c)$ ($c$ a critical point of $f$)
and the straight line ${\rm{Per}}_{1}(1)$;
\item the  $h_\Bbb{R}\equiv 0$ component of the intersection of the escape component $E\subset\mathcal{M}_2(\Bbb{C})$ with the component of degree zero maps   
 is one of the  quadrants cut by the straight lines 
$${\rm{Per}}_{1}(1): 2\sigma_1-\sigma_2=3,\quad {\rm{Per}}_{2}(1):2\sigma_1+\sigma_2=1;$$ 
therefore, in the natural $\left(\sigma_1,\sigma_2\right)$-coordinate system of $\mathcal{M}_2(\Bbb{R})$ the corresponding boundary is  piecewise-linear. 
\end{itemize}
\end{proposition}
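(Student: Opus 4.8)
The plan is to read off both boundaries from the dynamical dichotomy of Proposition \ref{escape locus} and then translate the relevant degeneracies into the $(\sigma_1,\sigma_2)$-coordinates via the multiplier cubic. I work throughout inside the component of degree zero maps, where both critical points are real and Lemma \ref{Schwarzian} is available. I treat the symmetry locus $\mathcal{S}(\Bbb{R})$ as the outer edge of this component — crossing it leaves the component of degree zero maps altogether and swaps the real representative for a covering map — so that the frontier at issue is only the \emph{internal} one separating the escape region from the remaining degree zero maps.

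For the $h_\Bbb{R}\equiv 0$ component, Proposition \ref{escape locus} says the Julia set is disjoint from $\hat{\Bbb{R}}$; revisiting its proof, this is exactly the case $I=\hat{\Bbb{R}}$, so the whole circle is the immediate basin of a real attracting fixed point $p$, and there is a unique real fixed point. The point is that one can only leave this open hyperbolic region when $p$ ceases to be attracting, i.e. when its real multiplier reaches $\pm 1$. Writing the three multipliers as the roots of $P(x)=x^3-\sigma_1x^2+\sigma_2x-(\sigma_1-2)$ (using $\sigma_3=\sigma_1-2$), I would compute
\[
P(1)=3-2\sigma_1+\sigma_2,\qquad P(-1)=1-2\sigma_1-\sigma_2,
\]
so that $\mu_p=1$ cuts out ${\rm{Per}}_1(1):2\sigma_1-\sigma_2=3$ and $\mu_p=-1$ cuts out ${\rm{Per}}_1(-1)={\rm{Per}}_2(1):2\sigma_1+\sigma_2=1$. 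These being straight lines gives the piecewise-linear frontier. To identify the region as the quadrant $C=\{2\sigma_1-\sigma_2<3,\ 2\sigma_1+\sigma_2>1\}$, observe that on $C$ one has $P(1)>0>P(-1)$, so $P$ has a real root in $(-1,1)$; since $C$ lies above ${\rm{Per}}_1(1)$, where exactly one fixed point is real (two real fixed points can only merge by becoming a common parabolic point of multiplier $1$, i.e. on ${\rm{Per}}_1(1)$), this root is $\mu_p$ and $p$ is attracting. The polynomial ray $\{\langle z^2+c\rangle\}_{c>\frac14}$ lies in $C$ and is of escape type with $h_\Bbb{R}=0$, pinning down the correct quadrant.

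The remaining — and I expect main — difficulty is the reverse inclusion: that the escape behavior persists over all of $C$ intersected with the component of degree zero maps, not merely near the polynomial ray. I would argue by a clopen/connectedness argument. The escape set with $h_\Bbb{R}=0$ is open, since hyperbolicity is open and ``the whole circle is the immediate basin'' is preserved under small perturbation. It is also relatively closed in $C\cap(\text{degree zero})$: a frontier map would lie on $\partial E$, where a critical orbit fails to converge to $p$; but by Lemma \ref{Schwarzian} every attracting or neutral cycle on $\hat{\Bbb{R}}$ absorbs a critical point, and on $C$ the only real fixed point $p$ has multiplier in $(-1,1)$ with no competing neutral cycle available, so no such degeneration can occur before one of the two lines is crossed. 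As $C\cap(\text{degree zero})$ is connected and meets the escape set, the two coincide.

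For the $h_\Bbb{R}\equiv\log(2)$ component the same proof of Proposition \ref{escape locus} furnishes the complementary picture: the immediate basin is a proper interval $I=(\alpha,\beta)$ containing both critical values, with $\alpha,\beta$ landing on a cycle of period at most two after at most one step, and the Julia set is a real Cantor set in $\hat{\Bbb{R}}-I$. One exit mechanism is $p$ becoming parabolic, $\mu_p\to 1$, landing on ${\rm{Per}}_1(1)$ exactly as above (this is also the wall between the two escape components). The other is a critical value reaching $\partial I$: since the endpoints are (pre)fixed, the image $f^{\circ 2}(c)$ of that critical value then hits a fixed point, giving the relation $f^{\circ 2}(c)=f^{\circ 3}(c)$ — transparent on the polynomial ray, where at $c=-2$ one has $0\mapsto -2\mapsto 2\mapsto 2$. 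To finish I would verify that these are the only ways out (no critical value can escape $I$ through its interior, and the basin cannot otherwise degenerate while both critical values remain inside $I$), placing the frontier on the post-critical curve $f^{\circ 2}(c)=f^{\circ 3}(c)$ together with ${\rm{Per}}_1(1)$. As with the quadrant, the hard part is controlling the global persistence of the configuration across the whole region rather than only along the polynomial slice.
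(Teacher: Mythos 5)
Your identification of the two lines via the multiplier cubic is correct ($P(1)=3-2\sigma_1+\sigma_2$, $P(-1)=1-2\sigma_1-\sigma_2$ do cut out ${\rm{Per}}_1(1)$ and ${\rm{Per}}_1(-1)={\rm{Per}}_2(1)$), and the overall picture matches the paper's. But both bullets have a genuine gap at exactly the step you flag as ``the hard part.'' For the $h_\Bbb{R}\equiv 0$ quadrant, the closedness half of your clopen argument does not go through as stated: the fact that no attracting or neutral cycle other than $p$ is available (Lemma \ref{Schwarzian}) does not force the critical orbits to converge to $p$ --- a critical orbit can perfectly well fail to converge to anything (land on a repelling set, be dense in an invariant Cantor set, etc.), so ``no competing attractor'' does not rule out leaving the escape locus inside $C$. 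The paper closes this with a direct pointwise characterization that makes the clopen argument unnecessary: above ${\rm{Per}}_1(1)$ there is a unique real fixed point $p$, and above ${\rm{Per}}_2(1)$ there are no real $2$-cycles; by Sharkovsky's theorem there are then no real periodic points of period $>1$ at all, so the immediate basin of $p$ in $\hat{\Bbb{R}}$ cannot be a proper subinterval (its endpoints would be periodic of period at most $2$), hence \emph{every} real orbit --- in particular both critical orbits --- converges to $p$. That Sharkovsky step is the missing ingredient; without it (or an equivalent), the persistence over all of $C$ is not established.

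For the $h_\Bbb{R}\equiv\log(2)$ boundary, you explicitly leave unverified that ``these are the only ways out,'' and there is also a concrete hole in the exit mechanism you do describe: the endpoints $\alpha,\beta$ of the immediate basin may form a genuine $2$-cycle rather than being prefixed, in which case a critical value hitting $\partial I$ yields $f^{\circ 2}(c)=f^{\circ 4}(c)$, not $f^{\circ 2}(c)=f^{\circ 3}(c)$; you would need to exclude the period-two endpoint configuration. The paper avoids analyzing degenerations of the basin altogether: by continuity $h_\Bbb{R}=\log(2)$ on the boundary, and it then invokes the classification (from the companion paper \cite{2018arXiv180304082F}) of real quadratic maps attaining the maximal real entropy --- hyperbolic or parabolic shift locus, Julia set equal to $\hat{\Bbb{R}}$, or Julia set a closed interval carrying a boundary-anchored unimodal map with surjective laps --- and reads off that a non-hyperbolic boundary class is either in the parabolic shift locus (hence on ${\rm{Per}}_1(1)$) or has its critical value at the prefixed endpoint (hence satisfies $f^{\circ 2}(c)=f^{\circ 3}(c)$). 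Your route of perturbing the escape configuration could in principle be completed, but as written it neither excludes the period-two endpoints nor shows no other degeneration occurs, so the first bullet is not proved.
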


\begin{proof}
By continuity of the real entropy, $h_\Bbb{R}=\log(2)$ on the boundary of the 
$h_\Bbb{R}\equiv \log(2)$ escape component. There is a classification of real rational maps of degree $d\geq 2$ at which $h_\Bbb{R}$ attains its maximum $\log(d)$ \cite{filom2021real}: In our context,  a real quadratic rational map $f$ with 
$h_\Bbb{R}(f)=\log(2)$ is either in the (parabolic or hyperbolic) shift locus, or its Julia set is the whole $\hat{\Bbb{R}}$, or the Julia set a closed subinterval of it on which $f$ restricts to a boundary-anchored unimodal map with surjective monotonic laps. Among the last two possibilities, in the former $f$ induces an unramified two-sheeted covering of $\hat{\Bbb{R}}$,
the case which is irrelevant here. In the latter case, due to the aforementioned properties of the unimodal interval map 
$f\restriction_{\mathcal{J}(f)}$, the unique critical value should be the prefixed boundary point; hence the relation 
$f^{\circ 2}(c)=f^{\circ 3}(c)$.  The boundary point $\langle f\rangle$ of course cannot be hyperbolic, so the only remaining possibility is being in the  parabolic shift locus where both critical points converge to a fixed point of multiplier $+1$ and multiplicity two; compare with 
\cite[Lemma 8.2]{MR1246482} and \cite[\S4]{MR1806289}.\\
\indent
Switching to the part of the intersection of $E$ with the component of degree zero maps in $\mathcal{M}_2(\Bbb{R})-\mathcal{S}(\Bbb{R})$ where $h_\Bbb{R}\equiv 0$; we have to prove that this is cut off by lines ${\rm{Per}}_1({\pm 1})$. In the top quadrant determined by these lines in Figure \ref{fig:main}, there is precisely one real fixed point since we are above the dotted line ${\rm{Per}}_1(1)$ in the degree zero component; moreover, the period-doubling bifurcation has not occurred as we have not hit the other dotted line ${\rm{Per}}_1(-1)={\rm{Per}}_2(1)$ yet. Hence this open quadrant of the component of degree zero maps is characterized by the existence of a unique real fixed point which is attracting and the absence of real $2$-cycles.\footnote{By Sharkovsky's theorem, this means that there is no periodic point of period larger than one. Also being attracting is somehow automatic here: Let $g:[a,b]\rightarrow [a,b]$ be a $C^1$ interval map with a unique fixed point $p\in[a,b]$ and without any $2$-cycle. Applying the intermediate value theorem to $g^{\circ 2}$ implies that $g^{\circ 2}(x)>x$
over $[a,p)$ while $g^{\circ 2}(x)<x$ over $(p,b]$; hence $\left(g^{\circ 2}\right)'(p)=\left(g'(p)\right)^2\leq 1$ and $p$ is thus non-repelling.}  
Clearly, a map $f\in{\rm{Rat}}_2(\Bbb{R})$ form the $h_\Bbb{R}\equiv 0$ escape component fits in this description: By Proposition \ref{escape locus}, the Julia set $\mathcal{J}(f)$ is away from $\hat{\Bbb{R}}$. The subsystem $f\restriction_{\mathcal{J}(f)}$ is conjugate to the one-sided shift on two symbols; and therefore, for any $n$ there are exactly  $2^n$ distinct fixed points of $f^{\circ n}$ in $\mathcal{J}(f)$ while, on the other hand, the number of fixed points of 
$f^{\circ n}$ on the whole Riemann sphere, counted with multiplicity, is $2^n+1$. We deduce that the only periodic point for the restriction of $f$ to the Fatou set, and thus the only periodic point for the smaller restriction  $f\restriction_{\hat{\Bbb{R}}}$, is the real attracting fixed point $p$ whose basin contains both critical points. 
Conversely,  if $f\restriction_{\hat{\Bbb{R}}}$ has an attracting fixed point $p$ and no other point of period one or two, the real immediate basin of $p$ cannot be a proper subinterval of $\hat{\Bbb{R}}$ since otherwise, at least one of the endpoints of the immediate basin will be of period one or two. Therefore, the orbit of every point of $\hat{\Bbb{R}}$ converges to $p$; in particular, both critical orbits of $f$ tend to $p$. Consequently, $\langle f\rangle$ lies in the desired real escape component. 
\end{proof}
\noindent
The intersection of the escape component with the polynomial line $\left\{(2,4c)=\left\langle z^2+c\right\rangle\mid c\in\Bbb{R}\right\}$ of the moduli space $\mathcal{M}_2(\Bbb{R})$ can be identified with the set 
$\Bbb{R}-\mathbf{M}=(-\infty,-2)\cup\left(\frac{1}{4},+\infty\right)$ of real parameters outside the Mandelbrot set. The real entropy is zero for $c>\frac{1}{4}$ and $\log(2)$ for $c<-2$. Thus, we also refer to the real escape components associated with entropy values $0$ and $\log(2)$ as the ``upper'' or ``lower'' real escape components; cf. 
\cite[Figure 16]{MR1246482}.

\begin{remark}\label{escape boundary'}
The curve $f^{\circ 2}(c)=f^{\circ 3}(c)$ -- which appeared in the preceding proposition -- is described by the condition of a critical value being prefixed but not fixed. Invoking the mixed normal form \eqref{mixed normal form}, this curve can be exhibited as 
$$
\left\{\left\langle\frac{1}{\mu}\left(z+\frac{1}{z}+2\right)\right\rangle\,\Big|\, \mu\in\Bbb{R}-\{0\}\right\}
$$  
where $c=-1$ serves as the desired critical point. Setting $a$ to be $\frac{2}{\mu}$ in \eqref{coordinates} yields its equation in the $\left(\sigma_1,\sigma_2\right)$-plane as  $\left(\sigma_1+2\right)\left(2\sigma_1+\sigma_2+4\right)=\sigma_1-2$.  The lower half of this hyperbola -- which is part of the boundary of the  $h_\Bbb{R}\equiv\log(2)$ escape component   -- intersects ${\rm{Per}}_1(1):\sigma_2=2\sigma_1-3$ at 
$\left\langle z+\frac{1}{z}+2\right\rangle=(-1,-5)$. The Julia set of $z+\frac{1}{z}+2$ is an interval as expected from the proof of Proposition \ref{escape boundary}; compare with \cite[Problem 10-e]{MR2193309}. 
In fact, it is not hard to check that
for $\mu\geq 1$ the Julia set of $\frac{1}{\mu}\left(z+\frac{1}{z}+2\right)$ is $[-\infty,0]$ and the orbits outside it either converge to the parabolic fixed point at infinity (when $\mu=1$) or to the attracting fixed point $\frac{1}{\sqrt{\mu}-1}$
(when $\mu>1$).
 \end{remark}

\section{A new parameter space}\label{S4}
The goal of this section is twofold: We first introduce a new normal form \eqref{new normal form} that on the real axis restricts to an interval map without any vertical asymptotes and is hence convenient for computer implementations. This results in a new parameter space -- illustrated in Figure \ref{fig:parameter space} -- that admits a finite-to-one map onto the component of degree zero maps in $\mathcal{M}_2(\Bbb{R})-\mathcal{S}(\Bbb{R})$. Later in \S\ref{S5}, we will apply certain entropy calculation algorithms to maps of this  form to obtain entropy contour plots in this parameter space. Projecting into $\mathcal{M}_2(\Bbb{R})$ then yields pictures of isentropes in unimodal and bimodal regions of the moduli space.  Secondly, also in the current section, we will make a series of observations based on this more tractable normal form. These observations result in Lemma \ref{attracting fixed point} -- which is vital for the proof of Theorem \ref{temp1} in \S\ref{S6.1} -- and the exclusion of certain parts of the moduli space where the induced dynamics on the real circle are known and the real entropy is identically $0$ or $\log(2)$. This culminates in  Proposition \ref{unimodal-bimodal} that reduces our investigation of the real entropy of quadratic rational maps to the study of certain two-parameter families of unimodal and $(+-+)$-bimodal interval maps. 

We begin by slightly modifying the mixed normal form in \eqref{mixed normal form}. Recalling Propositions \ref{classification} and \ref{Julia circle}, in studying the real entropy of quadratic rational maps one can concentrate only on maps of the form
$$\frac{1}{\mu}\left(z+\frac{1}{z}\right)+a \quad (\mu\in\Bbb{R}-\{0\}, a\geq 0).$$
We can put the critical values at $\pm 1$ and get rid of real vertical asymptotes by a simple conjugation:
\begin{equation}\label{change of variable}
\left(z\mapsto\frac{2}{\mu(z-a)}\right)\circ\left(z\mapsto \frac{1}{\mu}\left(z+\frac{1}{z}\right)+a\right)\circ\left(z\mapsto\frac{2}{\mu(z-a)}\right)^{-1}=\frac{2\mu z(a\mu z+2)}{\mu^2 z^2+(a\mu z+2)^2}.
\end{equation}
It is more convenient to denote $a\mu$ by $t$ and write the previous map as 
$$x\mapsto\frac{2\mu x(tx+2)}{\mu^2x^2+(tx+2)^2}$$ where $\mu t\geq 0$.  
Here, $\mu\neq 0$ is the multiplier of the fixed point $x=0$ and the critical points and values are  
$\frac{2}{\pm\mu-t}\mapsto\pm1$. Consequently, it suffices to deal with the following family of systems defined on a common compact interval:
\begin{equation}\label{new normal form}
\left\{x\mapsto\frac{2\mu x(tx+2)}{\mu^2x^2+(tx+2)^2}:[-1,1]\rightarrow [-1,1]\right\}_{\mu\in\Bbb{R}-\{0\}, \mu t\geq 0}.
\end{equation}
Based on how the points $\frac{2}{\pm\mu-t}$ are located with respect to $[-1,1]$, one gets to the corresponding partition of the $(\mu,t)$-parameter plane in Figure \ref{fig:parameter space}.
Substituting $a$ with $\frac{t}{\mu}$ in \eqref{coordinates}
results in the formula 
\begin{equation}\label{the transformation}
\begin{cases}
F:\left\{(\mu,t)\in\Bbb{R}^2\mid \mu\neq 0, \,\mu t\geq 0\right\}\rightarrow\mathcal{M}_2(\Bbb{R})\cong\Bbb{R}^2\\
F(\mu,t)=(\sigma_1,\sigma_2)\\
\sigma_1=\mu-2+\frac{4}{\mu}-\frac{t^2}{\mu} \quad \sigma_2=\left(\mu+\frac{1}{\mu}\right)\sigma_1-\left(\mu^2+\frac{2}{\mu}\right)
\end{cases}
\end{equation}
for the map that assigns to each member of the family \eqref{new normal form} its conjugacy class.  
According to Proposition \ref{classification}, this map $F$ from the first and the third quadrants  of the $(\mu,t)$-plane (minus the $t$-axis) to the real moduli space  $\mathcal{M}_2(\Bbb{R})\cong\Bbb{R}^2$ is  onto the complement of the open degree $\pm 2$ regions and the open ray $\left\{(2,\sigma_2)\mid \sigma_2>1\right\}$.

\begin{figure}[ht!]
\center
\includegraphics[width=14cm]{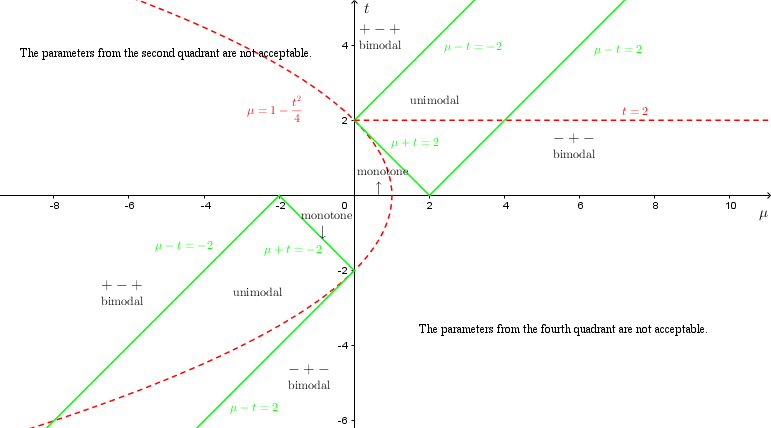}
\caption{The $(\mu,t)$-parameter space on which the map $F$  is defined in \eqref{the transformation}.}
\label{fig:parameter space}
\end{figure}

We proceed with several preliminary observations about the map from \eqref{the transformation}:
\begin{enumerate}[\bfseries 4.a]

\item \label{a}
Given the derivation of the normal form in \eqref{new normal form} from the mixed normal form \eqref{mixed normal form}, the cardinality of the fiber of $F$ above a point of $\mathcal{M}_2(\Bbb{R})-\mathcal{S}(\Bbb{R})$ is the number of different real fixed points of non-zero multiplier in the corresponding class.\footnote{The fixed-point normal form \eqref{fixed-point normal form} clearly indicates that away from the symmetry locus distinct fixed points come with different multipliers.}
In particular, $F$ is at most three-to-one. 
  
\item \label{b} 
The preimage under $F$ of the polynomial line $\sigma_1=2$ from Figure \ref{fig:main} consists of the line segment $\mu+t=2$ and rays $\mu-t=2$ in Figure \ref{fig:parameter space}. In fact, $F$ takes each of 
$\left\{(\mu,\mu-2)\mid \mu<0\right\}$ 
or
$\left\{(\mu,\mu-2)\mid \mu> 2\right\}$
bijectively onto the part of the vertical line $\sigma_1=2$ which is below the point $(2,0)=\langle z^2\rangle$
and takes 
$\left\{(\mu,2-\mu)\mid 0<\mu\leq 2\right\}$
to the closed segment of the polynomial line between 
$(2,0)=\langle z^2\rangle$
and
$(2,1)=\left\langle z^2+\frac{1}{4}\right\rangle$.

\item \label{c}
The preimage under $F$  of the line $\sigma_1=-6$ from Figure \ref{fig:main} -- with the post-critical description $f(c_1)=c_2$ -- consists of the line segment $\mu+t=-2$ and rays $\mu-t=-2$ in Figure \ref{fig:parameter space}. In fact, $F$ takes  
$\left\{(\mu,\mu+2)\mid \mu<-2\right\}$ 
and
$\left\{(\mu,-2-\mu)\mid -2<\mu<0 \right\}$
bijectively onto the parts of the vertical line  $\sigma_1=-6$
which are below and above the point  $(-6,12)=\langle \frac{1}{z^2}\rangle$ respectively.

\item \label{d} 
The map $F$ takes positive and negative rays of the $\mu$-axis to the symmetry locus $\mathcal{S}(\Bbb{R})$ in the 
Figure \ref{fig:main}.  
More precisely, $\left\{(\mu,0)\mid \mu\leq -2\right\}$ and $\left\{(\mu,0)\mid -2\leq\mu<0\right\}$
are  bijected respectively onto left and right branches of the component of 
$\mathcal{S}(\Bbb{R})$ that  passes through  $(-6,12)=\langle \frac{1}{z^2}\rangle$ and is adjacent to the degree $-2$ region while  $\left\{(\mu,0)\mid 0<\mu< 2\right\}$ and $\left\{(\mu,0)\mid \mu> 2\right\}$
are  bijected onto the halves of the other component of $\mathcal{S}(\Bbb{R})$ adjacent to the degree $+2$ region which are above or below the point  $(2,0)=\langle z^2\rangle$ respectively.

\item \label{e} 
The ray $\left\{(\mu,2)\mid \mu>0\right\}$ in Figure \ref{fig:parameter space} goes to the curve in $\mathcal{M}_2(\Bbb{R})$ defined by the critical orbit relation $f^{\circ 3}(c)=f^{\circ 2}(c)$, because when $t=2$:
$$-\frac{2}{\mu+t}\mapsto -1\mapsto \frac{2\mu(t-2)}{\mu^2+(t-2)^2}=0\mapsto 0.$$
The curve contains a part of the boundary of the $h_\Bbb{R}\equiv \log(2)$ escape component in $\mathcal{M}_2(\Bbb{R})$; see Proposition \ref{escape boundary} and Remark \ref{escape boundary'}.\footnote{The post-critical relation $f^{\circ 3}(c)=f^{\circ 2}(c)$ is satisfied along the line $t=-2$   too, this time for the other critical point 
$c=\frac{2}{\mu-t}$. But keep in mind that  in the third quadrant, near  $t=-2$, maps have precisely one real fixed point (observation \hyperref[f]{4.f}) while for the maps in the aforementioned real escape component all periodic points must be  real by Proposition \ref{escape locus}.}

\item \label{f} 
Solving for non-zero roots of 
$$\frac{2\mu x(tx+2)}{\mu^2x^2+(tx+2)^2}=x$$
results in the quadratic equation 
\begin{equation}\label{quadratic equation}
(\mu^2+t^2)x^2+(4t-2\mu t)x+4(1-\mu)=0
\end{equation}
whose discriminant is 
$16\mu^2\left(\frac{t^2}{4}+\mu-1\right)$.
The parabola $\mu=1-\frac{t^2}{4}$ in Figure \ref{fig:parameter space} is bijectively mapped onto the ray 
$\left\{(\mu+2,2\mu+1)\mid \mu\leq 1\right\}$ of the dotted line 
$${\rm{Per}}_1(1):\sigma_2=2\sigma_1-3$$ 
in Figure \ref{fig:main}.  Parameters inside the parabola are precisely those for which the origin is the only real fixed point of
$z\mapsto\frac{2\mu z(tz+2)}{\mu^2z^2+(tz+2)^2}$; in particular, the restriction of $F$  to the interior of the parabola $\mu=1-\frac{t^2}{4}$ is injective.  
\end{enumerate}
Observation \hyperref[a]{4.a} indicates that the cardinality of a fiber of $F$ is given by the number of real fixed points, a 
number which is determined in observation \hyperref[f]{4.f}: 
along the parabola  $\mu=1-\frac{t^2}{4}$ and the ray $\left\{(1,t)\mid t\geq 0\right\}$ there is a multiple fixed point; for parameters inside the parabola there is precisely one real fixed point which is simple and for the rest of $(\mu,t)$ parameters there are three real fixed points. It would be useful to take a closer look at the latter situation because it is exactly the subset of parameters over which the injectivity of $F$ fails.

\begin{lemma}\label{attracting fixed point}
A real quadratic map with real critical points and three distinct fixed points on $\hat{\Bbb{R}}$ -- in particular, any map from the $(-+-)$-bimodal region -- always has an attracting fixed point. Furthermore, the multiplier of this attracting fixed point can be assumed to be non-negative unless the topological type is $(-+-)$-bimodal.\footnote{Compare with \cite[Lemma 10.1]{MR1246482}.}
\end{lemma}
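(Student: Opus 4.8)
The plan is to read off the signs of the three fixed-point multipliers from the graph of the induced interval map and then feed them into the fixed point formula \eqref{fixed point formula}. Since the critical points are real, Proposition \ref{Julia circle} makes $J:=f(\hat{\Bbb R})$ a proper closed arc of the circle; after a real Möbius conjugacy (which changes neither the multipliers nor the dynamics) I may assume $\infty\notin J$, so that $g:=f\restriction_J:J\to J$ is an honest self-map of a compact interval $J=[u,v]\subset\Bbb R$, and every fixed point, being a value of $f$, lies in $J$. Because $f$ is real with three real distinct fixed points, the multipliers $\mu_1,\mu_2,\mu_3$ are real, pairwise distinct, and (the fixed points being simple) all different from $1$; see Proposition \ref{classification} and the ensuing discussion.

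First I would analyze $\phi:=g-\mathrm{id}$ on $J$. As $g(J)\subseteq J$ we have $\phi(u)\ge0\ge\phi(v)$, and the fixed points $p_1<p_2<p_3$ are exactly the zeros of $\phi$, all transversal since $\phi'(p_i)=\mu_i-1\ne0$. In the interior case ($\phi(u)>0>\phi(v)$) the boundary signs plus transversality force the sign word $+,-,+,-$ on the four complementary subintervals, whence $\phi'(p_1)<0,\ \phi'(p_2)>0,\ \phi'(p_3)<0$; equivalently the middle point is repelling with $\mu_2>1$ while the outer two satisfy $\mu_1,\mu_3<1$. Substituting into \eqref{fixed point formula} and using $\tfrac{1}{1-\mu_2}<0$ yields $\tfrac{1}{1-\mu_1}+\tfrac{1}{1-\mu_3}>1$; but if both $\mu_1,\mu_3\le-1$ then $1-\mu_1,1-\mu_3\ge2$ and the same sum is $\le1$, a contradiction. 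Hence at least one outer multiplier lies in $(-1,1)$, producing an attracting fixed point and proving the first assertion (a quantitative sharpening of \cite[Lemma 10.1]{MR1246482}).

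For the second assertion I would track the sign of the attracting multiplier through the shape of $g$: the sign of $\mu_i=g'(p_i)$ records whether $p_i$ sits on an increasing or a decreasing lap, so a non-negative multiplier means the attracting point lies on an increasing lap. Since $\mu_2>1>0$, the repelling point $p_2$ always lies on an increasing lap, and the decisive observation is that, scanning the graph types in Figure \ref{fig:graphs}, every increasing lap is an \emph{end} lap (adjacent to $\partial J$) \emph{except} in the $(-+-)$-bimodal case, where the only increasing lap is the middle one. Thus when $f$ is not $(-+-)$-bimodal the increasing lap carrying $p_2$ reaches an endpoint of $J$ and therefore also carries one of the outer points $p_o\in\{p_1,p_3\}$; there $g'(p_o)>0$ while $\mu_o<1$ from the sign word, so $\mu_o\in(0,1)$ gives an attracting fixed point of non-negative multiplier. (The $(-+-)$-bimodal maps lie entirely in the three-fixed-point locus by observation \hyperref[f]{4.f}, so they genuinely fall under the hypothesis; there the two outer points may both sit on decreasing laps, forcing negative multipliers, which is precisely the excluded case.)

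The main obstacle I anticipate is making the sign word rigorous in the degenerate placements that the non-boundary-anchored setting permits: a fixed point at a turning point of $g$ (a fixed critical point, multiplier $0$) or at an endpoint of $J$ (a fixed critical value, a codimension-one post-critical condition). The turning-point case only helps, since $\mu=0\ge0$ is attracting. The endpoint case I would dispatch by one-sided inspection rather than the alternating sign word: for instance a fixed left endpoint satisfies $g\ge u$ on $J$ with $g(u)=u$, forcing $g'(u)\ge0$, so such a fixed point can only assist the non-negativity claim; the repelling/attracting dichotomy at the remaining points is then recovered by the same $\phi$-sign bookkeeping on the truncated interval, keeping in mind that $\mu=\pm1$ is excluded off ${\rm Per}_1(\pm1)$.
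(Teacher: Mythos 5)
Your proof is correct, but it reaches the conclusion by a genuinely different route than the paper. The paper works almost entirely on the algebraic side: it assumes no multiplier lies in $[0,1)$, so each $\mu_i$ is in $(-\infty,0)\cup(1,\infty)$, and then eliminates the possible sign patterns one by one using the fixed point formula \eqref{fixed point formula} (rewritten as $\mu_3=\frac{2-(\mu_1+\mu_2)}{1-\mu_1\mu_2}$), with a single dynamical input -- the interval map has at most two decreasing laps, hence at most two fixed points of negative multiplier -- which is also what ties the surviving case $\mu_1,\mu_2<0$, $\mu_3>1$ to the $(-+-)$-bimodal type. You instead extract the full sign pattern $\mu_2>1$, $\mu_1,\mu_3<1$ directly from the graph of $g-\mathrm{id}$ on $J=f(\hat{\Bbb R})$ and then invoke \eqref{fixed point formula} exactly once, to forbid $\mu_1,\mu_3\le-1$ simultaneously; your treatment of the non-negativity clause (the repelling middle fixed point sits on an increasing lap, and outside the $(-+-)$ case every increasing lap is an end lap and therefore also carries an outer fixed point, whose multiplier is then in $[0,1)$) likewise replaces the paper's counting of decreasing laps. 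What your approach buys is more structural information -- it identifies the middle fixed point as the repelling one and shows both outer multipliers are $<1$ -- at the price of having to police the degenerate placements (fixed critical points, fixed endpoints of $J$) that the paper's purely multiplier-based casework never sees. You correctly flag these and your sketch does close up (e.g.\ a fixed left endpoint with $\mu_1>1$ forces, by the alternating-sign count against $\phi(v)\le0$, that $v$ is also fixed with $\mu_3>1$, whereupon \eqref{fixed point formula} puts $\mu_2\in(0,1)$), but you should carry out that bookkeeping explicitly rather than gesture at it. One small correction: $\mu_i=-1$ is \emph{not} excluded by the hypotheses -- three distinct fixed points only rules out $\mu_i=1$, i.e.\ puts you off ${\rm Per}_1(1)$, not off ${\rm Per}_1(-1)$ -- but this is harmless, since transversality of the zeros of $\phi$ needs only $\mu_i\ne1$ and your final inequality needs only that not both outer multipliers are $\le-1$.
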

\begin{proof}
This is an immediate consequence of the fixed point formula \eqref{fixed point formula}: the multipliers $\mu_1,\mu_2,\mu_3$ of fixed points should be real numbers different from $1$ satisfying 
$$\frac{1}{1-\mu_1}+\frac{1}{1-\mu_2}+\frac{1}{1-\mu_3}=1.$$
If at least one $\mu_i\in\Bbb{R}-\{1\}$ belongs to $[0,1)$, we are done. Assuming the contrary, suppose for every 
$i\in\{1,2,3\}$ either $\mu_i<0$ or $\mu_i>1$. It is impossible to have the latter for all $\mu_i$'s since in that case the left-hand side of the equality above would be negative, and it is not possible that the former holds for all $\mu_i$'s either since the interval map
$f\restriction_{f(\hat{\Bbb{R}})}:f(\hat{\Bbb{R}})\rightarrow f(\hat{\Bbb{R}})$
can have at most two decreasing laps and thus at most two fixed points of negative multipliers. So without any loss of generality, one can assume that either $\mu_1,\mu_2>1,\mu_3<0$ or $\mu_1,\mu_2<0,\mu_3>1$. Rewriting the equality above as $\mu_3=\frac{2-(\mu_1+\mu_2)}{1-\mu_1\mu_2}$ and comparing signs implies that the first possibility cannot take place. Consequently, we need to have $\mu_1,\mu_2<0,\mu_3>1$ and now the presence of two real fixed points with negative multipliers requires the induced dynamics on $\hat{\Bbb{R}}$ to be $(-+-)$-bimodal. At least one of the fixed points of negative multiplier is attracting as otherwise we have $\mu_1,\mu_2\leq -1,\mu_3>1$ which yields  
$\mu_3>0>\frac{2-(\mu_1+\mu_2)}{1-\mu_1\mu_2}$ contradicting the aforementioned equality.\\
\indent
Finally, notice that a $(-+-)$-bimodal continuous self-map of a compact interval must have a fixed point at any of its laps by a simple application of the intermediate value theorem; cf. Figures \ref{fig:graph-1}, \ref{fig:graph-2}.
\end{proof}

\begin{figure}[ht!]
\centering
\includegraphics[width=9cm]{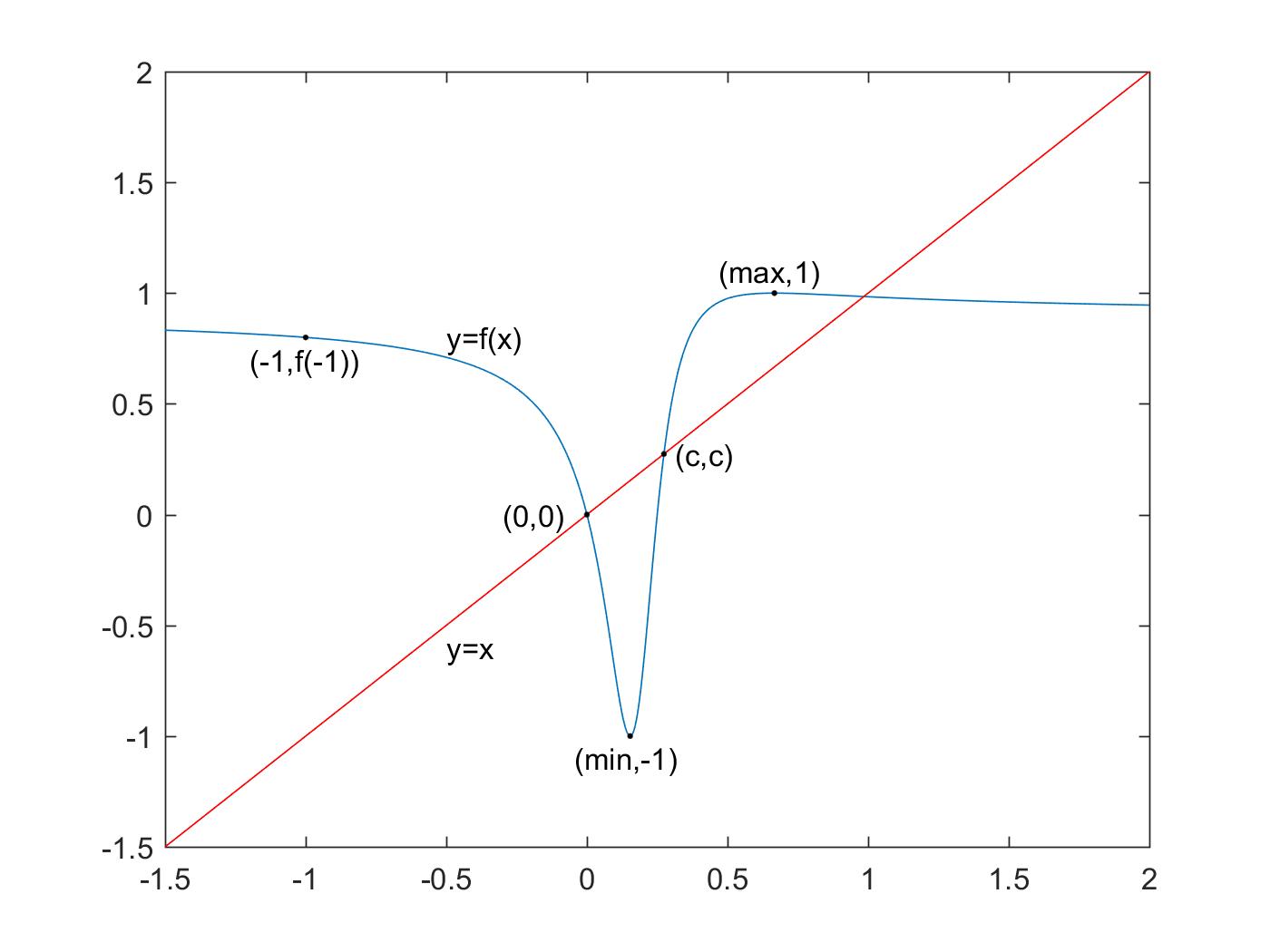}
\caption{A map of the form $f(x)=\frac{2\mu x(tx+2)}{\mu^2x^2+(tx+2)^2}$ with $\mu,t<0$ that admits three real fixed points.}
\label{fig:graph-1}
\end{figure}

\begin{figure}[ht!]
\centering
\includegraphics[width=9cm]{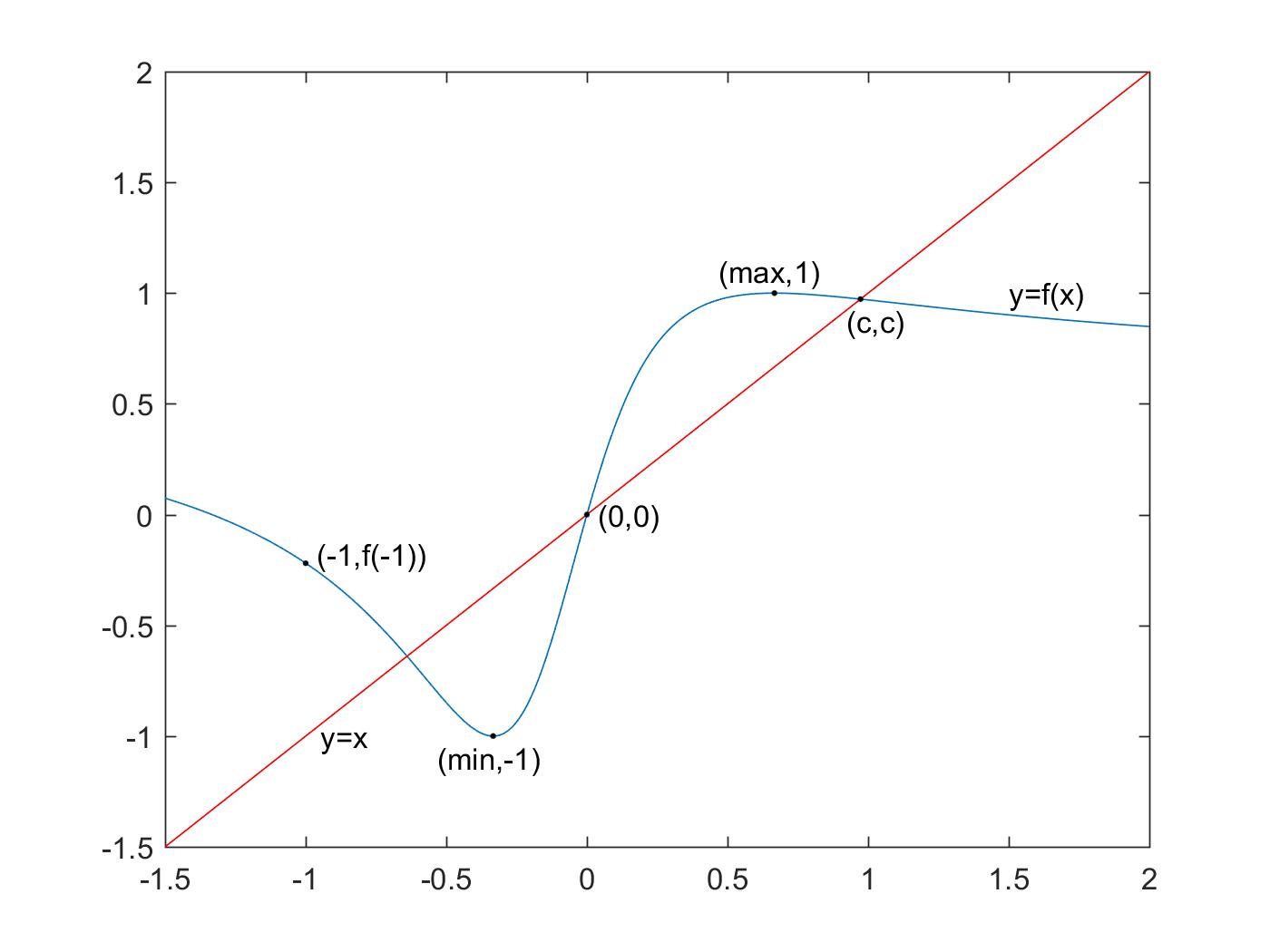}
\caption{A map of the form $f(x)=\frac{2\mu x(tx+2)}{\mu^2x^2+(tx+2)^2}$ with $\mu>0, 0<t<2$.}
\label{fig:graph-2}
\end{figure}

Equipped with this, we continue our observations about the map $F$:
\begin{enumerate}[\bfseries 4.a]
\setcounter{enumi}{6}

\item \label{g} 
In Figure \ref{fig:parameter space}, the image under $F$ of the part of the unimodal region of the third quadrant  which is outside the parabola is covered by the image of the first quadrant: by Lemma 
\ref{attracting fixed point}, a unimodal map $x\mapsto\frac{2\mu x(tx+2)}{\mu^2x^2+(tx+2)^2}$ there 
admits three real fixed points, one of them with non-negative multiplier.  If this multiplier is positive,  a suitable Möbius  change of coordinates of $\hat{\Bbb{R}}$  puts the fixed point at $\infty$ and yields  a mixed normal form 
$\frac{1}{\mu'}(z+\frac{1}{z})+a'$ with $\mu'>0,a'\geq 0$.  After a change of variable similar to \eqref{change of variable}, this then results in a function of the form 
$z\mapsto\frac{2\mu' z(t'z+2)}{\mu'^2z^2+(t'z+2)^2}$. When the multiplier is zero, the map is conjugate to a polynomial and  the sum of the other two multipliers is $2$; so again, there is a real fixed point of positive multiplier and the argument above remains valid. \\
\indent
The same holds for $(-+-)$-bimodal regions: a $(-+-)$-bimodal self-map of an interval possesses a fixed point in each of its laps (Figures \ref{fig:graph-1}, \ref{fig:graph-2}) and hence a fixed point of non-negative multiplier. Thus a point from the $(-+-)$-bimodal region of the moduli space can be written as $F(\mu,t)$ for a suitable point $(\mu,t)$ from the first quadrant. 
\end{enumerate} 

The observations made so far yield Figure \ref{fig:transformation}  illustrating how $F$ takes regions of the $(\mu,t)$-plane 
(Figure \ref{fig:parameter space}) to those of the moduli space $\mathcal{M}_2(\Bbb{R})$ (Figure \ref{fig:main}). 

\begin{figure}[htb!]
\center
\includegraphics[width=13cm, height=7.3cm]{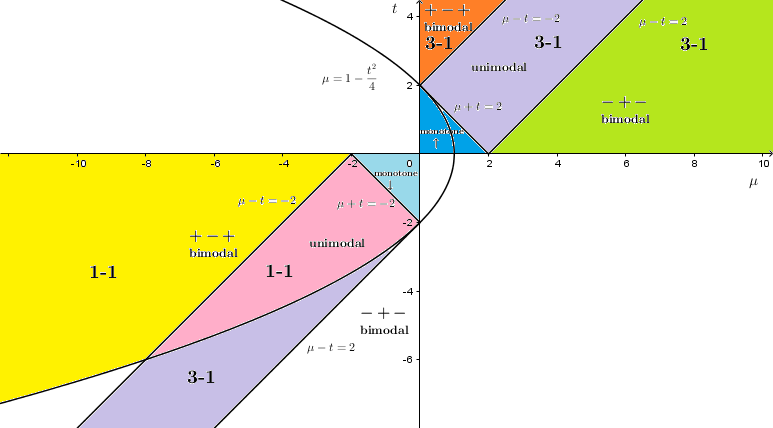}
\includegraphics[width=15cm, height=7.3cm]{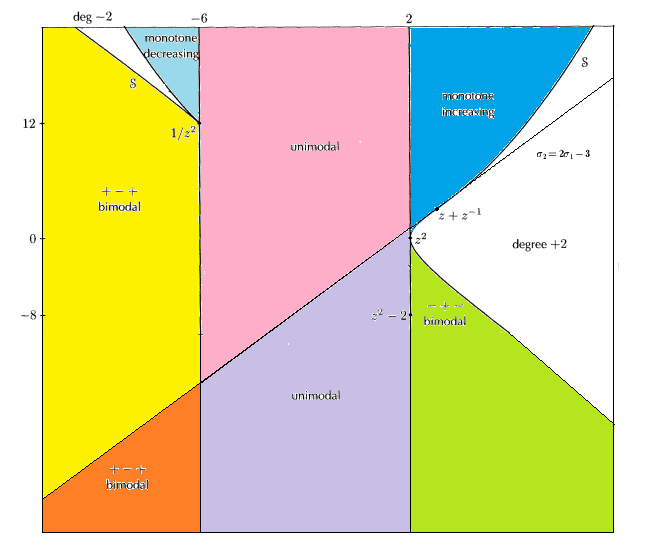}
\caption{The map $F$ from \eqref{the transformation} takes each colored  region in the first or third quadrant of the $(\mu,t)$-plane onto the region of the same color in the moduli space. Form the entropy point of view, one can safely focus only on unimodal and $(-+-)$-bimodal regions of the first quadrant and the non-monotone colored regions of the third quadrant inside the parabola: $h_\Bbb{R}\equiv\log(2)$ on $(+-+)$-bimodal parameters outside the parabola (observations \hyperref[h]{4.h}, \hyperref[i]{4.i}), and the $(-+-)$-bimodal region of the first quadrant is mapped onto the corresponding region of the moduli space (observation \hyperref[g]{4.g}).}
\label{fig:transformation}
\end{figure}

It is of course not necessary to consider the whole first and third quadrants of the $(\mu,t)$-plane in Figure \ref{fig:parameter space}: there are parts  of the $(\mu,t)$-parameter plane where the induced dynamics on the real circle is fairly easy to describe, i.e. the monotone regions or the components of the real escape locus;  the real entropy over these parts either vanishes or is identically $\log(2)$.  
We continue our observations to find more of such uninteresting regions.
\begin{enumerate}[\bfseries 4.a]
\setcounter{enumi}{7}
\item  \label{h}  Consider a point $(\mu,t)$ of the first quadrant away from the monotone increasing region; $\mu,t>0$ and $\mu+t>2$. 
The map $f(x)=\frac{2\mu x(tx+2)}{\mu^2x^2+(tx+2)^2}$ attains it absolute minimum $-1$ at 
${\rm{min}}:=-\frac{2}{\mu+t}\in [-1,0]$ while $f(0)=0$; cf. Figure \ref{fig:graph-2}. Hence if $f(-1)=\frac{2\mu (t-2)}{\mu^2+(t-2)^2}$ is positive, by the intermediate value theorem, $[-1,0]$ would be doubly covered by the subset 
$\left[-\frac{2}{t},0\right]=\left[-\frac{2}{t},\min\right]\cup\left[\min,0\right]$ of itself; hence must contain the whole Julia set; cf. Lemma \ref{criterion}. We conclude that  in Figure \ref{fig:parameter space} the real entropy is identically $\log(2)$ above the line $t=2$ ; in particular, for the $(+-+)$-bimodal maps one can concentrate only  on the  corresponding region of the third quadrant. Compare with observation \hyperref[e]{4.e}: the line $t=2$ goes to the boundary of the $h_\Bbb{R}\equiv\log(2)$ component  of the escape locus.

\item \label{i} 
Switching to the third quadrant, we claim that outside the parabola and away from the monotone region the entropy is constant outside a bounded region. For a point $(\mu,t)$ outside the parabola with $\mu,t<0$ and $\mu+t<-2$, the corresponding interval map 
$$
\begin{cases}
f:[-1,1]\rightarrow[-1,1]\\
f(x)=\frac{2\mu x(tx+2)}{\mu^2x^2+(tx+2)^2} 
\end{cases}$$
has three fixed points and takes its absolute minimum $-1$ at 
$\min:=-\frac{2}{\mu+t}\in (0,1)$. Hence it must have a positive real fixed point $c\in[\min,1]$; compare with Figure \ref{fig:graph-1}. By the intermediate value theorem, $f$ attains every value from $[-1,c]$ in $[\min,c]$. The same interval of values is realized over $[-1,\min]$ as well provided that $f(-1)=\frac{2\mu (t-2)}{\mu^2+(t-2)^2}>c$.
 In that case, the interval $[-1,c]$ would be doubly covered by a subset of itself and thus contains the Julia set, and so the real entropy would be $\log(2)$.  We next derive inequalities for $\mu,t$ that guarantee this. The non-zero fixed points are roots of the quadratic equation \eqref{quadratic equation} and thus, by Vieta's formulas, they are positive and the smaller one  is at most $\frac{\mu t-2t}{\mu^2+t^2}$. It suffices to take $\mu<-4$ due to the following inequalities:
\begin{equation*}
\begin{split}
&c\leq \frac{\mu t-2t}{\mu^2+t^2}<\frac{\frac{3}{2}\mu t}{\mu^2+t^2}
=\frac{1}{\frac{2}{3}\left(\frac{\mu}{t}+\frac{t}{\mu}\right)}\leq
\frac{1}{\frac{1}{2}\left(\frac{\mu}{t}+\frac{t}{\mu}\right)+\frac{1}{3}}=
\frac{2}{\frac{\mu}{t}+\left(\frac{t}{\mu}+\frac{2}{3}\right)}\\
&<\frac{2}{\frac{\mu}{t-2}+\frac{t-2}{\mu}}=\frac{2\mu (t-2)}{\mu^2+(t-2)^2}=f(-1).
\end{split}
\end{equation*}
In particular, $h_\Bbb{R}\equiv\log(2)$ on the part of $(+-+)$-bimodal region of the third quadrant that lies outside the parabola $\mu=1-\frac{t^2}{4}$ as $\mu\leq -8$ there; cf. Figure \ref{fig:parameter space}. As a matter of fact, $F$ takes this portion of the 
$(+-+)$-bimodal region of the third quadrant onto the part of the $(+-+)$-bimodal region of the moduli space which is below the line ${\rm{Per}}_1(1)$. This is contained in the $h_\Bbb{R}\equiv\log(2)$ component  of the escape locus; compare with
Proposition \ref{escape boundary}. Consequently, throughout the aforementioned part of the third quadrant of the $(\mu,t)$-parameter space, the dynamics on the non-wandering set of the real subsystem (aside from the fixed point that attracts the critical points) is that of the full $2$-shift.
\end{enumerate}

Finally, we exploit the dynamical constraint that Lemma \ref{attracting fixed point} puts on $(-+-)$-bimodal maps to argue that in the first quadrant of the $(\mu,t)$-plane (Figure \ref{fig:parameter space}) we are essentially dealing with unimodal maps.

\begin{enumerate}[\bfseries 4.a]
\setcounter{enumi}{9}
\item  \label{j} 
Pick two parameters $\mu,t>0$ with $\mu+t>2$ so that the self-map 
$f(x)=\frac{2\mu x(tx+2)}{\mu^2x^2+(tx+2)^2}$ of $[-1,1]$ is either unimodal or bimodal; the absolute minimum $-1$
is attained at $\min:=-\frac{2}{\mu+t}\in[-1,0]$; compare with Figure \ref{fig:graph-2}. Such a point $(\mu,t)$ lies outside the parabola $\mu=1-\frac{t^2}{4}$; therefore, by \hyperref[f]{4.f}, there are three real fixed points. According to observation \hyperref[h]{4.h}, there is no harm in restricting ourselves to the region below $t=2$; this rules out the $(+-+)$-bimodal case: 
the other critical point $\max:=\frac{2}{\mu-t}$ does not belong to $[-1,0]$. But if $t<2$, then $f(-1)=\frac{2\mu (t-2)}{\mu^2+(t-2)^2}<0$ and $f$ restricts to a unimodal self-map of $[-1,0]$ because the only critical point there, 
$-\frac{2}{\mu+t}$,  goes to $-1$. This subsystem  of 
$f\restriction_{[-1,1]}$
carries all the entropy: Notice that the other half $[0,1]$ of the domain is invariant as well due to the fact that $\mu,t>0$ and we just need to argue that the entropy of the complementary subsystem $f\restriction_{[0,1]}:[0,1]\rightarrow [0,1]$ vanishes. This is of course true when the aforementioned map is monotone (i.e. $(\mu,t)$ belongs to the unimodal region) and in the case that the other critical point $\max=\frac{2}{\mu-t}$ lies in $[0,1]$ (i.e. $(\mu,t)$ belongs to the $(-+-)$-bimodal region); one has $\mu>2$ and there exists  a unique positive fixed point $c>0$ (see Figure \ref{fig:graph-2}) which is not hard to verify that it is attracting: the multiplier can be written as 
$$
f'(c)=\left(\frac{2}{\frac{\mu x}{tx+2}+\frac{tx+2}{\mu x}}\right)'\Big|_{x=c}=
\frac{4\mu\left(\frac{1}{(\mu c)^2}-\frac{1}{(tc+2)^2}\right)}{\left(\frac{\mu c}{tc+2}+\frac{tc+2}{\mu c}\right)^2}.
$$
Notice that as $f(c)=c$, the term $\frac{\mu c}{tc+2}+\frac{tc+2}{\mu c}$ in the denominator is equal to $\frac{2}{c}$, and thus 
$f'(c)=\frac{1}{\mu}-\mu\left(\frac{c}{tc+2}\right)^2$. But 
$\left(\frac{c}{tc+2}\right)^2=\frac{2}{\mu(tc+2)}-\frac{1}{\mu^2}$
which has been obtained from simplifying the identity
$$\frac{f(c)}{c}=\frac{2\mu(tc+2)}{\mu^2c^2+(tc+2)^2}=1.$$
 Plugging in the previous equality yields
$f'(c)=\frac{2}{\mu}-\frac{2}{tc+2}$. We now have:
$$-1<\frac{-2}{tc+2}<\frac{2}{\mu}-\frac{2}{tc+2}<\frac{2}{\mu}<1;$$
so $c$ is attracting and the orbit under $f$ of every point in $(0,\infty)$ tends to $c$. 

\end{enumerate}

Now we can narrow down the $(\mu,t)$-parameter space in Figure \ref{fig:parameter space} to smaller domains. 
In the first quadrant of the $(\mu,t)$-plane, it suffices to deal with a family of unimodal self-maps of $[-1,0]$ determined by the inequalities  $\mu+t>2, t<2$ (observation \hyperref[j]{4.j}) which  amounts to maps outside the closure of the escape locus in the $(-+-)$-bimodal region of the moduli space and in the half of the unimodal region which lies below the line ${\rm{Per}}_1(1)$. The other half of the unimodal region of $\mathcal{M}_2(\Bbb{R})$ is the bijective image of the part of the unimodal region of the third quadrant which is inside the parabola; cf. Figure \ref{fig:transformation}. Finally, as for the $(+-+)$-bimodal parameters,
observations \hyperref[h]{4.h} and \hyperref[i]{4.i} indicate that all interesting entropy  behavior occurs
in the portion of the  $(+-+)$-bimodal region of the third quadrant which is inside the parabola. 
We have summarized all of these in the following:

\begin{proposition}\label{unimodal-bimodal}
The study of the  real dynamics and the entropy behavior of real quadratic rational maps reduces to studying 
the following  two-parameter families of interval maps in the sense that a real quadratic rational map conjugate to no member of these families is of real entropy $0$ or $\log(2)$. 
Each of the families below is parametrized over a domain from the $(\mu,t)$-parameter space in  Figure \ref{fig:parameter space}:
\begin{itemize}
\item the family 
\begin{equation}\label{family-1}
\mathbf{\mathcal{F}_1}:\left\{x\mapsto\frac{2\mu x(tx+2)}{\mu^2x^2+(tx+2)^2}:[-1,0]\rightarrow [-1,0]\right\}_{(\mu,t)\in\mathbf{U_1}}, 
\end{equation}
of unimodal interval maps parametrized 
over the domain
\begin{equation}\label{domain-1}
\mathbf{U_1}:=\left\{(\mu,t)\mid\mu,t>0,\, 2-\mu< t< 2\right\};
\end{equation}
\item the family 
\begin{equation}\label{family-2}
\mathbf{\mathcal{F}_2}:\left\{x\mapsto\frac{2\mu x(tx+2)}{\mu^2x^2+(tx+2)^2}:[-1,1]\rightarrow [-1,1]\right\}_{(\mu,t)\in\mathbf{U_2}}, 
\end{equation} of unimodal interval maps parametrized 
over the domain
\begin{equation}\label{domain-2}
\mathbf{U_2}:=\left\{(\mu,t)\mid \mu,t<0,\, t-2< \mu< |t+2|\right\};
\end{equation}
\item the family 
\begin{equation}\label{family-3}
\mathbf{\mathcal{F}_3}:\left\{x\mapsto\frac{2\mu x(tx+2)}{\mu^2x^2+(tx+2)^2}:[-1,1]\rightarrow [-1,1]\right\}_{(\mu,t)\in\mathbf{U_3}}, 
\end{equation}
of $(+-+)$-bimodal interval maps parametrized 
over the domain
\begin{equation}\label{domain-3}
\mathbf{U_3}:=\left\{(\mu,t)\,\Big|\,\mu,t<0,\, \mu<\min\left\{t-2,1-\frac{t^2}{4}\right\}\right\}.
\end{equation}
\end{itemize}
\end{proposition}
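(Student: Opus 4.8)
The statement collates observations \hyperref[a]{4.a}--\hyperref[j]{4.j}, so the plan is to organize them into an exhaustive analysis of the domain of the parametrization $F$ from \eqref{the transformation} and, on each piece, either certify that $h_\Bbb{R}\in\{0,\log(2)\}$ or recognize the piece as one of $\mathcal{F}_1,\mathcal{F}_2,\mathcal{F}_3$ (or as a redundant copy of a piece already treated). First I would reduce to this parametrized setting: by Propositions \ref{classification} and \ref{Julia circle}, a class whose real entropy is not automatically $\log(2)$ has real critical points, so up to ${\rm{PGL}}_2(\Bbb{R})$-conjugacy it is represented by a map $\frac{1}{\mu}\left(z+\frac{1}{z}\right)+a$, hence after the conjugation \eqref{change of variable} by a member $F(\mu,t)$ of \eqref{new normal form} with $(\mu,t)$ in the open first or third quadrant of the $(\mu,t)$-plane (Figure \ref{fig:parameter space}). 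The surjectivity statement recorded right after \eqref{the transformation} guarantees that every relevant class is reached this way, so it suffices to partition these two quadrants.

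For the first quadrant I would cut along $\mu+t=2$ and $t=2$. Below $\mu+t=2$ the minimum $-\frac{2}{\mu+t}$ leaves $[-1,1]$ and the interval map is monotone increasing, whence $h_\Bbb{R}\equiv 0$; above $t=2$, observation \hyperref[h]{4.h} forces $h_\Bbb{R}\equiv\log(2)$. On the complementary strip $\mathbf{U_1}$, observation \hyperref[j]{4.j} shows that $f$ restricts to a unimodal self-map of $[-1,0]$ carrying all of the entropy, the half $[0,1]$ contributing nothing (its restriction is either monotone or has a single attracting fixed point, as computed in \hyperref[j]{4.j}). This produces the family $\mathcal{F}_1$ and, via Lemma \ref{attracting fixed point} and observation \hyperref[g]{4.g}, it absorbs both the unimodal and the $(-+-)$-bimodal parameters of the first quadrant.

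For the third quadrant I would cut by the parabola $\mu=1-\frac{t^2}{4}$, which by observation \hyperref[f]{4.f} separates the locus of a unique real fixed point (its concave side, where $F$ is injective) from that of three real fixed points. Outside the parabola the unimodal parameters are redundant by observation \hyperref[g]{4.g}, the $(+-+)$-bimodal ones satisfy $h_\Bbb{R}\equiv\log(2)$ by observation \hyperref[i]{4.i}, and the monotone decreasing ones again give $h_\Bbb{R}\equiv 0$. Inside the parabola the non-monotone maps split, according to whether the second critical point $\frac{2}{\mu-t}$ lies outside or inside $[-1,1]$ (equivalently $\mu>t-2$ or $\mu<t-2$), into the unimodal family $\mathcal{F}_2$ over $\mathbf{U_2}$ and the $(+-+)$-bimodal family $\mathcal{F}_3$ over $\mathbf{U_3}$. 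Assembling the pieces on which $h_\Bbb{R}\in\{0,\log(2)\}$ together with the redundant piece then leaves exactly $\mathcal{F}_1,\mathcal{F}_2,\mathcal{F}_3$, which is the assertion.

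The real work lies in turning the qualitative pictures of Figures \ref{fig:parameter space} and \ref{fig:transformation} into the precise inequalities defining $\mathbf{U_1},\mathbf{U_2},\mathbf{U_3}$, and the main obstacle is the third-quadrant bookkeeping. One must check that the conditions ``$\frac{2}{\mu-t}\in[-1,1]$ or not'' and ``$(\mu,t)$ on the concave side of the parabola'' cut the non-monotone part of the quadrant so as to match the stated loci $t-2<\mu<|t+2|$ and $\mu<\min\{t-2,1-\frac{t^2}{4}\}$, and that these dovetail with the monotone-decreasing boundary and with the escape-locus boundary of observation \hyperref[i]{4.i}. Here one should be careful that the reduction claim is what matters: even where the boundary descriptions are slightly generous and a parameter labelled unimodal turns out $(-+-)$-bimodal, such a class is harmless because it is redundant, its entropy already being realized by a member of $\mathcal{F}_1$ through Lemma \ref{attracting fixed point} and observation \hyperref[g]{4.g}. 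Finally, matching the moduli-space lines ${\rm{Per}}_1(\pm1)$ and the post-critical curve of Remark \ref{escape boundary'} against their parameter-plane preimages (observations \hyperref[b]{4.b}--\hyperref[f]{4.f}) is elementary but must be carried out fully to guarantee that no class of entropy strictly between $0$ and $\log(2)$ escapes the three families.
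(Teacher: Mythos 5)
Your proposal is correct and matches the paper's own treatment: the proposition is stated there as a summary of observations \hyperref[a]{4.a}--\hyperref[j]{4.j} together with the reduction to the normal form \eqref{new normal form} via Propositions \ref{classification} and \ref{Julia circle}, and you assemble exactly those ingredients into the same quadrant-by-quadrant case analysis (cut the first quadrant along $\mu+t=2$ and $t=2$ and apply \hyperref[h]{4.h} and \hyperref[j]{4.j}; cut the third quadrant along the parabola and the locus $\mu=t-2$ and apply \hyperref[f]{4.f}, \hyperref[g]{4.g}, \hyperref[i]{4.i}). Your closing remark that slightly generous boundary descriptions are harmless because the statement is a reduction claim (redundant or monotone parameters contribute entropy $0$ or are already realized in $\mathcal{F}_1$) is exactly the right way to reconcile the stated inequalities for $\mathbf{U_2}$ with Figure \ref{fig:parameter space}.
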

\noindent
Notice that, once projected to the real moduli space $\mathcal{M}_2(\Bbb{R})$, families $\mathbf{\mathcal{F}_1}$ and $\mathbf{\mathcal{F}_2}$ overlap since they share the half of the unimodal region which is below the line ${\rm{Per}}_1(1)$.

\section{Entropy plots}\label{S5}

We have run a couple of algorithms to compute entropy values for interval maps in the normal form \eqref{new normal form}. The implemented algorithms have been adapted from papers \cite{MR1151977}, \cite{MR1002478} and are based on the  comparison of the kneading data with that of a piecewise-linear map with the same number of laps whose entropy is known.  

In Figures \ref{fig:plot1} and \ref{fig:plot2}, we have applied the algorithm from \cite{MR1002478} to two-parameter families of unimodal interval maps appeared in \eqref{family-1} and \eqref{family-2}. The contour plots obtained in the $(\mu,t)$-plane are then projected into the unimodal and $(-+-)$-bimodal regions of the moduli space; see Figure \ref{fig:plot3}. These plots suggest the following that turns out to be a stronger version of Conjecture \ref{monotonicity 2} (see Proposition \ref{conjectures}):

\begin{conjecture}\label{monotonicity 3}
The level sets of the restriction of  $h_\Bbb{R}$ to the adjacent unimodal and $(-+-)$-bimodal regions of the moduli space
$\mathcal{M}_2(\Bbb{R})$ are connected.
\end{conjecture}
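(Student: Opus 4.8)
The plan is to obtain the connectedness of the isentropes over the union of the unimodal and $(-+-)$-bimodal regions by reducing everything to the two-parameter unimodal families of Proposition \ref{unimodal-bimodal} and then gluing the partial result of Theorem \ref{temp1} to a new monotonicity statement across the line ${\rm{Per}}_1(1)$ by means of Lemma \ref{point-set topology}. First I would pass to the $(\mu,t)$-parametrization: by Proposition \ref{unimodal-bimodal} every class carrying entropy strictly between $0$ and $\log(2)$ in this part of $\mathcal{M}_2(\Bbb{R})$ is conjugate to a member of $\mathcal{F}_1$ or $\mathcal{F}_2$, and by observation \hyperref[j]{4.j} the entropy of a $(-+-)$-bimodal map coincides with that of its unimodal restriction to $[-1,0]$. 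Thus the problem becomes a statement purely about the unimodal interval maps in these families, with the $(-+-)$-bimodal region folded into the unimodal one.

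Next I would split the region into the closed subset $\Omega_3$ on which the maps have three real fixed points — the $(-+-)$-bimodal region together with the portion of the unimodal region below ${\rm{Per}}_1(1)$ — and the closed yellow subset $\Omega_1$ consisting of the unimodal maps above ${\rm{Per}}_1(1)$, which have a single real fixed point. Over $\Omega_3$ the level sets are connected by Theorem \ref{temp1} (restricting to $\Omega_3$ through Lemma \ref{point-set topology}(1) if necessary). To conclude monotonicity on $\Omega_3\cup\Omega_1$ I would then apply the gluing Lemma \ref{point-set topology}(2), whose only hypothesis beyond monotonicity on each piece is the image-matching condition $h_\Bbb{R}(\Omega_3\cap\Omega_1)=h_\Bbb{R}(\Omega_3)\cap h_\Bbb{R}(\Omega_1)$. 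Since $\Omega_3\cap\Omega_1$ is the segment of ${\rm{Per}}_1(1)$ inside the unimodal region, along which $f$ carries a parabolic fixed point of multiplier $+1$, and since $h_\Bbb{R}$ is continuous, I expect the entropy to vary through a full closed subinterval on this boundary, so that any value common to the two sides is attained on the boundary by the intermediate value theorem; this makes the matching condition routine.

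The crux is the monotonicity of $h_\Bbb{R}$ over the yellow region $\Omega_1$, and this is exactly where the method of Theorem \ref{temp1} breaks down: there all fixed points (the one real and the complex-conjugate pair) are repelling, so there is no attracting basin outside of which the map straightens to a quadratic polynomial, and the polynomial-like machinery of \S\ref{S6} is unavailable. Here I would abandon straightening and follow the Milnor--Tresser program for bimodal cubics \cite{MR1351522,MR1736945}: in the $(\mu,t)$-parametrization of $\mathcal{F}_2$ one studies the post-critical curves (\emph{bones}) cut out by imposing periodicity of the critical value, argues that these curves are connected and organize $\Omega_1$ in a way compatible with the kneading order, and reads off connectedness of the isentropes from the monotone dependence of the kneading data on $(\mu,t)$. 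This is precisely the strategy outlined in \S\ref{S6.2} and distilled into the more tractable sub-conjectures of Proposition \ref{conjectures}; making the bone-connectedness and kneading-monotonicity assertions rigorous is the main obstacle, and is what keeps Conjecture \ref{monotonicity 3} open (and, via Lemma \ref{point-set topology}(1) restricting to the unimodal region, the weaker Conjecture \ref{monotonicity 2} as well).
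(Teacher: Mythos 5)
This statement is Conjecture \ref{monotonicity 3}: the paper does not prove it, offering only the numerical evidence of \S\ref{S5} and the conditional reduction of Proposition \ref{conjectures}, so there is no proof to compare against and your proposal rightly stops short of claiming one. Your outline otherwise tracks the paper's: the part below ${\rm{Per}}_1(1)$ is handled by Theorem \ref{temp1} (Corollary \ref{monotonicity 1'}), the genuinely open ingredient is monotonicity over the unimodal classes above ${\rm{Per}}_1(1)$, where all fixed points are repelling and the straightening machinery of \S\ref{S6.1} is unavailable, and the intended substitute is the bone/kneading program of \S\ref{S6.2} together with the implications recorded in Proposition \ref{conjectures}. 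Your identification of the open crux is exactly right.

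There is, however, one concrete flaw in your reduction as written: the gluing step. You propose to apply Lemma \ref{point-set topology}(2) to $\Omega_3\cup\Omega_1$ with $\Omega_3\cap\Omega_1$ the segment of ${\rm{Per}}_1(1)$ inside the unimodal region, and you call the hypothesis $h_\Bbb{R}(\Omega_3\cap\Omega_1)=h_\Bbb{R}(\Omega_3)\cap h_\Bbb{R}(\Omega_1)$ ``routine.'' It is not. Since $\Omega_3$ contains the polynomial segment, $h_\Bbb{R}(\Omega_3)=\left[0,\log(2)\right]$, so the condition amounts to saying that \emph{every} entropy value attained anywhere in the yellow region is already attained on its lower boundary segment; the intermediate value theorem only gives you that the values on that segment form an interval, not that this interval exhausts $h_\Bbb{R}(\Omega_1)$. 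Proposition \ref{conjectures} sidesteps this entirely by choosing a different decomposition: it first assembles the whole unimodal region (this is where the bone conjectures enter) and then glues it to the $(-+-)$-bimodal region along the polynomial ray $\left\{\left\langle z^2+c\right\rangle\right\}_{c<0}$, on which every value in $\left[0,\log(2)\right]$ is realized, so the image-matching condition of Lemma \ref{point-set topology}(2) holds trivially. To repair your version you would either have to prove that the range of $h_\Bbb{R}$ over the ${\rm{Per}}_1(1)$ segment equals its range over all of $\Omega_1$, or simply adopt the paper's decomposition.
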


\begin{figure}[ht!]
\center
\includegraphics[width=10cm]{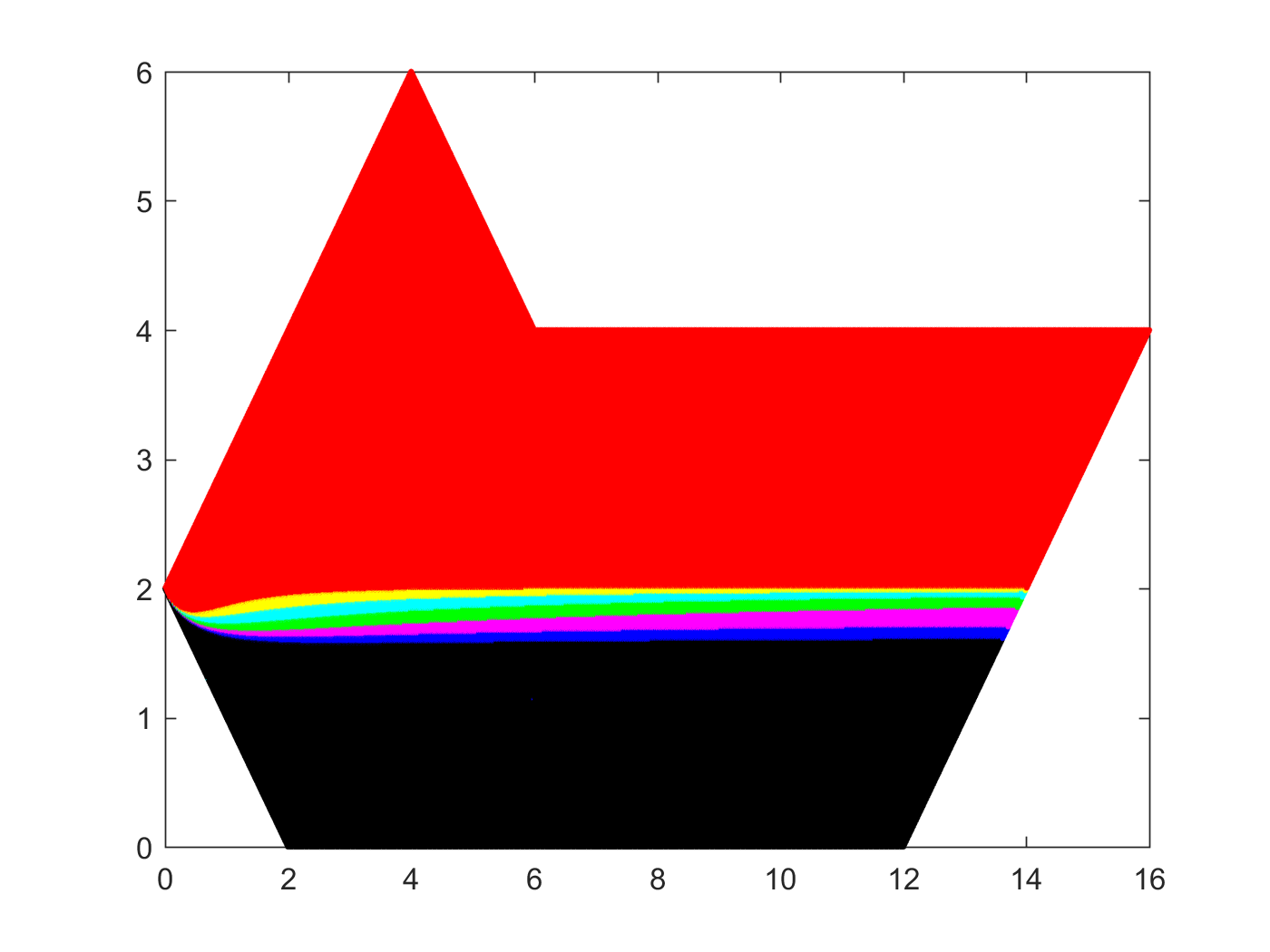}
\caption{A contour plot of the entropy function in the unimodal and $(-+-)$-bimodal regions of the first quadrant of the $(\mu,t)$-plane (Figure \ref{fig:parameter space}). Here the ordering of colors is black$<$blue$<$magenta$<$green$<$cyan$<$yellow$<$red and they  correspond to the partition $[0, 0.1)$, $[0.1, 0.25)$, $[0.25, 0.4)$, $[0.4, 0.48)$, $[0.48, 0.55)$, $[0.55, 0.65)$ and $[0.65, \log(2)]$ of  $[0,\log(2)\approx 0.7]$. The non-trivial entropy behavior occurs for the family $\mathcal{F}_1$ \eqref{family-1}
lying below the line $t=2$  since, according to observation \hyperref[h]{4.h}, the entropy is identically $\log(2)$ once $t\geq 2$.}
\label{fig:plot1}
\end{figure}

\begin{figure}[ht!]
\center
\includegraphics[width=9.5cm]{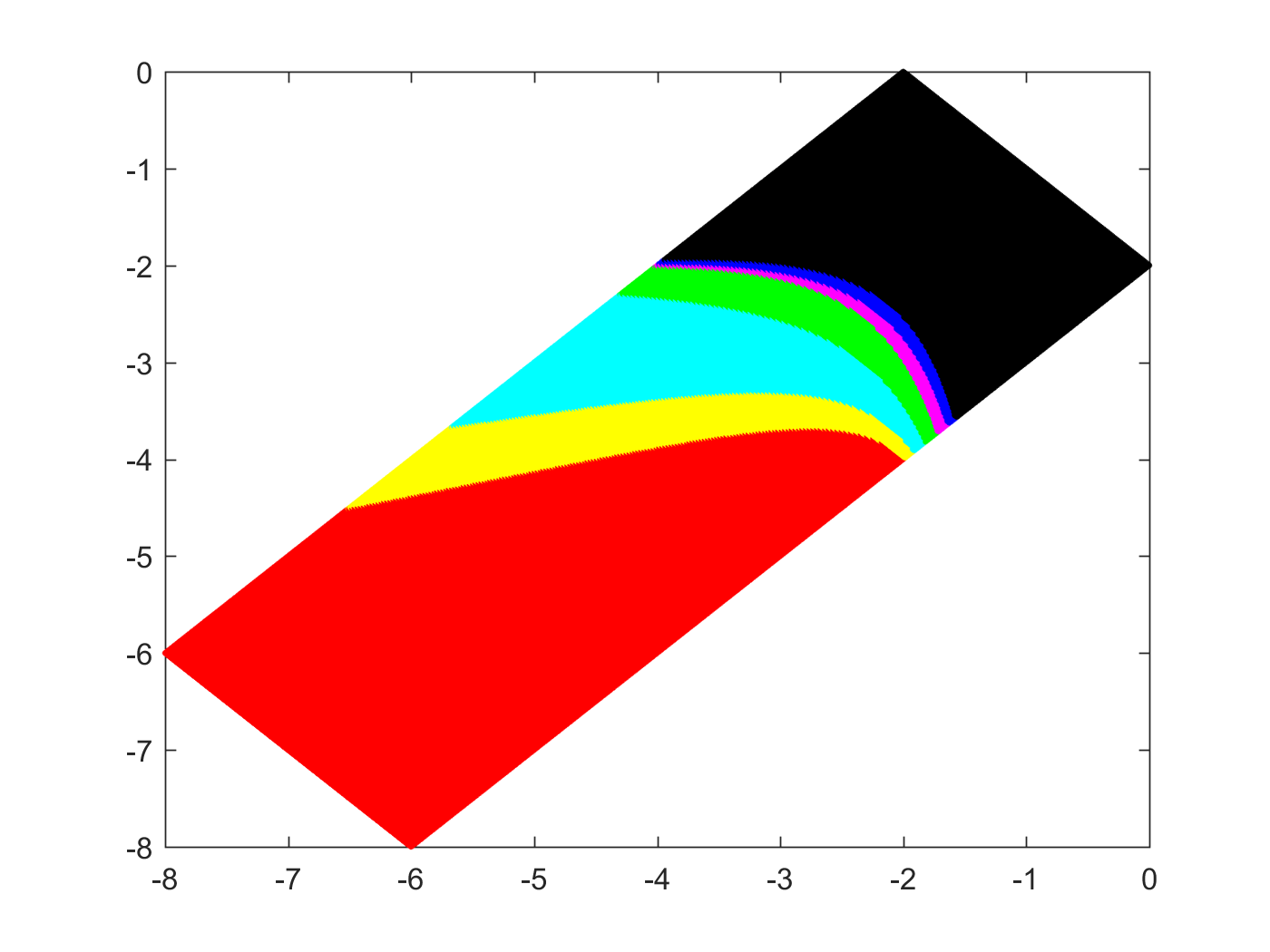}
\caption{An entropy contour plot in the unimodal region of the third quadrant of the $(\mu,t)$-plane (Figure \ref{fig:parameter space}); a region which is parametrized by the family $\mathcal{F}_2$ \eqref{family-2}. The coloring scheme is the same as that of Figure \ref{fig:plot1}.} 
\label{fig:plot2}
\end{figure}

\begin{figure}[ht!]
\center
\includegraphics[width=9.5cm]{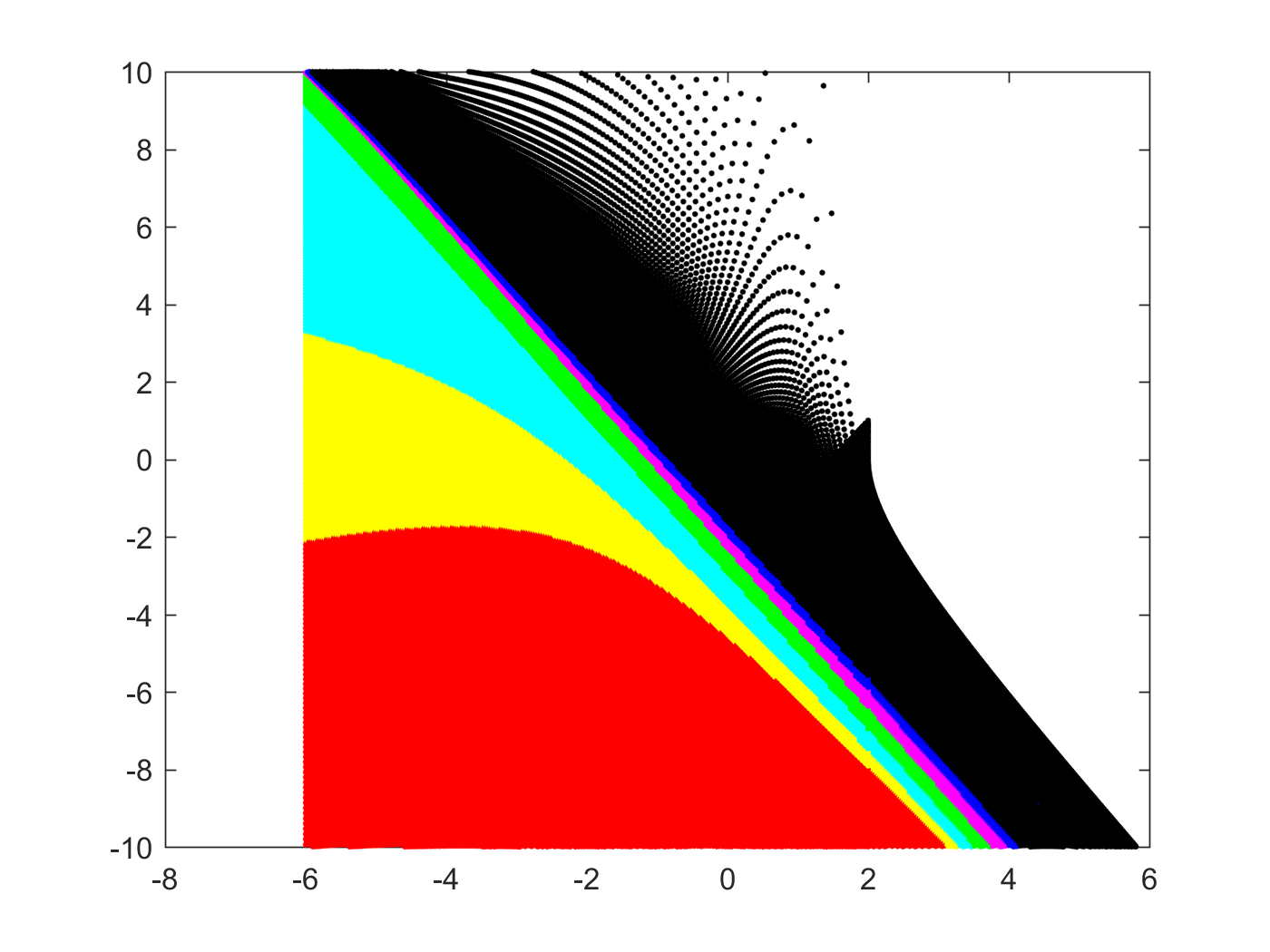}
\caption{An entropy contour plot in unimodal and $(-+-)$-bimodal regions of the moduli space (Figure \ref{fig:main}) obtained from projecting the contour plots illustrated in Figures \ref{fig:plot1}, \ref{fig:plot2} from the $(\mu,t)$-parameter space (Figure \ref{fig:parameter space}) to the moduli space via the map $F$ \eqref{the transformation}.}
\label{fig:plot3}
\end{figure}

\begin{remark}
One has $-6<\sigma_1<2$ in the open unimodal region of $\mathcal{M}_2(\Bbb{R})$ as it is apparent in Figure
\ref{fig:main}. Once $\sigma_1$ is within these bounds, $\sigma_2(\mu,t)$ from 
\eqref{the transformation} blows up as $\mu\to 0$. For this reason, in Figure \ref{fig:plot3} we have confined the projections of Figures \ref{fig:plot1}, \ref{fig:plot2} from the $(\mu,t)$-plane to the moduli space $\mathcal{M}_2(\Bbb{R})$ within the bounds $|\sigma_2|<10$. The discrepancy of black points in Figure \ref{fig:plot3} is also due to $\mu$ appearing in the denominator of $\sigma_2(\mu,t)$: near the $t$-axis the map $(\mu,t)\mapsto\left(\sigma_1(\mu,t),\sigma_2(\mu,t)\right)$ pulls black point of  Figure \ref{fig:plot2} apart. Nevertheless, Figure \ref{fig:plot3} 
(along with Figures \ref{fig:plot1} and \ref{fig:plot2}) can still be viewed as an evidence of connectedness of isentropes in unimodal and $(-+-)$-bimodal regions because the part of the contour plot that is not fully filled with black  lies almost entirely above lines ${\rm{Per}}_1(\pm 1)$ (see Figure \ref{fig:main}) and is thus in the $h_\Bbb{R}\equiv 0$ escape component (Proposition \ref{escape boundary}); so we have not missed any ``dynamically interesting'' part of the contour plot in Figure \ref{fig:plot3}.
\end{remark}

Finally, applying the algorithm from \cite{MR1151977} to the family of $(+-+)$-bimodal maps appeared in \eqref{family-3}
yields Figure \ref{fig:plot4} in the $(\mu,t)$-plane and the corresponding moduli space contour plot in Figure \ref{fig:plot5}.
They serve as the evidence for Conjecture \ref{temp2} on the failure of monotonicity.

\begin{remark}
The study of monotonicity in the $(\mu,t)$-plane rather than in the actual moduli space $\mathcal{M}_2(\Bbb{R})$ does not cause any problem with these conjectures: There is a  continuous map $F$ from the $(\mu,t)$-plane to the 
$\left(\sigma_1,\sigma_2\right)$-plane $\eqref{the transformation}$, and thus monotonicity in unimodal or $(-+-)$-bimodal regions of the former imply the same for the latter. Similarly, the failure of monotonicity in the $(+-+)$-bimodal region of the $(\mu,t)$-plane is equivalent to the same assertion for the moduli space as $F$ is injective over the ``interesting'' part; which is the part of the third quadrant that lies inside the parabola $\mu=1-\frac{t^2}{4}$; cf. Figure \ref{fig:transformation}. 
\end{remark}

\begin{figure}[ht!]
\center
\includegraphics[width=9.5cm]{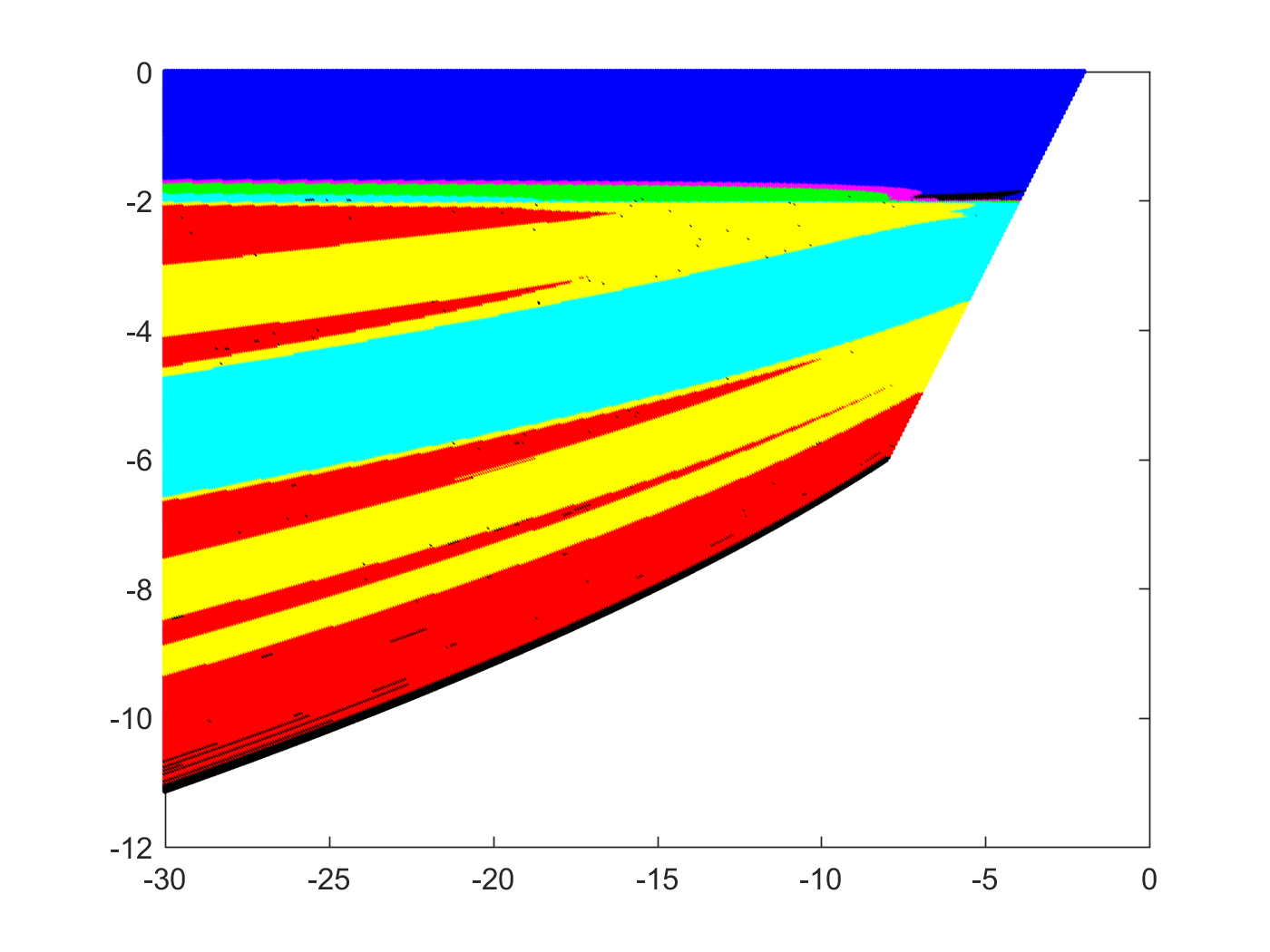}
\caption{An entropy contour plot for the part of the $(+-+)$-bimodal region of the third quadrant of the $(\mu,t)$-plane (Figure \ref{fig:parameter space}) that lies inside the parabola $\mu=1-\frac{t^2}{4}$ and is parametrized by the family $\mathcal{F}_3$ \eqref{family-3}.
Here the  colors blue, magenta, green, cyan, yellow and red correspond to the entropy being in intervals 
$[0,0.05)$, $[0.05,0.2)$, $[0.2,0.3)$, $[0.3,0.5)$, $[0.5,0.66)$ and $[0.66,\log(2)\approx 0.7]$ respectively. The black indicates 
the failure of the algorithm introduced in \cite{MR1151977}.}
\label{fig:plot4}
\end{figure}

\begin{figure}[ht!]
\center
\includegraphics[width=9.5cm]{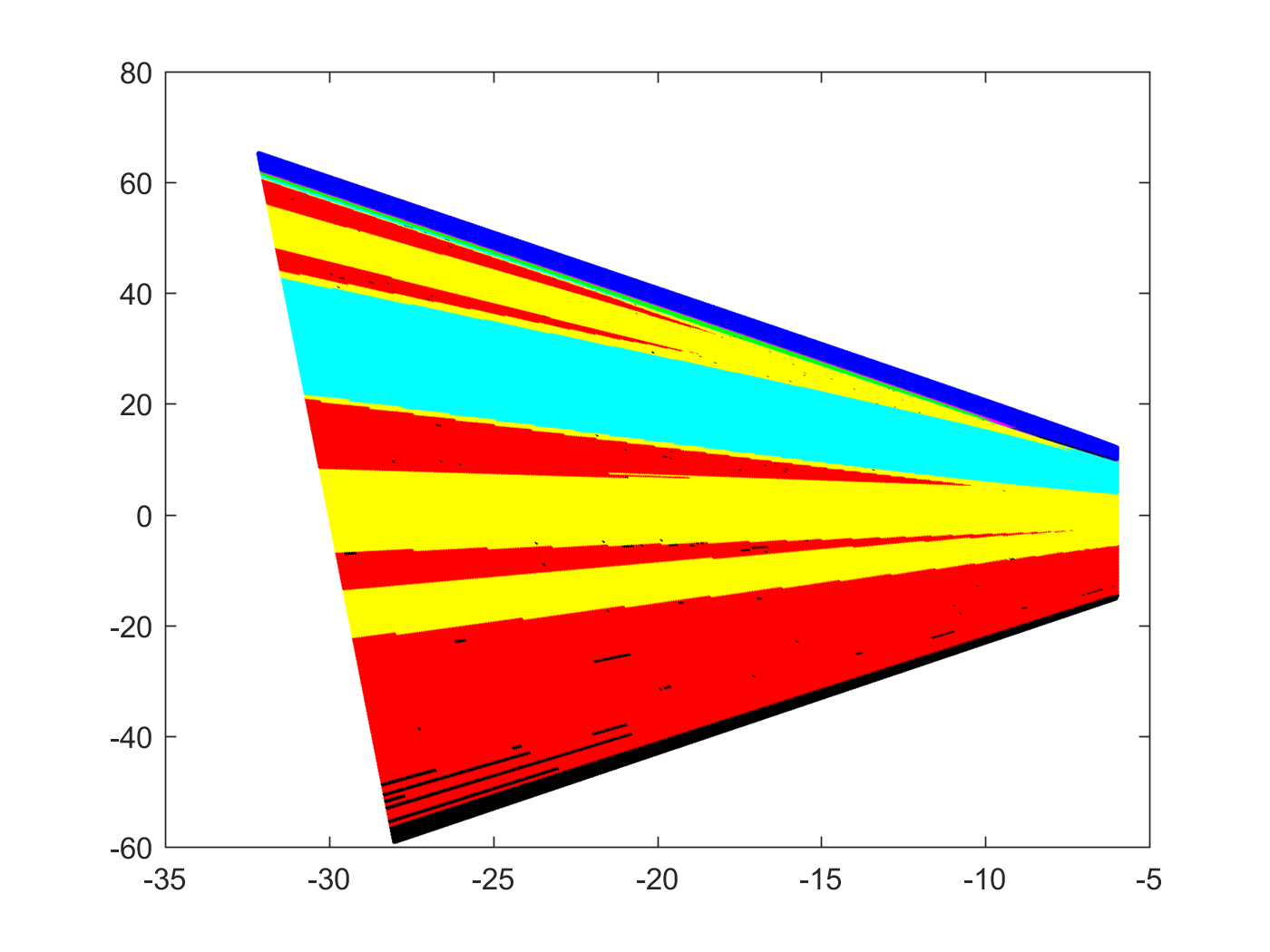}
\caption{Projecting the contour plot illustrated in Figure \ref{fig:plot4} from the $(\mu,t)$-parameter space to the moduli space via the map $F$ \eqref{the transformation} yields a contour plot in the $(+-+)$-bimodal region of the moduli space (Figure \ref{fig:main}). The lower skew boundary line is ${\rm{Per}}_{1}(1): 2\sigma_1-\sigma_2=3$ below which the real entropy is $\log(2)$ (observation \hyperref[i]{4.i}).}
\label{fig:plot5}
\end{figure}

\section{A monotonicity result}\label{S6}

\subsection{The straightening theorem}\label{S6.1}
According to observation \hyperref[f]{4.f} and Lemma \ref{attracting fixed point}, the hypotheses of Theorem \ref{temp1} hold precisely below the line ${\rm{Per}}_1(1)$ and guarantee the existence of a real attracting fixed point. 
Therefore, one can deduce Theorem \ref{temp1}  from Theorem \ref{monotonicity 1} below whose proof is the main goal of this subsection and will establish the connectedness of isentropes for the projection of Figure \ref{fig:plot1} to the moduli space.  

\begin{theorem}\label{monotonicity 1}
Restricted to  the part of the component of degree zero maps in $\mathcal{M}_2(\Bbb{R})-\mathcal{S}(\Bbb{R})$
that lies strictly below the line ${\rm{Per}}_1(1)$  the level sets of the function  $h_\Bbb{R}$ are connected. 
\end{theorem}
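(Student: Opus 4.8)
The plan is to use the real attracting fixed point that exists below ${\rm{Per}}_1(1)$ to realize each non-trivial map as polynomial-like, straighten it to a real quadratic polynomial of the same real entropy, and then invoke the classical monotonicity of $h_\mathbb{R}$ on the segment $\{z^2+c\}_{-2\le c\le \frac14}$. By observation \hyperref[f]{4.f} and Lemma~\ref{attracting fixed point}, strictly below ${\rm{Per}}_1(1)$ every class in the component of degree zero maps has three distinct real fixed points, one of which, say $p=p(f)$, is attracting with real multiplier. Working in the first-quadrant normal form of \S\ref{S4} and using observation \hyperref[j]{4.j}, I may moreover assume that $f$ restricts to a unimodal interval map carrying all of the real entropy, with $p$ lying in a complementary invariant interval on whose closure the dynamics is trivial.

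I would first stratify the region $R$ lying below ${\rm{Per}}_1(1)$ into the pieces on which $h_\mathbb{R}$ is already understood and the single piece that needs work. On the monotone-increasing sliver one has $h_\mathbb{R}\equiv 0$, and on the lower escape component (below ${\rm{Per}}_2(1)$, by Proposition~\ref{escape boundary}) one has $h_\mathbb{R}\equiv\log 2$; on either, level sets are trivially connected. Write $R^{\circ}$ for the remaining active part, where the unimodal restriction has connected filled Julia set. On $R^\circ$ I restrict $f$ over a suitable neighborhood of the invariant interval carrying the entropy to obtain a degree-two proper map $f\colon U\to V$ with $U\subset\subset V$ topological disks; since $f$ has real coefficients the disks and the auxiliary Beltrami coefficient can be taken symmetric under the involution \eqref{involution}, so the Douady--Hubbard straightening theorem \cite{MR816367} produces a \emph{real} quadratic polynomial, i.e. a parameter $c=c(f)\in[-2,\frac14]$, together with a hybrid conjugacy carrying the $\hat{\mathbb R}$-dynamics to real polynomial dynamics. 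The conjugacy preserves real entropy, so $h_\mathbb{R}(f)=h_\mathbb{R}(z^2+c(f))$; equivalently $h_\mathbb{R}=h_{\mathrm{poly}}\circ c$ on $R^\circ$, where $h_{\mathrm{poly}}(c):=h_\mathbb{R}(z^2+c)$ has connected level sets on $[-2,\frac14]$ by \cite{MR970571,MR762431,MR1351519}.

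It remains to upgrade the pointwise straightening to a continuous map $c\colon R^\circ\to[-2,\frac14]$ with connected fibers, which is exactly where the family-straightening estimates of \cite{MRUhre} enter: they allow the disks $U\subset\subset V$ and the quasiconformal surgery to be chosen continuously in $(\sigma_1,\sigma_2)$ while respecting the real structure, and they identify the transverse coordinate along a fiber with the multiplier of $p$, which traces a connected arc. Granting this, the surgery realizes $c$ as a locally trivial bundle with connected fibers over the contractible interval $[-2,\frac14]$, hence a trivial product $R^\circ\cong[-2,\frac14]\times(\mathrm{fiber})$ under which $h_\mathbb{R}$ is the composition of $h_{\mathrm{poly}}$ with the projection. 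Each level set $h_\mathbb{R}^{-1}(v)\cap R^\circ$ is then $h_{\mathrm{poly}}^{-1}(v)\times(\mathrm{fiber})$, a product of connected intervals and therefore connected. Finally I would glue $R^\circ$ to the two trivial strata using Lemma~\ref{point-set topology}(2), twice: the overlaps are the post-critical curve of Remark~\ref{escape boundary'} (where $c=-2$ and $h_\mathbb{R}=\log 2$) and the locus where $c=\frac14$ and $h_\mathbb{R}=0$, along which the compatibility condition $h(A\cap B)=h(A)\cap h(B)$ holds because the active stratum attains precisely the extreme values $\log 2$ and $0$ where it meets the escape and monotone strata.

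The main obstacle is this upgrade: making the straightening depend continuously on the map throughout the family and verifying that its fibers are connected arcs cut out by the multiplier of $p$. The delicate point is that the polynomial-like disks and the Beltrami form must vary continuously and compatibly with complex conjugation across all of $R^\circ$, and that the multiplier genuinely serves as a global transverse coordinate trivializing the straightening over the contractible base; controlling this uniformly in the family is precisely the role played by \cite{MRUhre}.
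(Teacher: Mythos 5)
Your proposal follows essentially the same route as the paper's proof: the real attracting fixed point guaranteed below ${\rm{Per}}_1(1)$ by observation \hyperref[f]{4.f} and Lemma \ref{attracting fixed point}, the conjugation-symmetric Douady--Hubbard straightening of Theorem \ref{straightening} producing a real $z^2+c$ of the same real entropy, Uhre's holomorphic motion (Theorem \ref{Uhre}) to control the straightening across the family, and the classical monotonicity of $h_\Bbb{R}$ on the real quadratic polynomials. The only real difference is presentational: where you assert a global product trivialization $R^\circ\cong[-2,\frac14]\times(\text{fiber})$ with the multiplier of $p$ as transverse coordinate, the paper instead writes each isentrope explicitly as a union of two connected ``rectangles'' --- one with $\lambda\in[0,1)$, $\mu>1$ and one with $\lambda\in(-1,0]$, $\mu<1$, the sign constraint coming from the fixed point formula \eqref{fixed point formula} --- meeting along the polynomial line where every entropy value is attained, which yields connectedness directly without having to justify local triviality of the straightening map.
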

The  proof  uses the Douady and Hubbard theory of polynomial-like mappings \cite{MR816367}. We first show that outside the escape locus\footnote{Recall that by observation \hyperref[h]{4.h} the  family $\mathcal{F}_1$  \eqref{family-1} -- on which  Figure \ref{fig:plot1} is based --  is away from the escape locus.} one can quasi-conformally perturb a real attracting fixed point to make it  super-attracting without changing the real entropy.

\begin{theorem}\label{straightening}
Let $f(z)\in\Bbb{R}(z)$ be a real quadratic rational map that is not in the escape locus and admits a real attracting fixed point. 
Then, there exists a  real quadratic polynomial $p$ which is quasi-conformally conjugate to $f$ outside a neighborhood of the attracting fixed point via a homeomorphism $h$ satisfying $\bar\partial h=0$ on the filled Julia set of $p$ (i.e. a hybrid equivalence). Such a polynomial has the same real entropy as $f$ and is unique up to an affine conjugacy.  
\end{theorem}

\begin{proof}
We construct a polynomial-like map out of $f$ following an idea that has been alluded to on
\cite[p. 482]{MR1806289}. Pick a compact topological disk $N$ in the basin of attraction of the real fixed point such that $f(N)\subset{\rm{int}}(N)$, $N$ is invariant under complex conjugation, and $\partial N$ passes through a critical point of $f$.
Then 
\begin{equation}\label{poly-like}
\hat{\Bbb{C}}-f^{-1}(N)\stackrel{f}{\rightarrow}\hat{\Bbb{C}}-N.
\end{equation}
is a degree two polynomial-like map whose filled Julia set is connected and commutes with complex conjugation. Invoking the \textit{Douady-Hubbard straightening theorem} \cite[Theorem 1]{MR816367}, there exists a quasi-conformal homeomorphism  $h:\hat{\Bbb{C}}\rightarrow\hat{\Bbb{C}}$ -- also commuting with the complex conjugation -- that  induces a conjugacy from
the dynamics of a real quadratic polynomial $p$ on its filled Julia set onto  the dynamics  of \eqref{poly-like} on its filled Julia set, and satisfies $\bar\partial h=0$ on the filled Julia set $\mathcal{K}_p$. Forming intersections of the aforementioned filled Julia sets with the real axis, we conclude that $h$ restricts to a conjugacy between two dynamical systems of topological entropies  $h_{\Bbb{R}}(f)$ and $h_{\Bbb{R}}(p)$ respectively; so  $h_{\Bbb{R}}(f)=h_{\Bbb{R}}(p)$. 
The uniqueness part follows from  and the fact that filled Julia set of \eqref{poly-like} is connected due to  \cite[Proposition 2]{MR816367}.
\end{proof}

 Given $\lambda\in(-1,1)$, Theorem \ref{straightening} above assigns to each point of ${\rm{Per}}_1(\lambda)(\Bbb{R})-E$ ($E$ the escape component, see \S\ref{S3.1})
a real quadratic polynomial of the same real entropy that corresponds to a real point of the Mandelbrot set (keep in mind that the filled Julia sets of the maps appearing in the proof of Theorem \ref{straightening} are connected). One needs to keep track of how the projections ${\rm{Per}}_1(\lambda)(\Bbb{R})-E\rightarrow\mathbf{M}\cap\Bbb{R}$ vary as $\lambda$ changes in $(-1,1)$. 
To do so, one can consider the more general setting of the complex quadratic rational maps possessing an attracting fixed point of multiplier $\lambda\in\Bbb{D}$. 
It is more convenient to use the fixed-point normal form \eqref{fixed-point normal form}
\begin{equation}\label{fixed-point normal form 1}
f_{\lambda,\mu}(z):=\frac{z^2+\mu z}{\lambda z+1}
\end{equation}
where two fixed points  $\infty,0$ of multipliers respectively $\lambda,\mu\neq\frac{1}{\lambda}$ are prescribed.
We assume that the former is attracting, i.e. $\lambda\in\Bbb{D}$. Then, just like the case of quadratic polynomials, there is a dichotomy regarding the Julia set: It is connected if and only if the attracting basin $\mathcal{A}_{\lambda,\mu}(\infty)$ contains precisely one critical point; otherwise, both critical orbits tend to $\infty$ and $f_{\lambda,\mu}$ thus lies in the escape locus $E$ with a Cantor Julia set \cite[Lemma 8.2]{MR1246482}. Hence it makes sense to define ``the filled Julia set'' $\mathcal{K}_{\lambda,\mu}$ of 
$f_{\lambda,\mu}$
as
\begin{equation}\label{filled Julia set 1}
\mathcal{K}_{\lambda,\mu}:=\hat{\Bbb{C}}-\mathcal{A}_{\lambda,\mu}(\infty);
\end{equation}
and the ``connectedness locus'' in the $\mu$-parameter plane $\Bbb{C}_{\lambda}:=\Bbb{C}-\{\frac{1}{\lambda}\}$ 
as 
\begin{equation}\label{Mandelbrot variant}
\mathbf{M}_{\lambda}:=\left\{\mu\in\Bbb{C}_{\lambda}\mid \left\langle f_{\lambda,\mu}\right\rangle\notin E\right\}=
\left\{\mu\in\Bbb{C}_{\lambda}\mid \mathcal{K}_{\lambda,\mu} \text{ is connected}\right\}
\end{equation}  
for any $\lambda$ in the open unit disk. Obviously, for $\lambda=0$ the map in 
\eqref{fixed-point normal form 1} is a polynomial with $\mathcal{K}_{0,\mu}$ its filled Julia set in the usual sense, and 
$\mathbf{M}_0$ 
would be the connectedness locus in the parameter space of quadratic polynomials $z^2+\mu z$ 
(\cite[Figure 29]{MR2193309})
which is  a branched double cover of the  Mandelbrot  set via the map
\begin{equation}\label{double cover}
\mathbf{M}_0\rightarrow\mathbf{M}:\mu\mapsto c:=\frac{\mu}{2}-\frac{\mu^2}{4};
\end{equation}
under which both subintervals $[1,4]$ and $[-2,1]$
of $\mathbf{M}_0\cap\Bbb{R}=[-2,4]$
biject onto the real slice $\mathbf{M}\cap\hat{\Bbb{R}}=\left[-2,\frac{1}{4}\right]$ of the Mandelbrot set. In particular, invoking the monotonicity of entropy for quadratic polynomials \cite[Corollary 13.2]{MR970571}, the function 
$$\mu\mapsto h_\Bbb{R}\left(z\mapsto z^2+\mu z\right)$$
is monotonic on each of subintervals $[-2,1]$ and $[1,4]$, but not on their union.

The following theorem of Uhre is all we need to control the straightening in a family: 
\begin{theorem}\label{Uhre}
\cite[Theorem 8.1]{MRUhre} There is a holomorphic motion
$\Phi:\Bbb{D}\times \mathbf{M}_0\rightarrow\Bbb{C}$
 of the connectedness locus $\mathbf{M}_0$ where the base point of the motion is $\lambda=0$ and $\Phi$ bijects the slice $\{\lambda\}\times\mathbf{M}_0$ onto $\mathbf{M}_{\lambda}$. This motion moreover respects the dynamics: There is a
 q.c. homeomorphism $h_{\lambda,\mu}:\hat{\Bbb{C}}\rightarrow\hat{\Bbb{C}}$ preserving the origin and the point at infinity that conjugates the dynamics of 
 $f_{0,\mu}$ in a neighborhood of $\mathcal{K}_{0,\mu}$ with the dynamics of $f_{\lambda,\Phi(\lambda,\mu)}$ in a neighborhood of $\mathcal{K}_{\lambda,\Phi(\lambda,\mu)}$, i.e.
 $h_{\lambda,\mu}\circ f_{0,\mu}=f_{\lambda,\Phi(\lambda,\mu)}\circ h_{\lambda,\mu}$ near 
 $\mathcal{K}_{0,\mu}$.
\end{theorem}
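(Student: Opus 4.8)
The plan is to realize $\Phi$ as the straightening of Theorem \ref{straightening} run \emph{backwards} and made to depend holomorphically on the multiplier $\lambda$, i.e. as a parametrized quasiconformal surgery. Fix $\mu\in\mathbf{M}_0$, so that $f_{0,\mu}(z)=z^2+\mu z$ has connected filled Julia set $\mathcal{K}_{0,\mu}$ and a super-attracting fixed point at $\infty$ whose B\"ottcher coordinate conjugates $f_{0,\mu}$ on $\mathcal{A}_{0,\mu}(\infty)=\hat{\Bbb{C}}-\mathcal{K}_{0,\mu}$ to $w\mapsto w^2$ on $\hat{\Bbb{C}}-\overline{\Bbb{D}}$. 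I would then excise a neighborhood of $\infty$ and glue in a Koenigs model carrying an attracting fixed point of multiplier $\lambda$, interpolating quasiconformally across a single fundamental annulus while leaving $f_{0,\mu}$ untouched near $\mathcal{K}_{0,\mu}$. This is exactly the inverse of the model map constructed in the proof of Theorem \ref{straightening}, and it produces a degree-two quasiregular $G_\lambda$ with $\infty$ an attracting fixed point of multiplier $\lambda$ and exactly one captured critical point (the free critical point $-\mu/2$ of $f_{0,\mu}$ remains inside $\mathcal{K}_{0,\mu}$ since $\mu\in\mathbf{M}_0$).

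Next I would build a $G_\lambda$-invariant Beltrami coefficient $\eta_\lambda$, declared standard on $\mathcal{K}_{0,\mu}$ and on the model neighborhood of $\infty$, non-standard only on the interpolating annulus, and spread over the rest of the basin by pulling back under the iterates of $G_\lambda$. Because the non-standard part lives in a fundamental domain and its forward-disjoint preimages, the dilatation stays uniformly bounded. Straightening $\eta_\lambda$ by the measurable Riemann mapping theorem yields a quasiconformal $h_{\lambda,\mu}$, normalized to fix $0,1,\infty$, for which $h_{\lambda,\mu}\circ G_\lambda\circ h_{\lambda,\mu}^{-1}$ is a quadratic rational map; since $\eta_\lambda$ is standard near $\infty$, $h_{\lambda,\mu}$ is conformal there and the multiplier at the attracting fixed point is preserved, so after the obvious normalization this map is some $f_{\lambda,\Phi(\lambda,\mu)}$ in the form \eqref{fixed-point normal form 1} with connected $\mathcal{K}_{\lambda,\Phi(\lambda,\mu)}$, i.e. $\Phi(\lambda,\mu)\in\mathbf{M}_\lambda$. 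As $G_\lambda=f_{0,\mu}$ near $\mathcal{K}_{0,\mu}$, the map $h_{\lambda,\mu}$ conjugates $f_{0,\mu}$ near $\mathcal{K}_{0,\mu}$ to $f_{\lambda,\Phi(\lambda,\mu)}$ near $\mathcal{K}_{\lambda,\Phi(\lambda,\mu)}$, which is the dynamics-respecting clause.

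To obtain the holomorphic-motion properties I would arrange the surgery model so that $\eta_\lambda$ depends holomorphically on $\lambda\in\Bbb{D}$ with $\eta_0\equiv 0$. The Ahlfors--Bers parametrized measurable Riemann mapping theorem then makes $\lambda\mapsto h_{\lambda,\mu}$, hence $\lambda\mapsto\Phi(\lambda,\mu)$, holomorphic, with $\Phi(0,\mu)=\mu$. That each slice map $\mu\mapsto\Phi(\lambda,\mu)$ is injective and carries $\mathbf{M}_0$ \emph{onto} $\mathbf{M}_\lambda$ follows from the straightening theorem: surjectivity because every $\nu\in\mathbf{M}_\lambda$ straightens back to a quadratic polynomial $z^2+\mu z$ through \eqref{double cover}, and injectivity because the polynomial obtained from straightening is unique up to affine conjugacy \cite[Theorem 1]{MR816367}, so $\Phi(\lambda,\mu)=\Phi(\lambda,\mu')$ forces $\mu=\mu'$. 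These three facts together are precisely the assertion that $\Phi$ is a holomorphic motion of $\mathbf{M}_0$ based at $\lambda=0$ with slices $\mathbf{M}_\lambda$.

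The hard part will not be the pointwise surgery but its \emph{uniformity}: one must choose the interpolating model so that $\eta_\lambda$ has dilatation bounded uniformly in $\lambda$ and in $\mu\in\mathbf{M}_0$ and varies holomorphically in $\lambda$, including the degenerate transition $\lambda\to 0$ where the Koenigs linearization domain collapses into the B\"ottcher coordinate and its boundary runs into a critical point --- exactly the phenomenon exploited in the proof of Theorem \ref{straightening}. Controlling this degeneration uniformly over the whole connectedness locus, in particular up to $\partial\mathbf{M}_0$ where hyperbolicity is lost, is the delicate step. Once a uniformly bounded, holomorphically varying $\eta_\lambda$ is in hand the conclusion follows as above; alternatively, having established the construction on a dense dynamically defined subset, one could invoke the $\lambda$-lemma (or a Bers--Royden type extension) to promote it to the full holomorphic motion asserted in the statement.
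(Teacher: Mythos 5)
The paper does not actually prove this statement: Theorem \ref{Uhre} is imported wholesale from Uhre's thesis \cite[Theorem 8.1]{MRUhre}, and the only comment the paper offers on its proof is the remark that it rests on the Branner--Hubbard holomorphic motion. Your plan --- a parametrized quasiconformal surgery that turns the superattracting fixed point of $f_{0,\mu}$ at $\infty$ into an attracting one of multiplier $\lambda$ --- is the right circle of ideas and close in spirit to what Uhre does, but as written it has several concrete gaps.

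First, a topological error in the surgery itself: you cannot excise a neighborhood of $\infty$, glue in a (degree-one) Koenigs model, and ``interpolate quasiconformally'' across an annulus. Since $f_{0,\mu}$ has local degree $2$ at $\infty$, the two boundary circles of the interpolating annulus would have to be covered with different degrees, which is impossible for a quasiconformal (hence injective) interpolation and indeed for any proper annulus map. The glued-in model must itself be a degree-two map with an attracting fixed point of multiplier $\lambda$ (e.g.\ a quadratic Blaschke product); that model is also where the captured critical point of $G_\lambda$, which you correctly predict, actually comes from. Second, your injectivity argument is incomplete: uniqueness of straightening pins down only the affine conjugacy class of the polynomial, i.e.\ the point $c\in\mathbf{M}$, and by \eqref{double cover} each such $c$ has the two preimages $\mu$ and $2-\mu$ in $\mathbf{M}_0$; so $\Phi(\lambda,\mu)=\Phi(\lambda,\mu')$ only yields $\mu'\in\{\mu,2-\mu\}$, and an extra argument (a marked fixed point, or continuity from the identity at $\lambda=0$) is needed. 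Symmetrically, surjectivity requires knowing that straightening $f_{\lambda,\nu}$ and then re-performing the surgery returns the same parameter $\nu$; this amounts to injectivity of $\nu\mapsto(\text{hybrid class})$ on the slice $\mathbf{M}_\lambda$ and is not a formal consequence of \cite[Theorem 1]{MR816367}. Third --- and you concede this --- the holomorphic dependence of $\eta_\lambda$ on $\lambda$, with dilatation bounded uniformly over all of $\mathbf{M}_0$ including its boundary and through the degeneration at $\lambda=0$, is the actual content of the theorem and is left unproved. The $\lambda$-lemma cannot close that gap: it upgrades a holomorphic motion that already exists (injective in $\mu$, holomorphic in $\lambda$) on a dense subset, and that existence is precisely what is at stake. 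These are the points the Branner--Hubbard motion is designed to handle, which is why the paper outsources the entire statement to \cite{MRUhre}.
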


\begin{remark}
The proof of Theorem \ref{Uhre} in \cite{MRUhre} is based on the \textit{Branner-Hubbard holomorphic motion} \cite{MR2348954}. A similar situation is discussed in \cite[\S 3]{MR1062965} as well: Consider  quadratic rational maps 
$\frac{1}{\lambda}\left(z+\frac{1}{z}+b\right)$ 
with an attracting fixed point at $\infty$. For different multipliers $\lambda\in\Bbb{D}-\{0\}$ the connectedness loci in the $b$-plane are all homeomorphic. 
\end{remark}

\begin{proof}[Proof of Theorem \ref{monotonicity 1}]
The half  of the degree zero component of 
$\mathcal{M}_2(\Bbb{R})-\mathcal{S}(\Bbb{R})$
that in Figure \ref{fig:colored} lies below the line
$${\rm{Per}}_1(1):\sigma_2=2\sigma_1-3$$
is characterized by the existence of three real fixed points; cf. observation \hyperref[f]{4.f}. 
By Lemma \ref{attracting fixed point}, at least one of these fixed points should be attracting. Hence, after a real change of coordinates, one can always assume that there is an attracting fixed point of multiplier $\lambda\in (-1,1)$ at $\infty$ and a fixed point at $0$ whose multiplier belongs to $\Bbb{R}-\{1\}$. 
Therefore,  there exists a representative of the form \eqref{fixed-point normal form 1} for any such point of $\mathcal{M}_2(\Bbb{R})-\mathcal{S}(\Bbb{R})$. 
There are two choices $\mu,\mu'$ for the multiplier of the fixed point $0$ that, according to \eqref{fixed point formula}, satisfy 
$$
\frac{1}{1-\mu}+\frac{1}{1-\mu'}=\frac{\lambda}{\lambda-1}.
$$
The formula suggests that when $0\leq\lambda<1$ either $\mu>1$ or $\mu'>1$ while when $-1<\lambda\leq 0$ either $\mu<1$ or $\mu'<1$.
This discussion results in the following:
\begin{equation}\label{auxiliary 1}
\begin{split}
&\left\{\langle f\rangle\in\mathcal{M}_2(\Bbb{R})\mid f\in {\rm{Rat}}_2(\Bbb{R}) \text{ with three distinct real fixed points one of them attracting} \right\}\\
&=\left\{\langle f_{\lambda,\mu}\rangle \mid \lambda\in [0,1), \mu>1 \right\}\bigcup 
\left\{\langle f_{\lambda,\mu}\rangle \mid \lambda\in (-1,0], \mu<1\right\}.
\end{split}
\end{equation}
Away from the component of degree zero maps, if the restriction
$f\restriction_{\hat{\Bbb{R}}}:\hat{\Bbb{R}}\rightarrow\hat{\Bbb{R}}$
is a covering map with a real attracting fixed point, it must belong to the escape locus $E$: the critical points are complex conjugate and so if  under iteration one of them converges to a real cycle, the other critical orbit behaves the same way. Hence \eqref{auxiliary 1} can be rewritten as:
\begin{equation}\label{auxiliary 1'}
\begin{split}
&\text{the part of the degree zero component of } \mathcal{M}_2(\Bbb{R})-\mathcal{S}(\Bbb{R}) \text{  below } {\rm{Per}}_1(1) \text{ and away from }E\\
&=\left\{\langle f_{\lambda,\mu}\rangle \mid \lambda\in [0,1), \mu>1 \right\}\bigcup 
\left\{\langle f_{\lambda,\mu}\rangle \mid \lambda\in (-1,0], \mu<1\right\}-E.
\end{split}
\end{equation}
\indent
For $\lambda,\mu$ real and $\langle f_{\lambda,\mu}\rangle$ away from the escape locus $E$,  we have already constructed in Theorem \ref{straightening} a quasi-conformal conjugacy in vicinity of filled Julia sets which preserves the real entropy. 
So  for $\lambda\in(-1,1)$ and $\mu\in\mathbf{M}_0\cap\Bbb{R}$ in Theorem \ref{Uhre},  $\Phi(\lambda,\mu)$ should be real and the real quadratic rational map $f_{\lambda,\Phi(\lambda,\mu)}$ and the real quadratic polynomial $z^2+\mu z$ are of the same real entropy. In particular, given $\lambda\in (-1,1)$, $\Phi$ induces a homeomorphism 
\begin{equation}\label{auxiliary 2}
\Phi^{\lambda}_\Bbb{R}:[-2,4]=\mathbf{M}_0\cap\Bbb{R}\rightarrow\mathbf{M}_\lambda\cap\Bbb{R}
\end{equation}
satisfying
\begin{equation}\label{auxiliary 3}
h_\Bbb{R}\left(z\mapsto z^2+\mu z\right)=h_\Bbb{R}\left(f_{\lambda,\Phi^{\lambda}_\Bbb{R}(\mu)}\right).
\end{equation}
This will be proved in Lemma \ref{last}.
Next notice that in general the fixed point $0$ of multiplier $\mu'$ belongs to the filled Julia set $\mathcal{K}_{\lambda,\mu'}$ of $f_{\lambda,\mu'}$ (as defined in \eqref{filled Julia set 1}) and the topological behavior around a parabolic point is different from that for an attracting or repelling point. Thus, in Theorem \ref{Uhre}, the mere fact that  the
dynamics of $f_{0,\mu}$ over an open neighborhood of $\mathcal{K}_{0,\mu}$ is conjugate to the dynamics of 
$f_{\lambda,\Phi(\lambda,\mu)}$ over an open neighborhood of $\mathcal{K}_{\lambda,\Phi(\lambda,\mu)}$ guarantees that 
$\Phi(\lambda,1)\equiv 1$.  We conclude that 
$\Phi^\lambda_\Bbb{R}$  bijects $[-2,1)$, $(1,4]$   onto 
$\mathbf{M}_\lambda\cap(-\infty,1)$,
$\mathbf{M}_\lambda\cap(1,\infty)$
respectively; and furthermore, for  any arbitrary real  entropy value $h_0\in\left[0,\log(2)\right]$ and any fixed multiplier $\lambda_0\in (-1,1)$:
\begin{equation}\label{auxiliary 4}
\begin{split}
&\left\{\mu<1 \mid \langle f_{\lambda_0,\mu}\rangle\notin E,  h_{\Bbb{R}}(f_{\lambda_0,\mu})=h_0 \right\}
=\Phi\left(\left\{\lambda_0\right\}\times \left\{\mu_0\in[-2,1) \mid h_{\Bbb{R}}\left(z\mapsto z^2+\mu_0 z\right)=h_0 \right\}\right);\\
&\left\{\mu>1 \mid \langle f_{\lambda_0,\mu}\rangle\notin E, h_{\Bbb{R}}(f_{\lambda_0,\mu})=h_0 \right\}
=\Phi\left(\left\{\lambda_0\right\}\times \left\{\mu_0\in (1,4] \mid h_{\Bbb{R}}\left(z\mapsto z^2+\mu_0 z\right)=h_0 \right\}\right).
\end{split}
\end{equation}
Combining \eqref{auxiliary 1'} and \eqref{auxiliary 4} implies that the set
\begin{equation}\label{auxiliary 5}
\left\{\langle f\rangle \text{ in the degree zero component of }  \mathcal{M}_2(\Bbb{R})-\mathcal{S}(\Bbb{R}) 
\text{ below } {\rm{Per}}_1(1)\, \big|\, \langle f\rangle\notin E,  h_{\Bbb{R}}(f)=h_0\right\}
\end{equation}
can be written as the union of 
$$
\left\{\langle f_{\lambda,\mu}\rangle\,|\, \lambda\in[0,1), \mu\in\Phi\left([0,1)\times \left\{\mu_0\in(1,4]\, \big| \,
 h_{\Bbb{R}}\left(z\mapsto z^2+\mu_0 z\right)=h_0 \right\}\right)\right\}$$
and 
$$
\left\{\langle f_{\lambda,\mu}\rangle\,|\, \lambda\in(-1,0], \mu\in\Phi\left(\left(-1,0\right]\times \left\{\mu_0\in[-2,1)\, \big| \,
 h_{\Bbb{R}}\left(z\mapsto z^2+\mu_0 z\right)=h_0 \right\}\right)\right\}.$$
Each of these subsets is connected being homeomorphic to a (possibly degenerate) rectangle. 
Besides, they intersect due to the fact that along the subinterval
$$
\left\{\langle f_{0,\mu_0}\rangle\,|\,  \mu_0\in [-2,1)\right\}
=\left\{\langle f_{0,\mu_0}\rangle\,|\,  \mu_0\in (1,4]\right\}
=\left\{\langle z^2+c\rangle\,\Big|\, c\in \left[-2,\frac{1}{4}\right)\right\}
$$ 
of the polynomial line every entropy value in $h_0\in \left[0,\log(2)\right]$ is achieved. We conclude that the isentrope \eqref{auxiliary 5} is connected. 
In \eqref{auxiliary 5}, only the lower real escape component  below ${\rm{Per}}_1(1)$ should be excluded, and this is the real escape component over which $h_\Bbb{R}\equiv \log(2)$ (the maps have just one real fixed point in the  $h_\Bbb{R}\equiv 0$ escape component ; see \S\ref{S3.2}).  So to finish the proof, it suffices to establish the connectedness of the isentrope $h_\Bbb{R}=\log(2)$  in the portion of the component of degree zero maps described in the theorem. We claim that this isentrope is the closure of the $h_\Bbb{R}\equiv \log(2)$ escape component. 
The classes in the isentrope $h_\Bbb{R}=\log(2)$ should either be in the $h_\Bbb{R}\equiv \log(2)$ escape component of the component of degree zero maps in $\mathcal{M}_2(\Bbb{R})-\mathcal{S}(\Bbb{R})$ or in the form of 
$\left\langle f_{\lambda,\Phi^{\lambda}_\Bbb{R}(\mu)}\right\rangle$ where the second subscript belongs to the connectedness locus in the parameter plane $\Bbb{C}_{\lambda}$. 
Equation \eqref{auxiliary 3} describes the real entropy of this class as $h_\Bbb{R}\left(z\mapsto z^2+\mu z\right)$ which obtains its maximum possible value  $\log(2)$ only for  $f_{0,-2}(z)=z^2-2z, f_{0,4}(z)=z^2+4z$. These polynomials  are conjugate to the normalized Chebyshev polynomial $z^2-2$ and correspond to endpoints of $\mathbf{M}_0\cap\Bbb{R}$. Under $\Phi^\lambda_{\Bbb{R}}$, they are mapped onto the endpoints of the interval $\mathbf{M}_\lambda\cap\Bbb{R}$. Therefore, when $h_0=\log(2)$, the level set \eqref{auxiliary 5} can be written as 
$$
\left\{\langle f_{\lambda,\Phi(\lambda,4)}\rangle \mid \lambda\in [0,1)\right\}\bigcup 
\left\{\langle f_{\lambda, \Phi(\lambda,-2)}\rangle \mid \lambda\in (-1,0]\right\},
$$
which is a union of two connected curves intersecting  at 
$$
\langle f_{0,4}\rangle=\langle f_{0,-2}\rangle=\langle z^2-2\rangle;
$$
and is thus connected. Compare to Proposition \ref{escape boundary}: this is a part of the boundary of the
 $h_\Bbb{R}\equiv \log(2)$ escape component with the rest  of the boundary lying on the line ${\rm{Per}}_1(1)$. 
This concludes the proof.
\end{proof}

\begin{lemma}\label{last}
Notations as in Theorem \ref{Uhre}, for any $\lambda\in (-1,1)$: $\Phi(\lambda,\mu)\in\Bbb{R}\Leftrightarrow \mu\in\Bbb{R}$. 
\end{lemma}

\begin{proof}
Suppose $\lambda\in (-1,1)$. Conjugating all maps appearing in 
$h_{\lambda,\mu}\circ f_{0,\mu}=f_{\lambda,\Phi(\lambda,\mu)}\circ h_{\lambda,\mu}$
with $z\mapsto\bar{z}$ (cf. \eqref{involution}), one obtains 
$\widetilde{h_{\lambda,\mu}}\circ f_{0,\bar{\mu}}=f_{\lambda,\overline{\Phi(\lambda,\mu)}}\circ\widetilde{h_{\lambda,\mu}}$. If $\Phi(\lambda,\mu)\in\Bbb{R}$, then $\widetilde{h_{\lambda,\mu}}^{-1}\circ h_{\lambda,\mu}$ yields a hybrid equivalence between $z^2+\mu z$ and $z^2+\bar{\mu}z$ preserving the fixed point at the origin. Thus $\mu=\bar{\mu}$. Conversely, if $\mu\in\Bbb{R}$, then away from the basin of the attracting point at infinity,  
$\widetilde{h_{\lambda,\mu}}\circ h_{\lambda,\mu}^{-1}$ yields a hybrid equivalence between the polynomial-like maps corresponding to $f_{\lambda,\Phi(\lambda,\mu)}$ and $f_{\lambda,\overline{\Phi(\lambda,\mu)}}$ (cf. \eqref{poly-like}). But this conjugacy preserves the fixed point at the origin. Thus the multipliers there should coincide: 
$\Phi(\lambda,\mu)=\overline{\Phi(\lambda,\mu)}$.  
\end{proof}

\begin{proof}[Proof of Theorem \ref{temp1}]
Theorem \ref{monotonicity 1} -- that we just proved -- is simply a reformulation of Theorem \ref{temp1}:  According to  observation \hyperref[f]{4.f}, in the component of degree zero maps (defined in \S\ref{S2.4} by the absence of non-real critical points) the maps have three distinct real fixed points precisely when the corresponding classes are below the line  ${\rm{Per}}_1(1)$.
\end{proof}

\begin{corollary}\label{monotonicity 1'}
The entropy is monotonic on both the $(-+-)$-bimodal region and the part of the unimodal region that lies strictly below the line ${\rm{Per}}_1(1)$ and also throughout their union.
\end{corollary}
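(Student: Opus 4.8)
The plan is to obtain all three assertions from Theorem \ref{monotonicity 1} (equivalently Theorem \ref{temp1}) together with the point-set topology Lemma \ref{point-set topology}. Write $R$ for the part of the component of degree zero maps lying strictly below ${\rm{Per}}_1(1)$; by Theorem \ref{monotonicity 1} the function $h_\Bbb{R}$ is monotonic on $R$. Denote by $B$ the $(-+-)$-bimodal region and by $U$ the part of the unimodal region below ${\rm{Per}}_1(1)$. Both lie in $R$: every $(-+-)$-bimodal map has three distinct real fixed points by Lemma \ref{attracting fixed point}, so $B$ sits below ${\rm{Per}}_1(1)$ by observation \hyperref[f]{4.f}, while $U\subseteq R$ by definition. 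The set $R$ is then the union of the closures of $B$ and $U$ with two further pieces on which $h_\Bbb{R}$ is constant: the part of the $(+-+)$-bimodal region below ${\rm{Per}}_1(1)$, contained in the $h_\Bbb{R}\equiv\log(2)$ escape component by observation \hyperref[i]{4.i}, and the slivers of the monotone regions, where $h_\Bbb{R}\equiv 0$.

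For the union I would argue directly. If $h_0\in(0,\log(2))$, then the level set $h_\Bbb{R}^{-1}(h_0)\cap R$ meets neither constant piece, so it is already contained in $\overline{B\cup U}$ and is connected by Theorem \ref{monotonicity 1}; hence $h_\Bbb{R}^{-1}(h_0)\cap\overline{B\cup U}$ is connected. The two extreme values are covered by the boundary description established in proving Theorem \ref{monotonicity 1} and in Proposition \ref{escape boundary}: the locus $h_\Bbb{R}=\log(2)$ inside $R$ is the connected arc of the post-critical curve $f^{\circ 2}(c)=f^{\circ 3}(c)$ bounding the lower escape component, while $h_\Bbb{R}\to 0$ only as one approaches the monotone regions or the symmetry locus $\mathcal{S}(\Bbb{R})$ (Example \ref{symmetry locus entropy}), and these form connected boundary arcs of $\overline{B\cup U}$. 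Thus every level set of $h_\Bbb{R}\restriction_{\overline{B\cup U}}$ is connected, which is the asserted monotonicity on the union.

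It remains to separate the two regions, and here I would apply part (1) of Lemma \ref{point-set topology} with the monotonic space $\overline{B\cup U}$ and the closed subspace $A=\overline B$ (and symmetrically $A=\overline U$). The boundary of $\overline B$ relative to $\overline{B\cup U}$ is exactly the segment of the post-critical line $\sigma_1=-6$, given by the relation $f(c_1)=c_2$, that separates the $(-+-)$-bimodal and unimodal regions, so the whole argument reduces to a single point, which I expect to be the main obstacle: \textbf{showing that $h_\Bbb{R}$ restricted to this seam has connected level sets.} My proposal is to establish this through the straightening of Theorem \ref{straightening}. Every map on the seam lies below ${\rm{Per}}_1(1)$ and hence carries a real attracting fixed point; away from the escape locus it straightens to a real quadratic polynomial of equal real entropy, and transporting this correspondence across $\lambda\in(-1,1)$ by Uhre's holomorphic motion (Theorem \ref{Uhre}) should realize the seam as an entropy-preserving image of a subinterval of the real polynomial family $\{z\mapsto z^2+\mu z\}$, on which $h_\Bbb{R}$ is monotonic by Milnor--Thurston \cite{MR970571}. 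Granting monotonicity along the seam, part (1) of Lemma \ref{point-set topology} yields the connectedness of the isentropes in each of $B$ and $U$ and completes the proof; the delicate point to verify is that the seam does not straddle the fold $\mu=1$ of the double cover \eqref{double cover} in a way that would break the connectedness of its level sets.
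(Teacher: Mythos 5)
Your overall strategy is the paper's: everything is funneled through Theorem \ref{monotonicity 1} and Lemma \ref{point-set topology}, with the monotone and $(+-+)$-bimodal slivers below ${\rm{Per}}_1(1)$ dismissed because $h_\Bbb{R}$ is constant there. But there is a concrete geometric error that derails the final step. The common boundary of the $(-+-)$-bimodal region and the unimodal region is \emph{not} the $f(c_1)=c_2$ line $\sigma_1=-6$; that line separates the unimodal region from the $(+-+)$-bimodal and monotone decreasing regions on the left of Figure \ref{fig:main}. The $(-+-)$-bimodal region sits on the other side, to the right of the polynomial line $\sigma_1=2$, and the seam you need is the segment $\left\{\left\langle z^2+c\right\rangle\right\}_{c<0}$ of the polynomial line (this is visible in observation \hyperref[b]{4.b}: the line $\mu-t=2$, $\mu>2$, separating the unimodal and $(-+-)$-bimodal regions of the first quadrant of the $(\mu,t)$-plane, maps onto the part of $\sigma_1=2$ below $\langle z^2\rangle$).

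Once the seam is identified correctly, the step you flag as ``the main obstacle'' evaporates: monotonicity of $h_\Bbb{R}$ along $\left\{\left\langle z^2+c\right\rangle\right\}_{c<0}$ is exactly the classical Milnor--Thurston monotonicity for real quadratic polynomials \cite[Corollary 13.2]{MR970571}, which the paper cites directly. No straightening, no transport by Uhre's holomorphic motion, and no analysis of the fold $\mu=1$ of the double cover \eqref{double cover} is needed at this stage; that machinery belongs to the proof of Theorem \ref{monotonicity 1} itself, which you are already taking as given. A secondary inaccuracy: the isentrope $h_\Bbb{R}=\log(2)$ in the region below ${\rm{Per}}_1(1)$ is not merely the arc of $f^{\circ 2}(c)=f^{\circ 3}(c)$ but the whole closure of the lower escape component, which meets the unimodal, $(-+-)$-bimodal and $(+-+)$-bimodal regions; your direct treatment of the extreme values $h_0\in\{0,\log 2\}$ is therefore shakier than the paper's route, which applies part (1) of Lemma \ref{point-set topology} with $A=\overline{B\cup U}$ inside the region of Theorem \ref{monotonicity 1} (where $h_\Bbb{R}\restriction_{\partial A}$ is locally constant, hence monotonic) and then once more with the polynomial seam as $\partial A$.
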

\begin{proof}
As is apparent from Figure \ref{fig:colored}, the half of the component of degree zero maps in 
$\mathcal{M}_2(\Bbb{R})-\mathcal{S}(\Bbb{R})$
which is strictly below ${\rm{Per}}_1(1)$
consists of certain portions of monotone increasing and  $(+-+)$-bimodal regions 
along with the whole  $(-+-)$-bimodal region and the aforementioned  part of the unimodal region. The latter two are of interest in the corollary  while the entropy is zero or $\log(2)$ over the former two (see observation \hyperref[i]{4.i}). Thinking of the union under consideration here as a closed subspace of the region appearing in Theorem \ref{monotonicity 1}, the first part of  Lemma \ref{point-set topology} finishes the proof. Applying the lemma once more, the regions comprising this union have the common boundary $\left\{\left\langle z^2+c\right\rangle\right\}_{c<0}$ restricted to which $h_\Bbb{R}$ is monotonic by the monotonicity of entropy for quadratic polynomials \cite[Corollary 13.2]{MR970571}. So $h_\Bbb{R}$ is monotonic  on both the $(-+-)$-bimodal region and the lower half of unimodal region.  
\end{proof}

\subsection{Bones in the unimodal region}\label{S6.2}
To understand Conjecture \ref{monotonicity 2} better, in this subsection we study the monotonicity of entropy in the whole unimodal region (whose interior consists of the classes of maps in $\mathcal{F}_2$ \eqref{family-2}) instead of only in the part of it that lies below ${\rm{Per}}_1(1)$, the part which we have already analyzed in Theorem \ref{monotonicity 1} by exploiting the convenient property of the existence of an attracting real fixed point. 

For a map $f$ from the unimodal region of $\mathcal{M}_2(\Bbb{R})$, only one real critical point, say $c_1$ 
($\frac{-2}{\mu+t}$ in \eqref{family-2}), 
contributes to the real entropy because the other critical point $c_2$ ($\frac{2}{\mu-t}$ in \eqref{family-2}) is outside the interval $f(\hat{\Bbb{R}})$. The region is confined between the polynomial line $f(c_2)=c_2$ $\sigma_1=2$ and the line $f(c_1)=c_2$ $\sigma_1=-6$ (that in family \eqref{family-2}  respectively correspond to lines $\mu-t=2$ and $\mu-t=-2$ on the boundary of the domain \eqref{domain-2}). The itinerary of $c_1$ under the unimodal interval map $f\restriction_{f(\hat{\Bbb{R}})}:f(\hat{\Bbb{R}})\rightarrow f(\hat{\Bbb{R}})$ determines its topological entropy and hence $h_\Bbb{R}(f)$. The change in the kneading coordinate of $c_1$ occurs once we hit parameters for which the critical point is periodic. The algebraic curves $f^{\circ n}(c_1)=c_1$ capturing such post-critical relations are known to be  smooth (but possibly disconnected)  subvarieties of $\mathcal{M}_2(\Bbb{C})\cong\Bbb{C}^2$ \cite{MR3145126}; their real loci thus consist of finitely many disjoint one-dimensional closed submanifolds of the plane $\mathcal{M}_2(\Bbb{R})\cong\Bbb{R}^2$.
Each component of such a real locus is either a Jordan curve or a closed subset of the plane diffeomorphic to the real line.  Following the terminology of \cite{MR1351522,MR1736945}, we call the former a \textit{bone-loop} and the latter a \textit{bone-arc}; see Figure \ref{fig:bone}. By a \textit{bone}, we mean a connected component of the real locus of a curve 
$f^{\circ n}(c_1)=c_1$; so a bone is either a bone-loop or a bone-arc. We are mainly interested in such critical orbit relations in the (open) unimodal region of $\mathcal{M}_2(\Bbb{R})$ where there is a distinguished critical point $c_1$ whose preimages are real. The \textit{order type} of the periodic critical point $c_1$ (meaning the ordering of its itinerary) persists along each bone.\footnote{One should think of the order type as a cyclic permutation obtained from writing the cycle containing $c_1$ in the ascending order. The definition of ``bone'' in \cite{MR1351522,MR1736945} is slightly different from ours as in those articles bones are associated to cyclic permutations while here we take them to be the connected components.}\\
\indent
A bone-arc crossing the unimodal region must intersect each of the lines $f(c_2)=c_2$ and $f(c_1)=c_2$ at precisely one point: The bone would be a closed, non-compact, and hence unbounded curve in the plane $\mathcal{M}_2(\Bbb{R})$
whose intersection points with the polynomial line are  PCF quadratic polynomials  with  prescribed kneading data 
that are of course outside the escape locus.  
Such an intersection point   must be unique due to the Thurston rigidity for quadratic polynomials 
\cite[Lemma 13.4]{MR970571}. In fact, a polynomial relation such as $f(c_2)=c_2$ is the only possible critically periodic relation in the escape component; and by Proposition \ref{escape boundary}, we know that the complement in the unimodal region of the real escape components is bounded.  We conclude that after hitting the polynomial line, the bone-arc cannot remain confined in the unimodal region and must hit the other boundary line $f(c_1)=c_2$ as well. In Figure \ref{fig:curves}, we have  detected points $(\mu,t)$ from the domain $\mathbf{U}_2$ \eqref{domain-2} of the family $\mathcal{F}_2$ \eqref{family-2} for which the corresponding map 
$$
f(x):=\frac{2\mu x(tx+2)}{\mu^2x^2+(tx+2)^2}
$$
satisfies $f^{\circ n}(c_1)=c_1$  where $c_1$ is the critical point
$\frac{-2}{\mu+t}$  
and $2\leq n\leq 7$. Observe that  all of the resulting curves are bone-arcs that, in accord with the preceding discussion,  connect a point on the line $\mu-t=2$ to a point on the line $\mu-t=-2$ (keep in mind that by observations \hyperref[b]{4.b} and \hyperref[c]{4.c},  these lines respectively correspond to  certain rays of the polynomial line and the $f(c_1)=c_2$ line in the moduli space). This motivates the conjecture below that mimics the  \textit{connected bone conjecture} for cubic polynomials in \cite{MR1351522,MR1736945}.

\begin{figure}[ht!]
\includegraphics[width=15cm, height=9cm]{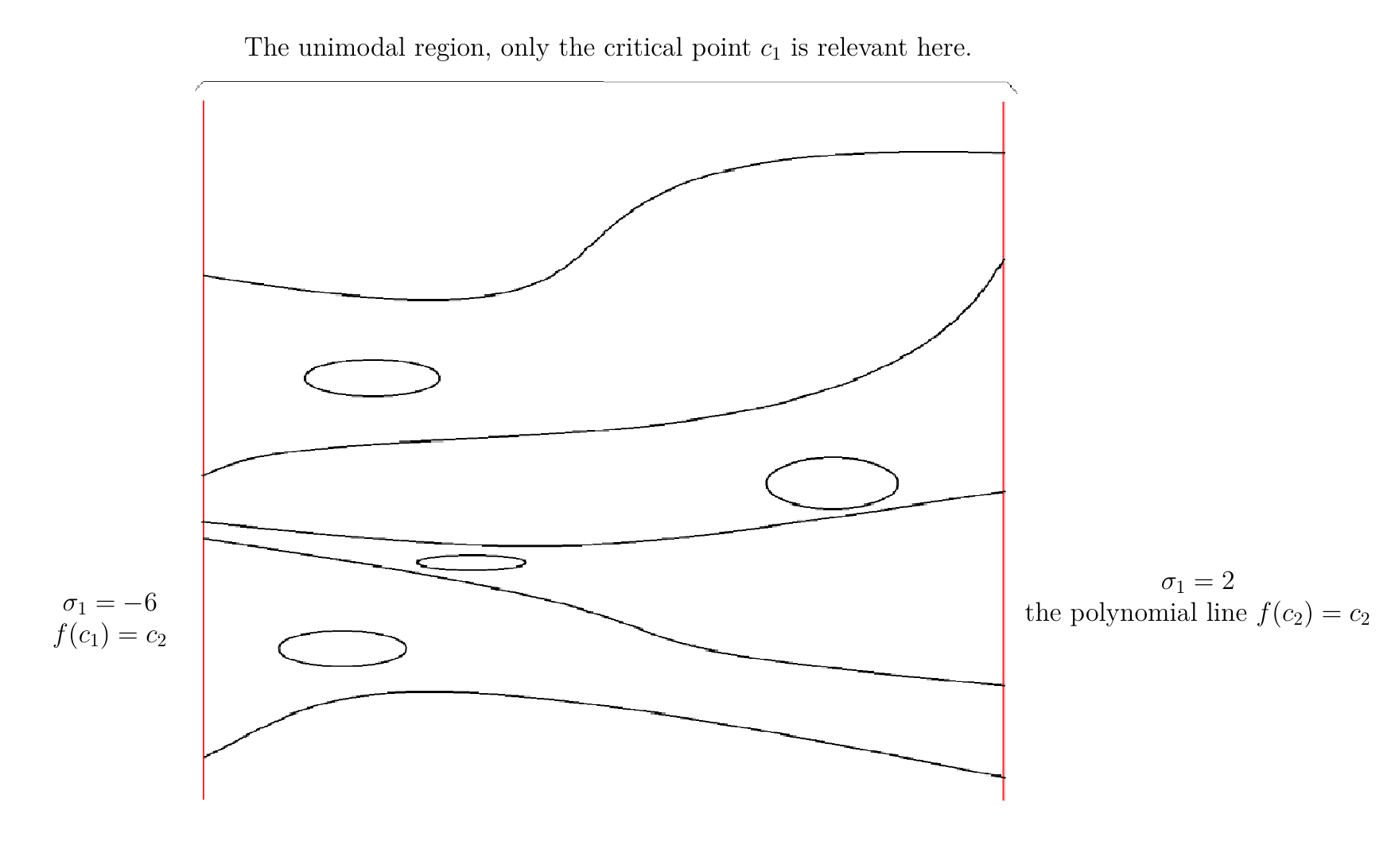}
\caption{A schematic picture of bones (in black)  defined by the post-critical relations $f^{\circ n}(c_1)=c_1$ in the unimodal region of the $(\sigma_1,\sigma_2)$-plane.  They are disjoint one-dimensional submanifolds. There are  bone-arcs connecting PCF quadratic polynomials to PCF quadratic rational maps on the $f(c_1)=c_2$ line. Any other bone in this region should be a Jordan curve, a bone-loop.}
\label{fig:bone}
\end{figure}

\begin{figure}[ht!]
\centering
\includegraphics[width=10cm]{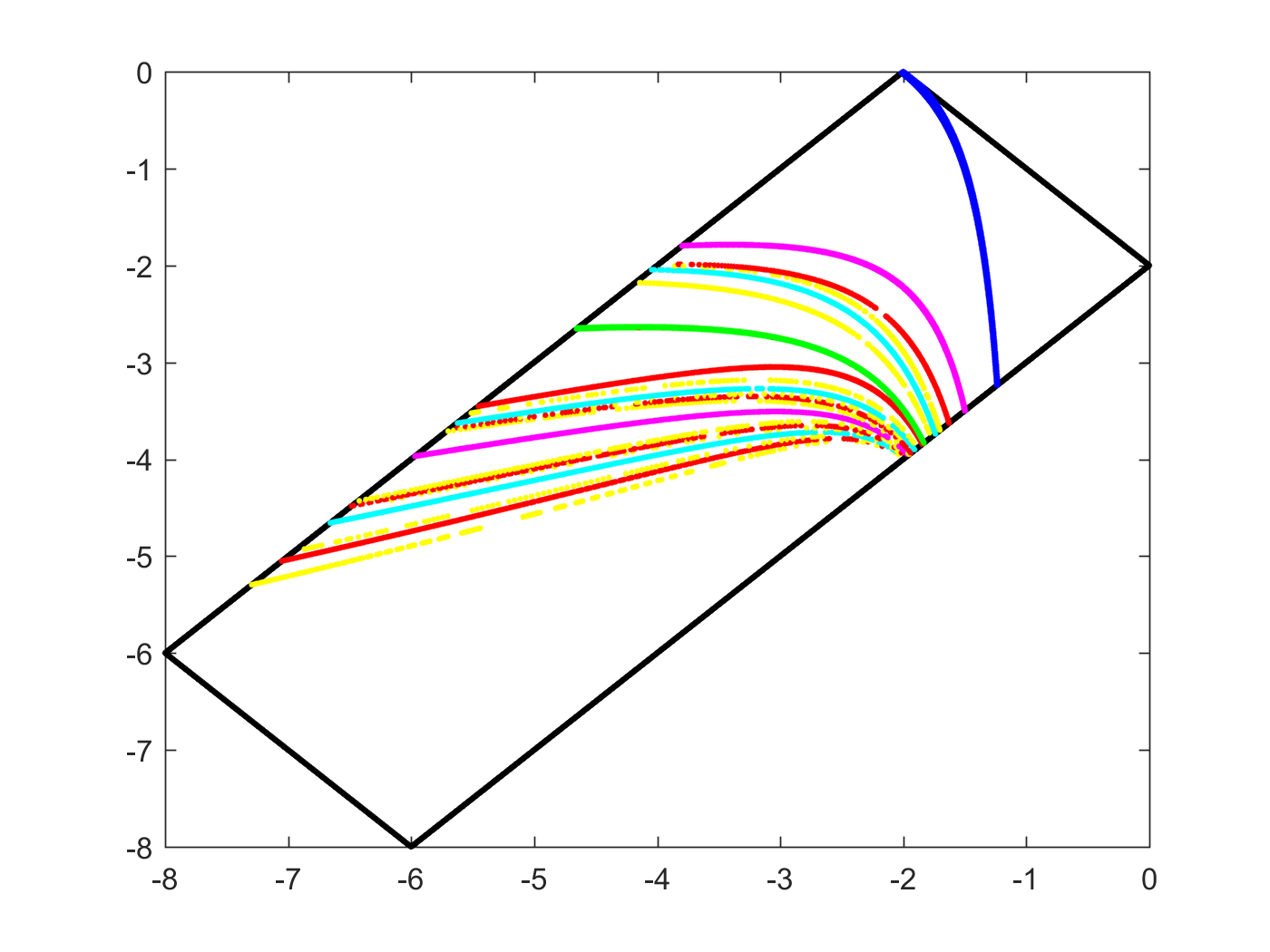}
\caption{In the rectangular region \eqref{domain-2} of the third quadrant of the $(\mu,t)$-plane  that involves all interesting unimodal parameters, we have numerically plotted the curves presenting parameters for which the unique turning point of the unimodal interval map is periodic for the first few periods. The colors indicate the periods:\,\, 
blue:$2$\,\, green:$3$\,\, magenta:$4$\,\, cyan:$5$\,\, red:$6$\,\, yellow:$7$. Compare with  Figure \ref{fig:plot2}.}
\label{fig:curves}
\end{figure}

\begin{namedthm}{No Bone-Loop Conjecture}[]\label{no bone-loop}
There is no bone-loop in the unimodal region.
\end{namedthm}
\noindent
As the kneading data, and hence the real entropy, cannot change without crossing a bone (the argument for 
\cite[Lemma 9.1]{MR1736945} can be applied with no change), there is no surprise that a parallel  conjecture on the absence of bone-loops has been utilized in the proof of monotonicity for cubic polynomials. In our context, as we shall see shortly, a weaker version suffices: 

\begin{namedthm}{Weak No Bone-Loop Conjecture}[]\label{weak no bone-loop}
There is no infinite nested sequence of bone-loops in the unimodal region.
\end{namedthm}
\noindent
It has to be mentioned  that for each critically periodic relation $f^{\circ n}(c_1)=c_1$ the critical point $c_1$ is stable, and hence the bones can be separated by  open neighborhoods. Nonetheless, there is a possibility of a sequence of bone-loops accumulating to a complicated fractal object; the possibility that the preceding conjecture rules out.\\
\indent
For cubic polynomials, the connected bone conjecture on the non-existence of bone-loops was settled in the thesis \cite{MR2695156}. The proof of monotonicity in  \cite{MR1736945} invokes this result while the earlier version \cite{MR1351522} relies on the stronger  \textit{density of hyperbolicity conjecture} along with a kneading theory explanation for why there is no PCF cubic on a   bone-loop. Now that the density of hyperbolicity for real maps has been established in the utmost generality  \cite[Theorem 2]{MR2342693}, it is suggestive to pose a parallel conjecture for real PCF quadratic rational maps.

\begin{remark}\label{density}
Below, we shall need the density of hyperbolicity for real quadratic rational maps with real critical points. This follows from the arguments used in the proof of density of hyperbolicity for real polynomials by Kozlovski, Shen and van Strien in \cite{MR2335796}. In that paper, the density for real polynomials is deduced from the \textit{rigidity}. The argument is based on a result of Shen \cite{MR1992673} that implies a real rational map with real non-degenerate critical points supports no \textit{invariant line filed} on its Julia set. The argument from \cite[\S 2]{MR2335796} carries over and implies the density of hyperbolicity for real quadratic rational maps provided that \textit{quasi-symmetric rigidity} holds for these maps. But quasi-symmetric rigidity is recently established in a very general setting by Clark and van Strien \cite{2018arXiv180509284C}.
\end{remark}

\begin{namedthm}{All PCF's  on Bone-Arcs Conjecture}[]\label{PCF}
Every PCF unimodal map lies on a bone-arc. 
\end{namedthm}

Next, we discuss how these conjectures are related to each other.
\begin{proposition}\label{conjectures}
The following implications hold: 
\small
\begin{equation*}\label{diagram1}
\xymatrix{ \text{no bone-loop conjecture }\ref{no bone-loop}\ar@{<=>}[r] \ar@{=>}[d] 
&\text{all PCF's on bone-arcs conjecture } \ref{PCF} \ar@{=>}[d] \\
\text{weak no bone-loop conjecture }\ref{weak no bone-loop}\ar@{=>}[r] 
&\text{monotonicity in the unimodal region } \ref{monotonicity 2}\ar@{=>}[d]\\
 &\text{monotonicity in unimodal and } (-+-) \text{-bimodal regions } \ref{monotonicity 3}
 }
\end{equation*}
\normalsize
\end{proposition}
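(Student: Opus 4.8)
The plan is to check the five arrows of the diagram, most of which are either formal or reduce to the planar-topology technology of Milnor and Tresser \cite{MR1351522,MR1736945}. The implication from the No Bone-Loop Conjecture to the Weak No Bone-Loop Conjecture is vacuous: the absence of any bone-loop certainly precludes an infinite nested sequence of them. For the bottom arrow, Conjecture \ref{monotonicity 2} $\Rightarrow$ Conjecture \ref{monotonicity 3}, I would combine the assumed monotonicity of $h_\Bbb{R}$ on the whole unimodal region $U$ with the monotonicity of $h_\Bbb{R}$ on the $(-+-)$-bimodal region $B$, which is already part of Corollary \ref{monotonicity 1'}, and then invoke the second part of Lemma \ref{point-set topology}. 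The hypothesis $h_\Bbb{R}(U\cap B)=h_\Bbb{R}(U)\cap h_\Bbb{R}(B)$ of that lemma holds because the common boundary $U\cap B$ is the polynomial segment $\{\langle z^2+c\rangle\}_{c<0}$, along which, by the monotonicity of entropy for real quadratic polynomials \cite{MR970571}, $h_\Bbb{R}$ already sweeps out the entire interval $[0,\log 2]$; hence $h_\Bbb{R}(U\cap B)=[0,\log 2]\supseteq h_\Bbb{R}(U)\cap h_\Bbb{R}(B)$, while the reverse inclusion is automatic.

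For the top equivalence I would use the bridging fact that every bone-loop must pass through a PCF map, together with the established density of hyperbolicity for real one-dimensional maps \cite{MR2342693}. Assuming the No Bone-Loop Conjecture, every bone is a bone-arc; since a PCF unimodal map has its distinguished real critical point $c_1$ periodic, it lies on some curve $f^{\circ n}(c_1)=c_1$, i.e. on a bone, which is then necessarily a bone-arc. In the other direction I would argue the contrapositive: a hypothetical bone-loop $\mathcal{B}$ is a Jordan curve in the interior of the unimodal region along which $c_1$ is periodic of a fixed period and order type, while the \emph{second} critical point $c_2$ varies; running $c_2$ around the loop $\mathcal{B}$ and applying an intermediate-value/degree argument in the spirit of \cite[\S9]{MR1736945} forces a parameter on $\mathcal{B}$ at which $c_2$ also satisfies a critical-orbit relation, producing a PCF map situated on the bone-loop $\mathcal{B}$ rather than on any bone-arc (bones being pairwise disjoint). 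This contradicts the All PCF's on Bone-Arcs Conjecture, establishing the equivalence.

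The crux is the middle arrow, Weak No Bone-Loop Conjecture $\Rightarrow$ Conjecture \ref{monotonicity 2}. First I would record the analog of \cite[Lemma 9.1]{MR1736945}: since the kneading coordinate of $c_1$ can change only when $c_1$ becomes periodic, $h_\Bbb{R}$ is locally constant on the complement of the union of all bones, so every change of entropy is witnessed by crossing a bone. Next I would exploit the laminar structure already in hand: each bone-arc meets the line $f(c_2)=c_2$ and the line $f(c_1)=c_2$ in exactly one point (Thurston rigidity, \cite[Lemma 13.4]{MR970571}), and along the polynomial boundary $h_\Bbb{R}$ is monotone by quadratic-polynomial monotonicity; consequently the bone-arcs are linearly ordered compatibly with entropy and, in the absence of bone-loops, the level sets are manifestly connected — this already yields the right vertical arrow (No Bone-Loop, equivalently All PCF's on Bone-Arcs) $\Rightarrow$ Conjecture \ref{monotonicity 2}. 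To run the argument under only the weak hypothesis, suppose some level set $h_\Bbb{R}^{-1}(h_0)$ is disconnected. Since $h_\Bbb{R}$ is monotone on the polynomial line, at most one component meets that boundary, so there is a compact component $L_0$ lying in the interior. Approaching $L_0$ through the surrounding set where $h_\Bbb{R}\ne h_0$, the entropy changes across infinitely many bones, each of which \emph{encircles} the compact island $L_0$ and therefore cannot be a boundary-to-boundary bone-arc; these encircling bones are thus bone-loops, and they nest down onto $L_0$. This is exactly the infinite nested sequence of bone-loops forbidden by the Weak No Bone-Loop Conjecture, a contradiction, so $h_\Bbb{R}^{-1}(h_0)$ is connected.

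The main obstacle is the island step above: converting a putative disconnection of a level set into a genuinely \emph{infinite} nested family of bone-loops. One must verify that the entropy truly fails to be locally constant as one closes in on the interior island $L_0$ — so that infinitely many bones are crossed — and that each separating bone is forced to be a closed loop around $L_0$ rather than an arc escaping to the boundary; this is where the planar bookkeeping of \cite{MR1351522,MR1736945} must be transplanted to the non-boundary-anchored unimodal maps of the family $\mathcal{F}_2$, whose two boundary lines $f(c_2)=c_2$ and $f(c_1)=c_2$ play asymmetric roles. A secondary difficulty, already flagged in the excerpt, is justifying in the top equivalence that each bone-loop carries a PCF parameter; this is precisely the point at which density of hyperbolicity \cite{MR2342693} enters, in analogy with the cubic case \cite{MR1351522}.
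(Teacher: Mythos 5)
You reproduce the paper's easy arrows faithfully: the left vertical implication is vacuous, and the bottom arrow is obtained exactly as in the paper, by feeding the assumed monotonicity on the unimodal region and the monotonicity on the $(-+-)$-bimodal region from Corollary \ref{monotonicity 1'} into the second part of Lemma \ref{point-set topology}, glued along the polynomial segment $\left\{\left\langle z^2+c\right\rangle\right\}_{c<0}$ on which every value in $[0,\log 2]$ is attained. The two substantive arrows are where your argument departs from the paper's, and in both cases there is a real gap.

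For the implication from Conjecture \ref{PCF} to the no bone-loop conjecture, your main device --- running $c_2$ around a hypothetical bone-loop and extracting a critical-orbit relation for $c_2$ by an intermediate-value or degree argument --- is not the paper's argument, and as sketched it is incomplete: you never identify a function that changes sign or winds along the loop, and you are aiming at the stronger claim that the \emph{given} loop carries a PCF parameter. The paper instead applies the density of hyperbolicity \cite{MR2342693} to find a hyperbolic component $\mathcal{U}$ whose real locus meets the interior of the bone-loop $C$; since $c_2$ has non-real preimages throughout the open unimodal region, $\mathcal{U}$ must be of capture type, its center is a real PCF map (Proposition \ref{classification of components}) lying on a bone $f^{\circ n}(c_1)=c_1$, and the connectedness of $\mathcal{U}_\Bbb{R}$ forces that bone either to meet $C$ (hence to be $C$) or to be trapped inside $C$; either way one produces a PCF map on \emph{some} bone-loop, which already contradicts Conjecture \ref{PCF} because distinct bones are disjoint. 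You correctly name density of hyperbolicity as the essential input, but the capture-component mechanism that converts it into a PCF point on a bone-loop is the missing idea.

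For the middle arrow, your contrapositive ``island'' argument has the gap you yourself flag, and it is not cosmetic. Nothing in your sketch prevents the bones separating the putative compact component $L_0$ from the rest of the level set from being bone-arcs: a bone-arc also disconnects the unimodal strip, so asserting that every separating bone ``encircles $L_0$ and therefore cannot be a bone-arc'' presupposes the conclusion. The paper's proof is organized to avoid manufacturing a nested family out of a disconnection. It first shows that the restriction of any isentrope to the complement of the interiors of all bone-loops is connected: entropy changes only across bones, the bone-arcs run from the line $\sigma_1=2$ to the line $\sigma_1=-6$ and each meets the polynomial line exactly once, so these traces retract onto connected subsets of the polynomial line, and deleting countably many open Jordan domains from the interior of a compact connected planar set preserves connectedness. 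Hence any failure of monotonicity must come from an entropy value attained strictly inside a bone-loop but not on it; inside a bone-loop the only bones are smaller nested bone-loops, so the weak no bone-loop hypothesis forces $h_\Bbb{R}$ to be constant on the interior of every bone-loop by continuity, and the putative island is absorbed into the main component. If you want to keep your contrapositive formulation, you must first prove this ``monotone off the bone-loop interiors'' statement anyway, at which point the nested-sequence extraction becomes the paper's argument read backwards.
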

\begin{proof}
If there is no bone-loop then, by the discussion after Conjecture \ref{no bone-loop}, over the unimodal region the kneading coordinate of the relevant critical point and hence the real entropy can vary only when one crosses the bone-arcs which are simple curves connecting the boundary lines $\sigma_1=2$, $\sigma_1=-6$ of the region; cf. Figure \ref{fig:bone}. Consequently, each isentrope $h_\Bbb{R}=h_0$ in the unimodal region deformation retracts to its intersection with the polynomial line which is connected due to the monotonicity of the entropy of real quadratic polynomials \cite{MR970571}. The weaker claim of Conjecture \ref{weak no bone-loop} implies Conjecture \ref{monotonicity 2} too: If there are bone-loops, the function $h_\Bbb{R}$ would still be monotonic on the complement of their interiors in the unimodal region because, by a simple planar topology argument, given a compact connected subset $A$  of the plane (say the closure of a connected component of the complement  of the unimodal region with respect to bone-arcs) and countably many Jordan curves $C_1,C_2,\dots$ (e.g. bone-loops) whose interior regions $D_i$'s are contained in  ${\rm{int}}(A)$, the compact subset $A-\bigcup_iD_i$ must be connected as well. Consequently, the failure of monotonicity can only be caused by having an entropy value realized  inside a bone-loop but not on it. Inside a bone-loop, the entropy can vary only upon hitting other bones there which are necessarily smaller bone-loops enclosed by the original one. So if there is no infinite nested sequence of bone-loops, the entropy should be constant inside each bone-loop by a simple continuity argument; hence Conjecture \ref{monotonicity 2}.\\
\indent
Invoking the second part of Lemma \ref{point-set topology}, the monotonicity of entropy throughout the whole unimodal region along with the monotonicity of entropy over the $(-+-)$-bimodal region (Corollary \ref{monotonicity 1'}) imply the monotonicity for the restriction of  $h_\Bbb{R}$ to the union of unimodal and $(-+-)$-bimodal regions since these two have the segment $\left\{\left\langle z^2+c\right\rangle\right\}_{c<0}$
in common along which all entropy values in $\left[0,\log(2)\right]$ are attained. \\
\indent
Establishing 
$\text{Conjecture \ref{no bone-loop}}\Leftrightarrow\text{Conjecture \ref{PCF}}$
will finish the proof. The right implication is  trivial, and the left one can be immediately deduced from  the density of hyperbolicity (see Remark \ref{density}): It makes sense to speak of the interior of a bone-loop $C$ because of the Jordan curve theorem. There is a hyperbolic component $\mathcal{U}\subset\mathcal{M}_2(\Bbb{C})$ whose real locus $\mathcal{U}_\Bbb{R}:=\mathcal{U}\cap\mathcal{M}_2(\Bbb{R})$
intersects this interior and it must be of type capture since, with notations as the rest of this subsection, in the open unimodal region  the critical point $c_2$ has imaginary preimages and thus cannot be periodic. Hence, there are positive integers $m,n$ such that for maps in $\mathcal{U}$ the critical point $c_1$ is in the immediate basin of an attracting $n$-cycle while the orbit of $c_2$ lands there after precisely $m$ iterations. This indicates that 
$f^{\circ n}(c_1)=c_1$, $f^{\circ m}(c_2)=c_1$
are the only possible post-critical relations in this component. These equations cut two connected\footnote{Compare with the analogous statement on hyperbolic components in the parameter space of real cubic polynomials employed in the proof of \cite[Lemma 6]{MR1351522}. The real locus of the curve 
 $f^{\circ n}(c_1)=c_1$  intersects the real hyperbolic component $\mathcal{U}_\Bbb{R}$ along a connected subset since,  in the real $\mathcal{J}$-stable component $\mathcal{U}_\Bbb{R}$,
one can  quasi-conformally deform a map from this intersection in a way that such a critical orbit relation  in the Fatou set persists.} curves in $\mathcal{U}_\Bbb{R}$ that intersect  at the unique PCF rational map in $\mathcal{U}$, the center of the component which is real as well by Proposition \ref{classification of components}. The proposition also says that $\mathcal{U}_\Bbb{R}$ is connected; so it should either intersect the Jordan curve $C$ or must be completely contained in its interior. In the former case the center of $\mathcal{U}_\Bbb{R}$ lies on $C$ while in the latter the part of the bone $f^{\circ n}(c_1)=c_1$ that is contained in $\mathcal{U}_\Bbb{R}$ is inside $C$; this bone cannot be a bone-arc as otherwise it must leave the interior of $C$ and this is not possible without hitting $C$, which cannot occur as different bones do not intersect. We conclude that if there is a bone-loop, then there exist bone-loops with PCF maps on them. 
\end{proof}

As mentioned in \S\ref{S1}, the monotonicity of entropy for unimodal real quadratic rational maps is further addressed in \cite{2020arXiv200903797G,2020arXiv200910147B}. We finish the section by pointing out that in our monotonicity results  in both \S\ref{S6.1} and \S\ref{S6.2}, the monotonicity of entropy for quadratic polynomials played a significant role. Conjecture \ref{no bone-loop} implies that the isentropes in the unimodal region 
 are topological rectangles or line segments built on (perhaps degenerate) intervals of constant entropy on the polynomial line $\sigma_1=2$ that expand leftward until they hit the other boundary line $\sigma_1=-6$. For the isentropes in the smaller green unimodal region of Figure \ref{fig:colored}, this has been established in the course of the proof of Theorem \ref{monotonicity 1}.  In contrast, the next section is about an exotic ``non-polynomial-like'' behavior. 

\section{The essentially non-polynomial case}\label{S7}
 
Following the terminology of \cite{MR1806289}, real quadratic maps with three repelling fixed points on the Riemann sphere can be called \textit{essentially non-polynomial}. This is precisely the situation for $(+-+)$-bimodal maps in family $\mathcal{F}_3$ \eqref{family-3} for which, based on Figure \ref{fig:plot4}, Conjecture \ref{temp2} claims that the entropy is not monotonic: The real fixed point of multiplier $\mu<-1$ at the origin  is the only real fixed point (observation \hyperref[f]{4.f}); and the non-real fixed points should of course be repelling as well due to the fact that critical points are real. In this brief section, we try to explore this ``non-polynomial'' attribution and its ramifications to the variation of entropy. For a detailed treatment of Conjecture \ref{temp2}, see \cite{1930-5311_2020_16_225}.

Figure \ref{fig:bifurcation} depicts the bifurcation in the third quadrant of the $(\mu,t)$-parameter space. The coloring scheme is based on the periods of the attracting cycles toward which the critical values $\pm 1$ converge. This is a superimposed diagram:  an ``average'' of different colors has been used wherever the critical points are attracted by cycles of different periods. The fact that inside the parabola -- where the ``interesting'' dynamics takes place -- the colors coincide simply means that type \textbf{D} hyperbolic components  where critical orbits are absorbed by different cycles are too small to be visible here; compare with \cite[p. 66]{MR1246482} and also with \cite{MR1764925} which establishes that the type \textbf{D} hyperbolic components  disjoint from the polynomial line are bounded. 

The ``forking'' of colored channels in Figure \ref{fig:bifurcation} attests to a bifurcation qualitatively different from what the polynomial family goes through. For instance, there is a light blue-green color corresponding to period 5 that splits into three channels before undergoing a period-doubling bifurcation to period 10 (in a magenta color); see Figure 
\ref{fig:bifurcation zoomed}. This suggests that the entropy may go up in various directions and hence, is probably non-monotonic. This is in contrast with the heuristic interpretation of monotonicity of entropy for polynomials that \cite{MR3289915} puts forward: ``Families of real polynomial maps undergo bifurcations in the simplest possible way.''

\begin{figure}[ht!]
\includegraphics[width=14cm, height=7cm]{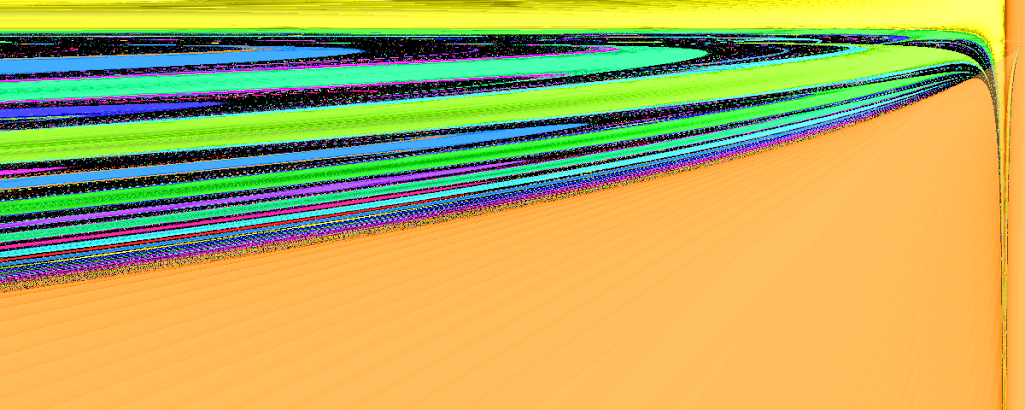}
\caption{The bifurcation diagram for the family 
$\left\{x\mapsto\frac{2\mu x(tx+2)}{\mu^2x^2+(tx+2)^2}:[-1,1]\rightarrow[-1,1]\right\}_{(\mu,t)}$ where 
$-50<\mu<0, -20<t<0$. The colors indicate periods of attracting cycles; orange is $1$, yellow is $2$, etc. The interesting bifurcation behavior for $(+-+)$-bimodal maps takes place inside the parabola $\mu=1-\frac{t^2}{4}$. 
 The $(+-+)$-bimodal parameters $(\mu,t)$ outside the parabola  are colored orange meaning that both critical points converge to an attracting fixed point and hence $(\mu,t)$ lies in the escape locus, as predicted in observation \hyperref[i]{4.i}.}
\label{fig:bifurcation}
\end{figure}

\begin{figure}[ht!]
\includegraphics[width=14cm, height=7cm]{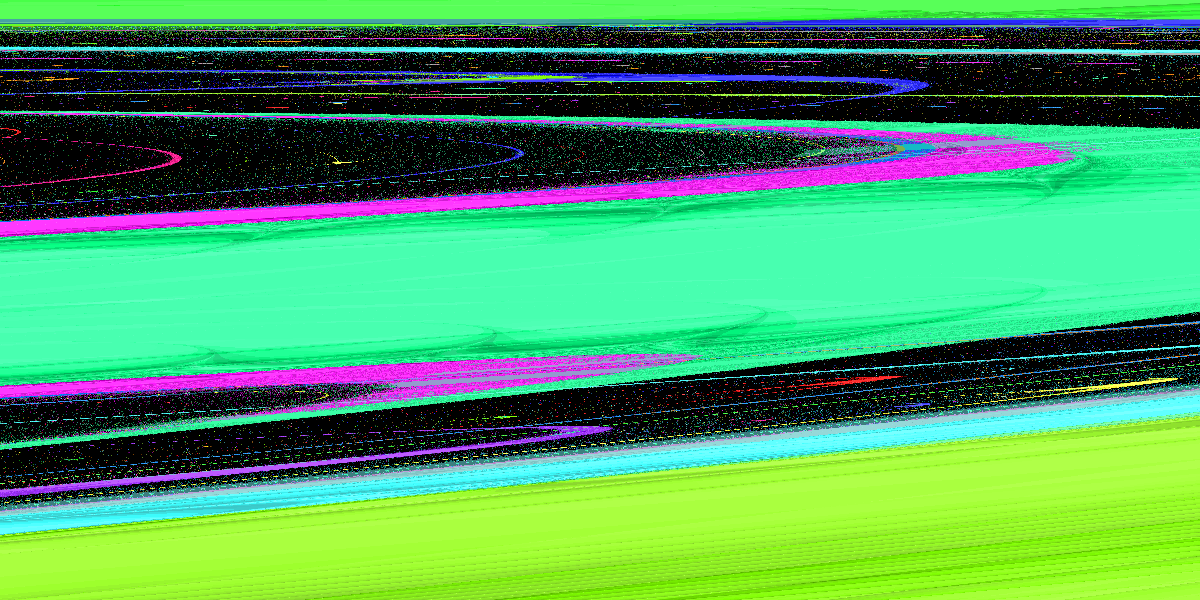}
\caption{Zooming on the part of Figure \ref{fig:bifurcation} defined by $-26<\mu<-19, -5<t<-1$,  period-doubling bifurcations from a $5$-cycle to a $10$-cycle are visible as the transition from green to magenta.}
\label{fig:bifurcation zoomed}
\end{figure}

Finally, we point out that the kneading theory of $(+-+)$-bimodal maps is much more complicated than that of cubic polynomials analyzed in \cite{MR1351522,MR1736945}. There are compatibility relations for itineraries of turning points of a multimodal interval map that include the itineraries of the endpoints \cite[\S7.4]{MR1963683}. Unlike the cubic interval maps in those articles, the interval map $f\restriction_{f(\hat{\Bbb{R}})}:f(\hat{\Bbb{R}})\rightarrow f(\hat{\Bbb{R}})$ 
obtained from the restriction of a real quadratic map $f$ with real critical points is almost never boundary-anchored; quite the contrary, the boundary points are the critical values and their orbits therefore capture all the kneading data. Moreover, a cubic interval map with real critical points can achieve any entropy value between $0$ and $\log(3)$ while for a $(+-+)$-bimodal interval map, although of the same modality, the maximum possible entropy is $\log(2)$. This issue of  ``high modality versus low entropy'' indicates that symbolic sequences realized as the itineraries of critical points of  $(+-+)$-bimodal maps are highly restricted. By contrast, associated with a cubic map is a stunted sawtooth map that comes with the same kneading data, and there is a thorough characterization of symbolic sequences admissible as itineraries under these stunted sawtooth models
\cite[Theorem 5.2]{MR1736945}. This assignment of piecewise-linear models with the same kneading invariants to polynomial interval maps is a crucial part of the treatment of Milnor's conjecture in \cite{MR1351522, MR1736945, MR3264762,MR3999686} which is absent from our context of real quadratic rational maps due to their mysterious combinatorics. As a matter of fact, it is not clear what  piecewise-linear model captures the kneading theory of $(+-+)$-bimodal maps: In the case of degree $d$ polynomials, stunted sawtooth maps are obtained by chopping off a piecewise-linear map of the same shape with $d$ laps of slope $\pm d$ and with the maximum possible entropy $\log(d)$; the entropy which up to affine conjugacy is realized only by Chebyshev polynomials of degree $d$. But in the context of rational maps, there is a continuum of conformally distinct  maps that maximize the real entropy, e.g. maps $z+\frac{1}{z}+t$ that for $t>2$ are $(+-+)$-bimodal and on the boundary of the $h_\Bbb{R}\equiv\log(2)$ escape locus; see Proposition \ref{escape boundary}. Hence it is not clear what is the natural piecewise-linear $(+-+)$-bimodal map of entropy $\log(2)$ that has to be modified in order to construct an appropriate class of stunted maps. 
\begin{question}
Is there a piecewise-linear model that captures the combinatorics of $(+-+)$-bimodal real quadratic rational maps?
\end{question}

\section{Higher degree rational maps with two critical points}\label{S8}
The paper \cite{MR1806289} of Milnor is devoted to the study of rational maps with precisely two critical points. A degree $n$ map with this property must have two distinct critical points at each of which the critical value is attained with multiplicity $n$. The corresponding complex moduli space formed by the Möbius conjugacy classes of degree $n$ \textit{bicritical} rational maps will be denoted by $\mathcal{M}_n^{\rm{bicrit}}(\Bbb{C})$. This space can be regarded as the quotient of the moduli space of the same type of maps with marked critical points by the involution that interchanges the critical points, the description that can be used to identify $\mathcal{M}_n^{\rm{bicrit}}(\Bbb{C})$ with the affine plane $\Bbb{C}^2$
 \cite[\S1]{MR1806289}. It has to be mentioned that, for $n=2$,  this affine structure on $\mathcal{M}_2^{\rm{bicrit}}(\Bbb{C})=\mathcal{M}_2(\Bbb{C})$ differs from the one determined by $\sigma_1,\sigma_2$ with which we have been working so far \cite[Remark 6.3]{MR1246482}. The parity of $n$ comes into play when one considers the locus in $\mathcal{M}_n^{\rm{bicrit}}(\Bbb{C})$ determined by real maps:
\begin{itemize}
\item for $n$ odd, the underlying affine plane $\mathcal{M}_n^{\rm{bicrit}}(\Bbb{R})\cong\Bbb{R}^2$ involves classes that cannot be represented by real bicritical maps (i.e. the \textit{field of moduli} is not a \textit{field of definition}); these are \textit{antipodal} maps that are absent in even degrees due to topological obstructions;
\item for  $n$ odd, the restriction of a bicritical map of degree $n$ with real coefficients to the real circle $\hat{\Bbb{R}}$ is not interesting from the entropy point of view due the fact that the induced self-map of $\hat{\Bbb{R}}$ is an $n$-sheeted covering, and thus of entropy $\log(n)$, if the critical points are not real, whereas when the critical points lie on $\hat{\Bbb{R}}$ they would be inflection points of the induced real map rather than its extrema and the entropy is thus zero;
\end{itemize}
see \cite[\S5]{MR1806289} for details. Consequently, we will take $n$ to be even hereafter. 

Many of the results obtained in previous sections for real quadratic maps can potentially carry over to the case of real bicritical maps of degree $n$. Again, one can simply concentrate on the case where the critical points are real; and then there is a convenient normal form   with critical values $\pm 1$ and a fixed  point of multiplier $\mu$ at the origin:  
\begin{equation}\label{new normal form bicritical}
\left\{x\mapsto\frac{((tx+n)+\mu x)^n-((tx+n)-\mu x)^n}{((tx+n)+\mu x)^n+((tx+n)-\mu x)^n}:[-1,1]\rightarrow [-1,1]\right\}_{\mu\in\Bbb{R}-\{0\}, \mu t\geq 0},
\end{equation}
that generalizes \eqref{new normal form}.
The corresponding auxiliary $(\mu,t)$-parameter space admits a finite-to-one map onto the complement of the covering regions in $\mathcal{M}_n^{\rm{bicrit}}(\Bbb{R})$, and in analogy with Figure \ref{fig:parameter space} can be partitioned to monotone increasing, monotone decreasing, unimodal and finally bimodal regions of shapes $(+-+)$ or $(-+-)$ based on how the critical points $\frac{n}{\pm\mu-t}$ of \eqref{new normal form bicritical} are located with respect to $[-1,1]$. On the other hand, in higher (even) degrees the parabola $\mu=1-\frac{t^2}{4}$ in that picture should be replaced with a higher degree curve in the $(\mu,t)$-plane, and Lemma \ref{attracting fixed point} is not true anymore since there will be more than three fixed points generically. Nevertheless, assuming the claim of the lemma on the existence of an attracting fixed point; that is, in the presence of a ``polynomial-like  behavior''\footnote{To be precise, the paper \cite{MR1806289} uses this terminology only when there is just one attracting fixed point; and calls bicritical maps with more than one attracting fixed point to be in the \textit{principal hyperbolic component} where the Julia set is a quasi-circle. In this component the real entropy is the same as that of the ``center'' $z\mapsto z^n$ of the component which is zero; cf. Theorem \ref{entropy constant over hyperbolic}.}, it is conceivable that a monotonicity result similar to Theorem \ref{monotonicity 1} can be deduced from the monotonicity of entropy for the family $\left\{z^n+c\right\}_{c\in\Bbb{R}}$ of \textit{unicritical} polynomials (which can be established based on the ideas developed in \cite{MR1764936}). With this direction of further inquiry, we conclude the paper. 

\begin{question}
Let $n\geq 2$ be an even integer. Consider the region of the moduli space $\mathcal{M}_n^{\rm{bicrit}}(\Bbb{R})\cong\Bbb{R}^2$ of real bicritical maps of degree $n$ where the critical points are real and there exists a real attracting fixed point. Are the isentropes connected in this region? 
\end{question}

\bibliographystyle{alpha}
\bibliography{bibcomprehensive}

\end{document}